\numberwithin{equation}{section}
\DeclareMathOperator{\vol}{vol}
\newcommand{\lsim}{\lesssim}
\newcommand{\oI}{\overline{I}}
\newcommand{\cD}{{\cal D}}
\newcommand{\cF}{{\cal F}}
\newcommand{\cG}{{\cal G}}
\newcommand{\cI}{{\cal I}}
\newcommand{\cO}{{\cal O}}
\newcommand{\cR}{{\cal R}}
\newcommand{\cT}{{\cal T}}
\newcommand{\cW}{{\cal W}}
\newcommand{\bp}{{\bf p}}
\newcommand{\bq}{{\bf q}}
\newcommand{\bell}{{\boldsymbol{\ell}}}
\newcommand{\bH}{{\bf H}}
\newcommand{\delt}{1}
\newcommand{\R}{\mathbb{R}}
\newcommand{\N}{\mathbb{N}}
\newcommand{\Z}{\mathbb{Z}}
\newcommand{\newsharp}{\downarrow}
\newcommand{\id}{{\rm id}}
\newcommand{\spn}{{\rm span}\,}
\def\endproof{\hfill\vbox{\hrule height0.6pt\hbox{%
   \vrule height1.3ex width0.6pt\hskip0.8ex
   \vrule width0.6pt}\hrule height0.6pt
  }\bigskip  }
\newcommand{\BIGOP}[1]{\mathop{\mathchoice%
{\raise-0.22em\hbox{\huge $#1$}}%
{\raise-0.05em\hbox{\Large $#1$}}{\hbox{\large $#1$}}{#1}}}
\newcommand{\BIGboxplus}{\mathop{\mathchoice%
{\raise-0.35em\hbox{\huge $\boxplus$}}%
{\raise-0.15em\hbox{\Large $\boxplus$}}{\hbox{\large $\boxplus$}}{\boxplus}}}
\newcommand{\bigtimes}{\BIGOP{\times}}
\newcommand{\abs}[1]{\lvert#1\rvert}
\newcommand{\norm}[1]{\ensuremath{\left|\!\left|#1\right|\!\right|}}
\newtheorem{definition}{Definition}[section]
\newtheorem{remark}[definition]{Remark}
\newtheorem{theorem}[definition]{Theorem}
\newtheorem{property}[definition]{Property}
\newtheorem{proposition}[definition]{Proposition}
\newtheorem{lemma}[definition]{Lemma}
\newtheorem{corollary}[definition]{Corollary}
\title{\vspace{-4mm}\LARGE Multilevel Preconditioning of\\ Discontinuous-Galerkin Spectral Element Methods \\ Part I: Geometrically Conforming Meshes}
\author{\large%
Kolja Brix\footnotemark[1], \
Martin Campos Pinto\footnotemark[4]\ \footnotemark[5], \
Claudio Canuto\footnotemark[6] \ and
Wolfgang Dahmen\footnotemark[1]}
\date{\large \today%
\vspace{-10mm}}
\begin{document}

\maketitle

\renewcommand{\thefootnote}{\fnsymbol{footnote}}
\footnotetext[1]{Institut f\"ur Geometrie und Praktische Mathematik, RWTH Aachen, Templergraben 55, 52056 Aachen, Germany, e--mail: \texttt{\{brix,dahmen\}@igpm.rwth-aachen.de}}
\footnotetext[4]{CNRS, UMR 7598, Laboratoire Jacques-Louis Lions, 4 Place Jussieu, 75005 Paris, France}
\footnotetext[5]{UPMC Univ Paris 06, UMR 7598, Laboratoire Jacques-Louis Lions, 4 Place Jussieu, 75005 Paris, France, e--mail: \texttt{campos@ann.jussieu.fr}}
\footnotetext[6]{Dipartimento di Scienze Matematiche, Politecnico di Torino, Corso Duca degli Abruzzi 24, 10129 Torino, Italy, e--mail: \texttt{claudio.canuto@polito.it}.}
\renewcommand{\thefootnote}{\arabic{footnote}}


\thanks{\small
\paragraph{Abstract}
This paper is concerned with the design, analysis and implementation of preconditioning concepts
for spectral Discontinuous Galerkin discretizations of elliptic boundary value problems.
While presently known techniques realize a growth of the condition numbers that is logarithmic in the polynomial degrees when
all degrees are equal and quadratic otherwise,
our main objective is to realize full robustness with respect to arbitrarily large locally varying polynomial degrees degrees,
i.e., under mild grading constraints condition numbers stay uniformly bounded with respect to the mesh size and variable degrees.
The conceptual foundation of the envisaged preconditioners is the auxiliary space method.
The main {conceptual ingredients} that will be shown in this framework to yield ``optimal'' preconditioners in the above sense
are Legendre-Gauss-Lobatto grids in connection with certain associated anisotropic nested dyadic grids as well as specially adapted wavelet preconditioners for the resulting low order auxiliary problems.
Moreover, the preconditioners have a modular form that facilitates somewhat simplified partial realizations. One of the
components can, for instance, be conveniently
combined with domain decomposition,
at the expense though of a {logarithmic growth of condition numbers}.
Our analysis is complemented by quantitative experimental studies of the main components.

\paragraph{AMS subject classification}
65N35, 
65N55, 
65N30, 
65N22, 
65F10, 
65F08  

\paragraph{Keywords}
Discontinuous Galerkin discretization for elliptic problems, interior penalty method, variable polynomial degrees, auxiliary space method, Legendre-Gauss-Lobatto grids, associated dyadic grids.
}


\renewcommand{\thefootnote}{}

\footnotetext{This work was supported in part by the European Commission Improving Human Potential Programme within project ``Breaking Complexity'' (HPRN-CT-2002-00286), by the RWTH Aachen Seed Funds and Distinguished Professorship projects, as well as by the Graduate School AICES funded by the Excellence Initiative of the German federal and state governments, and by DFG project ``Optimal preconditioners of spectral Discontinuous Galerkin methods for elliptic boundary value problems'' (DA 117/23-1).}

\renewcommand{\thefootnote}{\arabic{footnote}}


\section{Introduction} \label{sec:intro}

Attractive features of {Discontinuous} Galerkin (DG) discretizations are on the one hand their versatility regarding
a variety of different problem types,
and on the other hand their flexibility regarding local mesh refinement
and even {locally variable polynomial order} of the discretization.
While initially the main focus has been on transport problems like hyperbolic conservation laws,
an increased attention has recently been paid to diffusion problems,
which naturally enter the picture in more complex applications like incompressible Navier-Stokes equations.
Due to the possible occurrence of both singularities
and regions of high regularity on the other hand,
the use of variable and possibly arbitrarily high degrees is particularly attractive, see, e.g., \cite{SSH,WFS};
see also \cite{CaHuQuZa07}.
The central theme of this paper is to develop efficient solvers for the systems of equations resulting from
{\em Discontinuous Galerkin Spectral Element} (DG-SE) discretizations.
By this we mean that
nodal-based polynomials of {\em arbitrarily high locally variable} polynomial degree
on meshes with arbitrarily small mesh size are permitted.
In this work we confine the discussion to {\em geometrically conforming} meshes, i.e., the intersection of any
two mesh elements is empty or a common facet so that hanging nodes are not permitted.

To formulate our objectives in more specific terms and also to indicate some intrinsic obstructions we briefly recall the state of the art
regarding preconditioners for the DG method applied to {\em second order elliptic boundary value problems}.

The first group of results refers to {\em uniformly bounded} polynomial degrees.
The multigrid scheme proposed in \cite{kanschat}
gives rise to uniformly bounded
condition numbers
provided that
(i) the underlying hierarchy of meshes is \emph{quasi-uniform} and
(ii) the solution exhibits a certain (weak) regularity.
This scheme has been extended in \cite{Kanschat2004} to locally refined meshes showing a similar performance
without a theoretical underpinning though. Domain decomposition preconditioners investigated in \cite{AA1,AA2}
give rise to only logarithmically growing condition numbers when the mesh size decreases. A two-level scheme in the sense
of the {\emph{auxiliary space method}} (see, e.g., \cite{Brenner1996a,Oswald1996,Xu1996}) is proposed
in \cite{DLVZ2006} and shown to exhibit mesh-independent convergence again
on quasi-uniform {geometrically} conforming meshes with a \emph{fixed} uniformly equal polynomial degree.
In the framework of the auxiliary space method,
preconditioners providing uniformly bounded condition numbers for locally refined meshes have been developed in \cite{BCD,BCDM}
under weak grading constraints and for variable but uniformly bounded polynomial degrees.
In all those results the condition numbers {\em depend on the bound} for the polynomial degrees.

The following second group of results {\em concerns the quantitative dependence} of condition numbers on the polynomial degree
aiming at the use of polynomial elements of {\em arbitrarily high order}.
These results draw primarily on domain decomposition
concepts, see, e.g., \cite{TW2005}. More precisely, two essentially distinct cases arise, namely

(a) all polynomial degrees are equal,

(b) the polynomial degrees may vary from element to element.\\[1mm]
In the case (a) the condition numbers can be shown to exhibit only a logarithmic growth in the {polynomial} degree $p$ (see \cite{TW2005}).
This may be perceived as quite satisfactory for practical purposes if one accepts unnecessarily large {polynomial} degrees even near singularities. In the case (b), however,
when arbitrarily high polynomial degrees are to be used only in part of the domain, the best known bounds to us grow like $p^2$ (see \cite{SSH}) which does call for improvements.

In summary, (1) none of the currently known preconditioners gives rise to uniformly bounded condition numbers {\em independent of the polynomial degrees}, (2)
a {\em strong} growth of condition numbers may occur
when non-uniform degree distributions are used, i.e., when
the quotient of the largest and lowest degree is allowed to be unbounded.

To see why there is an essential difference between (1) and (2) it is instructive to consider first the extreme case of a
{\em spectral} trial space spanned by a high-order polynomial on a single quadrilateral element.
All strategies for this case known to us can
be interpreted as employing
a low-order \emph{auxiliary} space to precondition the system for the high-order discretization. To distinguish
the two types of spaces we sometimes refer to the original elements in the high-order finite element mesh, comprised in the extreme
case under consideration of a single element, as
{\em macro-elements} while the grid inside each macro-element can be viewed as a {\em subgrid}.
In \cite{BeuSchneiSchwab}, at least for
the Laplace operator, piecewise linear finite elements on a Cartesian equidistant subgrid in conjunction with wavelet bases for
weighted spaces give rise to
precondition numbers with logarithmic growth in the polynomial degree. An alternative, perhaps more versatile (with respect to problem specification) approach put forward in \cite{DM1985} and theoretically supported by \cite{Ca94,PaRo95}, is to our knowledge the only way to
obtain uniformly condition numbers.
It uses a very special low-order space based on so called \emph{Legendre-Gauss-Lobatto} (LGL) grids associated with the high-order space.
It is known that using the inverse of the low-order discretization on the LGL {\em subgrid} as a preconditioner for the high-order trial space
on the single element
gives rise to \emph{uniformly bounded} condition numbers \cite{CaHuQuZa06}.

When the finite element mesh is comprised of more than a single element it is shown in \cite{TW2005} how to still use this concept in conjunction
with {\em domain decomposition} in a near optimal way, provided that the LGL subgrids on adjacent elements
match at element interfaces, which means that all polynomial degrees are {\em equal}. The growth of the condition numbers can then be kept logarithmic while, however, such bounds no longer exist when the polynomial degrees vary locally in the above strong sense.
In fact, apparently
the {\em non-nestedness} of LGL grids is then the essential obstruction to contriving {\em optimal} preconditioners in the sense that:

(P1) the condition numbers remain {\em uniformly bounded} independent of the mesh size and locally variable arbitrarily high degrees;

(P2) the preconditioner can be applied at a computational cost that stays {\em proportional} to the problem size.\\[1mm]
Here are a few indications why non-nestedness causes serious problems when the degrees vary locally.
First, the jump terms
at element interfaces corresponding to the high-order trial functions
are {\em not} equivalent to those for the corresponding auxiliary low-order spaces which are conforming only inside
each macro-element. In fact, in the high-order case a jump between
two adjacent polynomial elements of formally different polynomial degrees may still vanish due to matching traces, while the jumps for corresponding
low-order finite element traces would not vanish because the nodes of the LGL subgrids interlace at the marco-element interfaces.
Second, the non-nestedness of LGL grids implies that, when the polynomial degrees on adjacent macro-elements disagree,
the corresponding subgrids have only a trivial intersection at such an interface.
Therefore, one fails to find sufficiently rich globally conforming auxiliary low-order spaces.
As a consequence, one faces serious difficulties in verifying the relevant auxiliary space conditions.
Third, the non-nestedness of LGL grids not only affects (P1) but also the application complexity (P2).
In fact, when employing locally large polynomial degrees,
the iterative solution of an auxiliary problem, even on only a single macro-element,
becomes problematic because one cannot resort to efficient multilevel techniques and hence (P2) is not clear.

In summary, it seems that the currently know concepts are not sufficient to provide optimal preconditioners for
DG-SE discretizations. The central objective of this paper is to construct optimal preconditioners for DG-SE discretizations in the sense of (P1), (P2).
Specifically, to overcome the obstructions outlined above, we introduce the following new conceptual ingredients.
The first one is the construction of certain
\emph{dyadic} grids that are associated in a strong sense with the LGL grids but are in addition \emph{nested}. This association
manifests itself through a number of stability estimates based on suitable comparison criteria for different grids.
These findings allow us eventually
to ensure (P1). Concerning (P2), the auxiliary grid hierarchies allow one, in principle, to employ multilevel techniques to
solve the resulting low order problems.
However, the strong anisotropies of the dyadic meshes, which are inherited from the LGL grids, appear to
prevent standard techniques like BPX-preconditioners from working well. Therefore, as a second ingredient, we propose
a specially tailored
\emph{wavelet preconditioner}
using suitable piecewise polynomial $L_2$-orthogonal multi-wavelets.

As mentioned above, we choose the \emph{auxiliary space method} as a conceptual platform for putting these tools to work, see \cite{Oswald1996,Xu1992,Xu1996}.
The key issue is to construct a
conforming auxiliary space comprised of globally continuous piecewise
multi-linear functions on dyadic subgrids
of the original macro mesh. It turns out though that the identification of suitable ingredients and their analysis is facilitated best by
realizing the ``final'' auxiliary space through several {\em stages}. That is the auxiliary space method is {\em iterated}.
This offers in our opinion at least two major benefits. First, it turns out that the identification of a ``proper smoother'' is not obvious
in the ``one-step-mode'' while it is naturally obtained as a result of concatenating the intermediate stages. Second,
the intermediate stages can be used as ``stand-alone'' results in several contexts. For instance, in the first stage
we use a high-order conforming subspace as an auxiliary space (to get rid of the above mentioned ``jump-problem'' arising from non-matching LGL subgrids).
The corresponding result can be used directly as an essential tool for a domain decomposition
preconditioner for the DG-SE discretization, see \cite{CPP}, accepting a logarithmic growth of condition numbers, but now for
arbitrary locally variable polynomial degrees. Also, associated dyadic grid concepts as well as the associated wavelet preconditioner
can be used for high-order {\em conforming} Galerkin discretizations or for a pure spectral discretization on a single element
offering a performance that does not seem to be available yet for these scenarios either.

The paper is organized as follows. In Section~\ref{sect2.1} we formulate a simple model problem
and describe the main ingredients of the DG-SE discretization. In Section~\ref{sec:asm}, following essentially \cite{Oswald1996},
we briefly recall the \emph{auxiliary space} concept in a way that is most conveniently applied
in the present context. In Section~\ref{sec:sdgtoscg} we explain first, as indicated above, why we essentially split the construction of a suitable auxiliary space into several stages, and formulate a result for the first ``intermediate'' stage, see Theorem~\ref{th:stageI}. Section~\ref{sec:conf} is devoted to the second and main stage
of constructing globally conforming low order finite element spaces on certain {\em strongly associated} dyadic grids.
As stated in Theorem~\ref{th:2}, even uniformly bounded condition numbers
(avoiding the logarithmic growth of the domain decomposition approach) can be obtained, once a corresponding
preconditioner for the conforming low-order problem is available. Due to the fact that the partitions for the low-order auxiliary spaces
involve highly anisotropic cells, this is not completely obvious and standard BPX-type techniques do not work well enough.
In Section~\ref{sec:multiwavelet} we develop, so to speak as a third stage, a \emph{change-of-bases} preconditioner based on specially tailored
multi-wavelets that does give rise to uniformly bounded condition numbers, see Theorem~\ref{thm:tildeA2-prec}.
In Section~\ref{sec:comp} we present the resulting ``composite preconditioner'', see Theorem~\ref{thm:combined}.
Each stage is concluded by some numerical experiments
quantifying its performance.

Sections~\ref{sec:proof-I} and~\ref{sec:proof-II} are then devoted to the proofs of Theorems~\ref{th:stageI} and~\ref{th:2}. These are necessarily
rather technical but we have tried to organize them in a way that brings out the main mechanisms.
From a bird's view one could say that the robust treatment of varying polynomial degrees hinges on two main ingredients, namely
first the fact that certain interpolation operators provide uniformly stable $L_2$- and $H^1$-isomorphisms between
high-order polynomial spaces and low-order finite element spaces on LGL partitions, and second,
a proper notion of \emph{uniformly equivalent} grids that allows one to deal with different polynomial degrees and to switch
to the associated dyadic grids which, in turn, allows one to take advantage of nestedness.

Throughout the paper we shall employ the following notational convention. By $a \lsim b$
we mean that the quantity $a$ can be bounded by a constant multiple of $b$ uniformly in the parameters
$a$ and $b$ may depend on. Likewise $a\simeq b$ means $a\lsim b$ and $b\lsim a$.
For two vectors $\bp, \bq \in \R^n$ an inequality $\bq \leq \bp$ is to be understood {\em componentwise},
i.e., $q_k \leq p_k$ for $1 \le k \le n$.

\section{Model problem and discretizations}\label{sect2.1}

The methods developed in the sequel apply to second order {symmetric} elliptic boundary value problems with
variable coefficients.
To keep the technical level of the exposition as low as possible we confine the discussion to the model problem
\begin{equation}
\label{eq:setting.1}
-\Delta u = f \quad \mbox{in }\Omega \;, \qquad u=0 \quad \mbox{on }\partial\Omega \;,
\end{equation}
where $\Omega \subset \mathbb{R}^d$, is a bounded Lipschitz domain with piecewise
smooth boundary and $f \in L_2(\Omega)$. {Here one should think of $d\in \{1,2,3\}$.}
Such domains can be partitioned into images of (hyper-)rectangles
through smooth mappings (such as iso/sub-parametric mappings, or Gordon-Hall transforms).
Again, for the sake of technical simplicity, it suffices to treat unions of {closed} (hyper-)rectangles, i.e.,
we assume that the closure of $\Omega$ can be
partitioned into the union of an essentially disjoint finite collection $\cR$ of {closed} (hyper-)rectangles $R$.
{These (hyper-)rectangles are often simply referred to as \emph{elements}.}
Moreover, we confine the discussion in this paper to
{\em geometrically conforming partitions $\cR$}, which means that any
nonempty intersection between two elements $R$ and $R'$ in $\cR$ is an $l$-facet $F$ for both of them,
for some $0 \leq l \leq d-1$.
It will be convenient to introduce the complex
\begin{equation}
\label{facetcomplex}
\cF_l := \bigcup_{R\in \cR} \cF_l(R)
\end{equation}
of all $l$-dimensional {closed facets} associated with the macro-mesh consisting of the elements $R\in \cR$.
{We call a $(d-1)$-dimensional facet $F \in \cF_{d-1}(R)$ a face or an interface.}
Moreover, for each $F \in \cF_l$, we define $\cR(F):=\{R'\in \cR : R' \cap F\neq \emptyset\}$.

To describe the eligible discretizations, $\bH=(H_1,\ldots,H_d)=\bH(R)\in \R_+^d$ denotes the vector of the $k$-th side lengths of
$R$, $k=1,\ldots,d$. Likewise $\bp=(p_1,\ldots,p_d)=\bp(R)\in \N^d$ denotes the vector of coordinate-wise polynomial degrees of an element
of $\mathbb{Q}_\bp(R):= \bigotimes_{k=1}^d \mathbb{P}_{p_k}(I_k)$
It will be convenient to denote by
$\bH:= \sum_{R\in\cR}\bH(R)\chi_R$, $\bp = \sum_{R\in \cR}\bp(R)\chi_R$ the corresponding (global) meshsize- and
degree-functions and we use $\delta := (\bH,\bp)$ as the corresponding discretization parameter.

The trial spaces for the Discontinuous Galerkin Spectral Element (DG-SE) method are then of the form
\begin{equation}
\label{eq:setting.2}
V_\delta = \{v \in L_2(\Omega) \ : \ v_R:= v\mid_R \in \mathbb{Q}_\bp(R),\,\,\forall\,\, R\in \cR
 \}\;,
\end{equation}
while $V_\delta^c:= V_\delta \cap H^1_0(\Omega)$ denotes the largest conforming subspace of $V_\delta$ accommodating
the prescribed boundary conditions.
$\bH, \bp$ will always be assumed to satisfy the {\em grading conditions}
\begin{equation}
\label{eq:setting.50}
\frac{\max_k H_k(R)}{\min_k H_k(R)} \lsim 1, \qquad \frac{\max_k p_k(R)}{\min_k p_k(R)} \lsim 1 , \qquad
\max_\pm \max_k \frac{p_k(R^\pm)}{p_k(R^\mp)} \lsim 1,\qquad R\in \cR,
\end{equation}
uniformly in $\cR$, where $R^+, R^-$ stand for two adjacent elements sharing an interface.

We impose one further assumption on $\bp$ which is needed only when $d\geq 3$.
For $2 \leq l \leq d-1$ and for each $F \in \cF_l$, there exists
$R \in \cR(F)$ such that
\begin{equation}
\label{ass:locdeg}
\bp(F,R) \leq \bp(F,R') \qquad \forall R' \in \cR(F) \;,
\end{equation}
where $\bp(F,R):= \bp(R)\mid_{F}$.
Note that the property is trivially true for $l=1$ since $\bp(F,R) \in \mathbb{N}$, and for $l=d$ since $\cR(F)=\{F\}$.

Denoting by
${\bf n}_{R^\pm,F}=-{\bf n}_{R^\mp,F}$ the unit normal vectors on $F=R^+\cap R^-$ pointing to the
exterior of $R^\pm$, and
for $F \in \cF_{d-1}(R)$
we define as usual the jumps $[v]_F$ and averages $\{v\}_F$
as follows:
taking the homogeneous Dirichlet boundary condition into account,
for $F\subset \partial\Omega$ we set
$[v]_F = {\bf n}_{R,F} \, v\mid_{F}$
and
$\{v\}_F = v_{|F}$, while
for $F\subset \Omega$ we define
\begin{equation*}
[v]_F = {\bf n}_{R^-,F} \, v^{-}_{|F} + {\bf n}_{R^+,F} \, v^{+}_{|F} ,\qquad \{v\}_F = \frac12 \left(v^{-}_{|F} + v^{+}_{|F}\right) .
\end{equation*}

The {\em Symmetric Interior-Penalty Discontinuous Galerkin
Spectral-Element discretization} of Problem \eqref{eq:setting.1} is defined as follows (see \cite{Ar82,ArBrCoMa02}):
find $u \in V_\delta$ such that
\begin{equation}
\label{eq:conf.1}
{a}_\delta(u,v)=(f,v)_{0,\Omega}
\quad \forall v \in V_\delta \;,
\end{equation}
where the bilinear form $ {a}_\delta(\cdot,\cdot) : V_\delta \times V_\delta \to \R$ is given by
\begin{equation}
\label{eq:conf.2}
{a}_\delta(u,v)=\sum_{R \in \cR} (\nabla u,\nabla v)_{0,R} +
\sum_{F \in \cF_{d-1}} \big(- (\{\nabla u\},[v])_{0,F} - (\{\nabla v\},[u])_{0,F}
+\gamma \omega_F ([u],[v])_{0,F} \big) \;.
\end{equation}
As usual, we denote by $(\cdot,\cdot)_{0,G}$ the standard $L_2$-inner product over the domain $G$ and set $\|v\|_{0,G}:= (v,v)_{0,G}^{1/2}$.
The weights $\omega_F$ are defined as follows. When $F$ is orthogonal
to the $k$-th coordinate direction and $\cR(F)=\{R^\pm\}$,
we set
\begin{equation}
\label{eq:conf.3}
\omega_F= \max \left(\frac{(p_k(R^-)+1)^2}{H_k(R^-)},
\frac{(p_k(R^+)+1)^2}{H_k(R^+)}\right) \; ,
\end{equation}
with the obvious modification when $F\subset \partial\Omega$.
The definition is motivated by the inverse trace inequality
$\Vert {v} \Vert_{0,F} \leq \frac{p_k(R)+1}{\sqrt{H_k(R)}}\,
\Vert {v} \Vert_{0,R}$ for all $v \in \mathbb{Q}_\bp(R)$, $R=R^\pm$, which allows one to prove
the uniform coercivity and continuity of $a_\delta(\cdot,\cdot)$ provided the constant $\gamma>0$ is properly chosen.
Indeed, the following result holds (see, e.g.,\cite{SSH}).

\begin{proposition}
\label{prop:conf.1}
There exists a constant $\gamma_0>0$ such that for all $\gamma > \gamma_0$ the bilinear form
$a_\delta$, defined in \eqref{eq:conf.2}, satisfies
\begin{equation}\label{eq:conf.6}
a_\delta(v,v) \simeq \Vert v \Vert_{DG,\delta}^2 := \sum_{R \in \cR} \Vert \nabla v \Vert_{0,R}^2
+\gamma \sum_{F \in \cF_{d-1}} \omega_F \Vert \, [v] \, \Vert_{0,F}^2
\qquad
\forall v \in V_\delta \;.
\end{equation}
The constant $\gamma_0$ and the constants implied by the symbol $\simeq$ can be chosen independently
of $\delta =(\bH,\bp)$.
\end{proposition}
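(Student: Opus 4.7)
The plan is to establish both inequalities implied by $\simeq$ in \eqref{eq:conf.6} by a standard SIPG argument, where the critical point is to control the interface terms involving $\{\nabla v\}$ by the broken $H^1$ norm using Property~\ref{prop:boundaru-bound}, and then to absorb the resulting constants via the specific choice of the penalty weights $\omega_F$ in \eqref{eq:conf.3}--\eqref{eq:conf.4}.

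First I would prove the continuity-type bound for the crossed term. For an interior face $F\in\cF_{d-1}$ shared by $R^-$ and $R^+$ and orthogonal to the $k$-th coordinate, I apply Property~\ref{prop:boundaru-bound} componentwise to $\nabla v^\pm$, noting that each partial derivative of $v^\pm$ still lies in a space of the form $\mathbb{Q}_{p'}(R^\pm)$ with $p'_k\le p_k(R^\pm)$. This gives
\begin{equation*}
\|\nabla v^\pm\|_{0,F}^2 \;\leq\; \frac{(p_k(R^\pm)+1)^2}{H_k(R^\pm)}\, \|\nabla v^\pm\|_{0,R^\pm}^2 \;\leq\; \omega_F\, \|\nabla v^\pm\|_{0,R^\pm}^2,
\end{equation*}
where the definition of $\omega_F$ as a maximum is what allows the single weight to dominate contributions from both sides. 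By the definition of the average this yields $\|\{\nabla v\}\|_{0,F}^2 \lsim \omega_F(\|\nabla v^-\|_{0,R^-}^2+\|\nabla v^+\|_{0,R^+}^2)$, with the boundary case handled analogously using \eqref{eq:conf.4}.

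Next, for coercivity I expand
\begin{equation*}
a^*_\delta(v,v) \;=\; \sum_{R\in\cR}\|\nabla v\|_{0,R}^2 - 2\sum_{F\in\cF_{d-1}}(\{\nabla v\},[v])_{0,F} + \gamma\sum_{F\in\cF_{d-1}}\omega_F\|[v]\|_{0,F}^2,
\end{equation*}
and estimate each crossed term by Cauchy--Schwarz followed by a Young inequality with a small parameter $\eta>0$:
\begin{equation*}
2\bigl|(\{\nabla v\},[v])_{0,F}\bigr| \;\leq\; \eta\,\omega_F^{-1}\|\{\nabla v\}\|_{0,F}^2 + \eta^{-1}\,\omega_F\|[v]\|_{0,F}^2.
\end{equation*}
Inserting the trace bound above, summing over $F$, and using the finite overlap of the face-to-element incidence (each element $R$ contributes to at most $2d$ faces), the first term is controlled by $C\eta\sum_R\|\nabla v\|_{0,R}^2$ with $C$ depending only on $d$. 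Choosing $\eta$ small and then $\gamma_0$ large enough to absorb $\eta^{-1}$ yields $a^*_\delta(v,v)\gsim \|v\|_{DG,\delta}^2$ for all $\gamma>\gamma_0$. The reverse inequality $a^*_\delta(v,v)\lsim\|v\|_{DG,\delta}^2$ follows from the same Cauchy--Schwarz and trace estimates without requiring any smallness of $\eta$.

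The main obstacle, and what makes this proof more than a formality for variable $p$ and $H$, is precisely the asymmetric character of $\omega_F$: the weight must simultaneously dominate the trace constants from both neighboring elements, and Property~\ref{prop:boundaru-bound} is applied independently on each side. The grading assumption \eqref{eq:setting.50} is not strictly needed at this stage (the max-definition already absorbs the discrepancy on each individual face), but it ensures that constants remain uniform when one later needs equivalence between $\omega_F$-weighted quantities evaluated from either side. Everything else is routine, and the uniformity of the implied constants in $\delta$ is inherited directly from the $\delta$-independence of \eqref{eq:basic.4quater}.
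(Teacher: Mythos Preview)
Your argument is correct and follows essentially the same route as the paper: the paper likewise isolates the crossed interface term, applies Cauchy--Schwarz and a weighted Young inequality, and then invokes the trace inequality \eqref{eq:basic.4quater} to obtain $\Vert\{\nabla v\}\Vert_{0,F}^2 \leq \omega_F \sum_\pm \Vert \partial_k v^\pm \Vert_{0,R^\pm}^2$, after which it declares the rest standard. The only cosmetic difference is that the paper exploits that $[v]$ is purely normal to reduce $\{\nabla v\}$ to the single normal derivative $\partial_k v$ before applying the trace bound, whereas you bound the full gradient trace componentwise; both yield the same $\omega_F$-weighted estimate with $\delta$-independent constants.
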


The central objective of this paper is to develop and analyze preconditioners for the linear systems arising from
\eqref{eq:conf.1}. Their concrete structure depends on the bases for the spaces $V_\delta$.
Nodal basis functions for certain specific subgrids on the elements $R$ will be seen to have particularly favorable properties,
among them uniformly equivalent but computationally more efficient quadrature formulations as discussed
in \cite{brix-thesis,BCD2013}.

\section{The auxiliary space method}\label{sec:asm}

The so called \emph{auxiliary space method} {(ASM)} will serve as the conceptual platform for developing preconditioners for the linear system
\eqref{eq:conf.1},
(see, e.g.,\cite{Brenner1996a,Xu1992,Xu1996,BCDM}). Specifically, we adopt
the abstract
framework, see \cite{Oswald1996,Nepomnyaschikh1990,Nepomnyaschikh1992} because the specific conditions
formulated there are best suited for the present application. In particular, the envisaged auxiliary spaces
are {\em not} contained in the DG trial spaces $V_\delta$.
For convenience of the reader we briefly recall the relevant ingredients in appropriate generality.

For a given finite-dimensional Hilbert space $V$ and a symmetric positive-definite bilinear form
$a \, : \, V \times V \to \mathbb{R}$ we seek
an auxiliary finite-dimensional Hilbert space $\tilde{V}$, endowed with a
symmetric positive-definite bilinear form $\tilde{a} \ : \ \tilde{V} \times \tilde{V} \to \mathbb{R}$ so that the respective variational problems
are spectrally equivalent. To ensure this when $\tilde V \not\subset V$ we consider the sum $\hat{V}=V+\tilde{V}$ and
 two further symmetric positive-definite bilinear forms $\hat{a}, \, {b}\ : \ \hat{V} \times \hat{V} \to \mathbb{R}$
 which satisfy the following conditions:\\[1mm]
 {\bf ASM1:}
$\hat{a}$ is a spectrally equivalent extension of both $a$ and $\tilde{a}$, i.e.,
\begin{equation}
\label{eq:asm_a_equiv_ahat}
a(v,v) \simeq \hat{a}(v,v), \quad \forall\, v \in V, \qquad
\tilde{a}(\tilde{v},\tilde{v}) \simeq \hat a(\tilde{v},\tilde{v}) \quad \forall\, \tilde{v} \in \tilde{V} .
\end{equation}
\noindent
{\bf ASM2:}
${b}$ dominates ${a}$ on ${V}$, i.e.,
$\,
{a}({v},{v}) \lsim {b}({v},{v})$, for all $ {v} \in {V}
$;\\[2mm]
{\bf ASM3:}
there exist linear operators $Q: \tilde{V} \to V$ and $\tilde{Q}: V \to \tilde{V}$ such that
\begin{equation}
 \label{eq:asm_bound_Qtilde}
\tilde{a}(\tilde{Q}v,\tilde{Q}v) \lsim {a}(v,v) \quad \forall\, v \in V, \quad
{a}(Q \tilde{v},Q \tilde{v}) \lsim \tilde{a}(\tilde{v},\tilde{v})
\quad \forall\, \tilde{v} \in \tilde{V} ,
\end{equation}
and
\begin{equation}
 \label{eq:asm_approx_Qtilde}
{b}(v - \tilde{Q}v,v - \tilde{Q}v) \lsim a(v,v) \quad \forall\, v \in V, \quad
{b}(\tilde{v} - Q \tilde{v},\tilde{v} - Q \tilde{v}) \lsim \tilde a(\tilde{v},\tilde{v}) \quad \forall\, \tilde{v} \in \tilde{V} .
\end{equation}

These conditions imply the following stable splitting.
\begin{proposition}[{\cite{Oswald1996}, Theorem 1 (2)}]\label{prop:asm1}
The conditions {\bf ASM1-3}
imply the following norm equivalence in $V$:
\begin{equation}\label{eq:asm4}
\hat{c} \, a(v,v) \ \leq \ \inf_{{\begin{array}{cc}{w \in V, \ \tilde{v} \in \tilde{V}} \\ {v=w+Q\tilde{v}} \end{array}}}
\left\{ {b}(w,w)+\tilde{a}(\tilde{v},\tilde{v}) \right\} \ \leq \ \hat{C} \, a(v,v) \qquad \forall\, v \in V \;,
\end{equation}
where the constants $\hat{c}$ and $\hat{C}$ depend only on the constants implied by the assumptions
(see
\cite{brix-thesis} for explicit expressions).
\end{proposition}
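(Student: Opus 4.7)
The plan is to establish the two inequalities in \eqref{eq:asm4} separately, using in an essentially symmetric fashion the boundedness properties \eqref{eq:asm_bound_Qtilde}, \eqref{eq:asm_bound_Q} together with the approximation properties \eqref{eq:asm_approx_Qtilde}, \eqref{eq:asm_approx_Q}. Throughout, I would use that each of $a$, $\tilde{a}$, $\hat{a}$, $b$ is a symmetric positive-definite bilinear form so that $(u,v)\mapsto \sqrt{a(u,v)}$ etc.\ obey triangle inequalities on the relevant spaces, and in particular $a(u+w,u+w) \leq 2a(u,u)+2a(w,w)$ and likewise for $b$.

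For the upper bound, I would pick the canonical decomposition
\[
\tilde{v} := \tilde{Q}v \in \tilde{V}, \qquad w := v - Q\tilde{Q}v \in V,
\]
which trivially satisfies $v=w+Q\tilde{v}$. The term $\tilde{a}(\tilde{v},\tilde{v})=\tilde{a}(\tilde{Q}v,\tilde{Q}v)$ is controlled by $a(v,v)$ thanks to \eqref{eq:asm_bound_Qtilde}. For the $b$-term I would split
\[
w = (v-\tilde{Q}v) + (\tilde{Q}v - Q\tilde{Q}v),
\]
where the first summand lies in $\hat{V}$ and the second in $\hat{V}$ as well. Then $b(w,w) \lesssim b(v-\tilde{Q}v,v-\tilde{Q}v)+b(\tilde{Q}v-Q\tilde{Q}v,\tilde{Q}v-Q\tilde{Q}v)$, and \eqref{eq:asm_approx_Qtilde} applied to $v\in V$ bounds the first contribution by $\hat{a}(v,v)\simeq a(v,v)$ via \eqref{eq:asm_a_equiv_ahat}, while \eqref{eq:asm_approx_Q} applied to $\tilde{Q}v\in\tilde{V}$ bounds the second by $\hat{a}(\tilde{Q}v,\tilde{Q}v)\simeq \tilde{a}(\tilde{Q}v,\tilde{Q}v)$ via \eqref{eq:asm_atilde_equiv_ahat}, which in turn is controlled by $a(v,v)$ via \eqref{eq:asm_bound_Qtilde}. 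Summing these estimates delivers a particular decomposition realizing the right-hand side inequality, so the infimum is bounded by $\hat{C}\,a(v,v)$.

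For the lower bound, I would take an \emph{arbitrary} admissible decomposition $v=w+Q\tilde{v}$ with $w\in V$ and $\tilde{v}\in\tilde{V}$. Since both $w$ and $Q\tilde{v}$ belong to $V$, I can invoke the triangle-type inequality for $a$:
\[
a(v,v) \;\lesssim\; a(w,w) + a(Q\tilde{v},Q\tilde{v}).
\]
Assumption \eqref{eq:asm_b_bounds_a} immediately yields $a(w,w)\lesssim b(w,w)$, while \eqref{eq:asm_bound_Q} gives $a(Q\tilde{v},Q\tilde{v})\lesssim \tilde{a}(\tilde{v},\tilde{v})$. Adding these two bounds and passing to the infimum over all admissible $(w,\tilde{v})$ produces the left inequality in \eqref{eq:asm4}.

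There is no real obstacle here; the proof is mostly a careful bookkeeping of the seven hypotheses and a judicious choice of one specific splitting in the upper-bound direction. The only point deserving attention is that in the upper-bound step the auxiliary quantity $\tilde{Q}v - Q\tilde{Q}v$ must be treated as an element of $\hat{V}$ so that both \eqref{eq:asm_atilde_equiv_ahat} and \eqref{eq:asm_approx_Q} can be applied to it; making the roles of $V$, $\tilde{V}$, $\hat{V}$ explicit at each step avoids confusion and yields the desired constants $\hat{c},\hat{C}$ expressed entirely in terms of those appearing in \eqref{eq:asm_a_equiv_ahat}--\eqref{eq:asm_approx_Q}.
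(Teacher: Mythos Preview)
Your argument is correct and is the standard one for this type of fictitious/auxiliary space lemma. Note, however, that the paper does not actually supply a proof of Proposition~\ref{prop:asm1}; it merely states the result and defers the explicit constants to \cite{brix-thesis}. So there is nothing in the paper to compare against, but your outline fills the gap cleanly: the lower bound uses only \eqref{eq:asm_b_bounds_a} and \eqref{eq:asm_bound_Q} via $a(v,v)\leq 2a(w,w)+2a(Q\tilde v,Q\tilde v)$, and the upper bound is obtained by the specific splitting $\tilde v=\tilde Q v$, $w=v-Q\tilde Q v$, with $w=(v-\tilde Q v)+(\tilde Q v-Q\tilde Q v)$ handled by \eqref{eq:asm_approx_Qtilde} and \eqref{eq:asm_approx_Q} respectively, the equivalences \eqref{eq:asm_a_equiv_ahat}, \eqref{eq:asm_atilde_equiv_ahat} converting $\hat a$ back to $a$ and $\tilde a$, and \eqref{eq:asm_bound_Qtilde} closing the loop. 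One minor wording point: the element $\tilde Q v - Q\tilde Q v$ need not be ``treated as an element of $\hat V$'' for any special reason---it simply \emph{is} in $\hat V$, and what matters is that \eqref{eq:asm_approx_Q} applies because its input $\tilde Q v$ lies in $\tilde V$.
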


 Proposition~\ref{prop:asm1} has the following main consequence. Let $\mathbf{A}$, $\tilde{\mathbf{A}}$
and $\mathbf{B}$ denote the Gramian matrices for the bilinear forms $a$, $\tilde{a}$ and ${b}$ (restricted
to $V \times V$) with respect to suitable bases of the spaces $V$ and $\tilde{V}$.
Let $\mathbf{S}$ be the matrix representation of $Q$ with respect to these bases.
\begin{corollary}[see \cite{Oswald1996}, Theorem 2]
\label{cor:asm1}
Let $\mathbf{C}_\mathbf{B}$ and
$\mathbf{C}_{\tilde{\mathbf{A}}}$ be symmetric preconditioners for $\mathbf{B}$ and $\tilde{\mathbf{A}}$,
respectively, satisfying the following spectral bounds:
\begin{equation*}
 0 < \Lambda_{\min} \le
\min\{\lambda_{\min}(\mathbf{C}_\mathbf{B} \mathbf{B}), \ \lambda_{\min}(\mathbf{C}_{\tilde{\mathbf{A}}} \tilde{\mathbf{A}})\} \le
\max \{ \lambda_{\max}(\mathbf{C}_\mathbf{B} \mathbf{B}), \ \lambda_{\max}(\mathbf{C}_{\tilde{\mathbf{A}}} \tilde{\mathbf{A}})\} \le
\Lambda_{\max} \;.
\end{equation*}
{Moreover, let $\hat{c},\hat{C}>0$ be the constants in the norm equivalence \eqref{eq:asm4}.}
Then, under the assumptions {\bf ASM1-3},
$\mathbf{C}_\mathbf{A} = \mathbf{C}_\mathbf{B}+ \mathbf{S} \mathbf{C}_{\tilde{\mathbf{A}}}\mathbf{S}^T$
is a symmetric preconditioner for $\mathbf{A}$, and
\begin{equation*}
\kappa(\mathbf{C}_\mathbf{A} \mathbf{A}) \leq \frac{\Lambda_{\max}}{\Lambda_{\min}} \; \frac{\hat{C}}{\hat{c}} \;.
\end{equation*}
\end{corollary}

Note that only the operator $Q$ enters the actual construction of the preconditioner while $\tilde Q$ is only needed for its analysis.
We proceed recalling the following convenient simplifications.

\begin{proposition}
If {\bf ASM2} is replaced by the stronger condition:
{\bf ASM2':} ${b}$ dominates $\hat{a}$ on $\hat{V}$, i.e.,
$\, \hat{a}(\hat{v},\hat{v}) \lsim {b}(\hat{v},\hat{v})$, for all $ \hat{v} \in \hat{V}$,
then one can skip checking the inequalities \eqref{eq:asm_bound_Qtilde}
in {\bf ASM3}.
\end{proposition}

\noindent The statement easily follows from the first part of the proof of Theorem 1 in \cite{Oswald1996}.

\begin{proposition}
\label{prop:asm2}
{\rm
When $\tilde{V} \subset V$,
 then
$\hat{V}=V$ and one can take $\tilde{a}=\hat{a}=a$, so that an obvious
choice for $Q$ is the canonical injection. In this case, the only conditions that need be verified
for Proposition~\ref{prop:asm1} are {\bf ASM2} together with
 the existence of a linear operator $\tilde{Q}: V \to \tilde{V}$ such that}
\begin{equation}\label{eq:asm6}
{b}(v - \tilde{Q}v,v - \tilde{Q}v) \lsim {a}(v,v) \qquad \forall\, v \in V \;.
\end{equation}
\end{proposition}

The central remaining objective is now to identify for $V= V_\delta$ a suitable {\em conforming} auxiliary space $\tilde V\subset H^1_0(\Omega)$
along with the operators $Q, \tilde Q$. Since one cannot expect $\tilde V\subset V_\delta$ the operator $Q$ will be non-trivial.

\section{Reduction to a conforming problem}
\label{sec:sdgtoscg}

A common strategy for treating nonconforming discretizations is to look for a suitable conforming subspace
in order to treat the corresponding auxiliary problem with the aid of efficient multilevel techniques while the remaining
high-frequency, non-conforming part is efficiently treated by smoothing realized by relaxation sweeps. Eventually, we shall be looking for
a low-order conforming finite element space as auxiliary space as well. However, as will be explained later in more detail,
the principal tool that suggests itself,
namely introducing for each high-order element a so called {\em Legendre-Gauss-Lobatto} (LGL) (sub-)grid,
on which piecewise multilinear finite elements could be defined, does not comply with global conformity
as soon as the polynomial degrees $\bp(R)$ vary from element to element.

We will therefore split the construction of a DG-preconditioner in two main steps. The first one is to view
the largest conforming subspace $V^c_\delta \subset V_\delta \cap H^1_0(\Omega)$ as an auxiliary space.
This comes with the additional benefit that such a step can be combined with domain decomposition techniques
to obtain a DG-preconditioner with only mildly growing condition numbers \cite[]{CPP}, but now retaining this
performance for varying polynomial degrees.

As indicated above, the first main conceptual tool revolves around LGL quadrature nodes
$a={\xi}_0 < \dots < {\xi}_{j-1}
< {\xi}_{j} < \dots < {\xi}_{p}=b$ of order $p$ in the interval $I=[a,b]$ of positive length $H=b-a$.
We refer to the collection of these nodes as the LGL grid $\cG_p(I)$. It is well-known that
there exist positive weights ${w}_0, \dots, {w}_{p} \in \R$ such that
\begin{equation}\label{eq:basic.1}
\sum_{j=0}^p v({\xi}_j) \, {w}_j = \int_{{I}} v(x)\, dx
\qquad \forall v \in \mathbb{P}_{2p-1}(I) \;.
\end{equation}
Obviously, any $v \in \mathbb{P}_p(I)$ is uniquely determined by its
values on ${\cal G}_p({I})$. We recall that nodes and weights are classically defined on the reference interval
$\hat{I}=[-1,1]$, as $\hat{\xi}_j$, $\hat{w}_j$, {respectively}. By affine transformation, one has
$\xi_j=a+H(\hat{\xi}_j+1)/2$ and $w_j=(H/2)\hat{w}_j$.
We also recall that weights with index $j$ close to $0$ or $p$ satisfy ${w}_{j} \simeq H p^{-2}$ (in particular,
$w_0=w_p=H (p(p+1))^{-1}$), whereas
weights with index $j$ close to $p/2$ satisfy ${w}_{j} \simeq Hp^{-1}$. Yet, the variation in the
order of magnitude is smooth, as made precise by the estimates
\begin{equation}\label{eq:basic.1bis}
{w}_{j-1} \simeq {w}_{j} \quad \text{for} \quad 1 \le j \le p \;,
\end{equation}
which hold uniformly in $p$ and $H$ (see, e.g., \cite{Ca94,PaRo95}).

Moreover, given any element $R = \bigtimes_{k=1}^d I_k$ of $\cR$ and
denoting by $\xi_{k, j_k}$ the corresponding LGL nodes in $I_k$ the product set
\begin{equation}
\label{eq:basic.10bis}
\cG_\bp(R)=\bigtimes_{k=1}^d \cG_{p_k}(I_k)=\{\xi =(\xi_{1,j_1}, \xi_{2,j_2}, \dots, \xi_{d,j_d})
\quad \mbox{for} \ \ 0 \leq j_k \leq p_k,\ \ 1 \leq k \leq d\}
\end{equation}
may be viewed as a {\em sub-grid} for the element $R$ and will be referred to as (tensorial) LGL grid on $R$, obviously forming
a unisolvent set for $\mathbb{Q}_\bp(R)$.
Such grids have been intensely used and studied in the context of spectral methods (see, e.g., \cite{BeMa97,CaHuQuZa06,HeWa08})
and more detailed properties will be given later.

Regarding the ASM framework, setting $\tilde V= V_\delta^c \subset V_\delta=V$, we can take $\hat a(\cdot,\cdot)
= a_\delta(\cdot,\cdot)$ and $\tilde a(\cdot,\cdot)
:= a_\delta(\cdot,\cdot)\mid_{V_\delta^c\times V_\delta^c}$. Moreover, since $\tilde V\subset V$, the operator $Q: \tilde V\to V$ is
the canonical injection. Thus, defining a preconditioner based on $V^c_\delta$ as an auxiliary space it remains
to specify
the auxiliary bilinear form $b(\cdot,\cdot)$. The following definition is inspired by
a refined version of the classical inverse inequality
$\Vert v' \Vert_{0,I} \lsim p^2 H^{-1} \, \Vert v \Vert_{0,I}$
for all $v \in \mathbb{P}_p(I)$, and reads in terms of LGL nodes and weights:
\begin{equation}
\label{eq:basic.9ter}
\Vert v' \Vert_{0,I}
\lsim \left(\sum_{j=0}^p v^2({\xi}_j) {w}^{-1}_j \right)^{1/2}
\qquad \forall v \in \mathbb{P}_p(I)
\end{equation}
(see Section~\ref{ssec:stab} for a proof).
Introducing the {\em product weights} $w_{\xi}=w_{1,j_1}w_{2,j_2} \cdots w_{d,j_d}$
associated to each node $\xi \in \cG_\bp(R)$, and
denoting by
$w_{\xi,k}$ the factor $w_{j_k}$ coming from the $k$-th direction, we introduce the weights
\begin{equation}
\label{eq:ascs1}
W_\xi= \left(\sum_{k=1}^d w_{\xi,k}^{-2} \right) w_\xi
= \sum_{k=1}^d w_{\xi,k}^{-1}\Big(\prod_{j\neq k}w_{\xi,j}\Big) \;.
\end{equation}
The bilinear form $b_\delt(\cdot,\cdot) \ : \ V_\delta \times V_\delta \to \mathbb{R}$ is defined as
\begin{equation}
\label{def:b_delta}
b_\delt(u,v) = \sum_{R \in \cR} b_R(u,v)\;, \qquad \text{with } \quad
b_R(u,v):= \sum_{\xi \in \cG_\bp(R)}
u(\xi) \, v(\xi) \, c_\xi W_\xi \;.
\end{equation}
Here
the strictly positive coefficients $c_\xi \simeq 1$
(meaning that they are bounded from above and from below independently of $\xi$, $\bp$ and $\bH$)
will be chosen in applications so as to
enhance the effectivity of the ASM preconditioner, see Section~\ref{sec:numres-I} for the details.

Note that $b_\delt(\cdot,\cdot)$ is defined strictly element-wise, i.e. it does not involve any coupling
between different elements $R$; in particular, for the Lagrange basis $\{\phi_{R,\xi}: \xi \in \cG_\bp(R),\, R\in \cR\}$
associated with the LGL (sub-)grids of the elements, the matrix
${\bf B}_1:= \left(b_\delta(\phi_{R,\xi},\phi_{R',\xi'})\right)_{(R,\xi),(R',\xi')}$ is {\em diagonal}.

Since $\tilde V=V^c_\delta \subset V_\delta$, the operator $Q$ is just the canonical injection whose matrix representation ${\bf S}_1$
is, however, not the identity matrix.
The main result of this section, whose proof is deferred to Section~\ref{sec:proof-I}, reads as follows.

\begin{theorem}
\label{th:stageI}
Let $\tilde{\bf A}_1$ be the stiffness matrix for the conforming problem:
find $u_\delta\in V_\delta^c$ such that
\begin{equation}
\label{eq:conf}
a(u_\delta,v)= (f,v)_{0,\Omega},\quad v\in V_\delta^c,
\end{equation}
where $a(v,w)= \sum_{R\in\cR}(\nabla v,\nabla w)_{0,R}$ agrees with $a_\delta(\cdot,\cdot)$ on $V_\delta^c \times V_\delta^c$,
and assume that ${\bf C_{\tilde A_1}}$ is a symmetric preconditioner for \eqref{eq:conf}.
Then, denoting by ${\bf A}$ the stiffness matrix for \eqref{eq:setting.1}, there exists a constant $C_1$ such that ${\bf \bar C_A}:= {\bf B}_1^{-1} +
{\bf S}_1 {\bf C_{\tilde A_1}}{\bf S}_1^T$ is
a symmetric preconditioner for \eqref{eq:setting.1} satisfying
\begin{equation}
\label{eq:perform-1}
\kappa({\bf \bar C_A}{\bf A})\leq C_1 \kappa ({\bf C_{\tilde A_1}}\tilde{\bf A}_1),
\end{equation}
uniformly in $\delta=(\bH,\bp)$ with $C_1$ depending only on the grading conditions \eqref{eq:setting.50}.
\end{theorem}
LGL quadrature is not only essential for the analysis of the proposed schemes but allows one to formulate ``equivalent
discrete DG-bilinear forms'' which enhance computational efficiency, see the discussion in \cite{brix-thesis}.

\subsection{Numerical experiments}\label{sec:numres-I}

We demonstrate next the performance of the preconditioner ${\bf \bar C_A}$ from Theorem~\ref{th:stageI} when ${\bf C_{\tilde A_1}}= \tilde {\bf A}_1^{-1}$,
that is, the auxiliary problem is solved exactly.
For more extensive tests we refer to \cite{brix-thesis,BCD2013}.
We consider two test scenarios as shown in Fig.~\ref{fig:checkerboard}, exhibiting
a checkerboard distribution of polynomial degrees and a more monotonic polynomial degree distribution which one would expect for a mesh refinement.
Concerning the values of $p,q \in \N$, we consider the following five cases in the first scenario: (i) we either use a constant polynomial degree $q=p$, (ii) we simulate a small variation of the polynomial degree by choosing $q=p+2$, or a large variation of the polynomial degree represented
by multiplicative relations (iii) $q=3/2p$ for even $p$, (iv) $q=7/4p$ for $p$ chosen as a multiple of $4$ or (v) $q=2p$.

\begin{figure}[t]
\centering
\subfloat[First test scenario: checkerboard distribution of polynomial degrees.\label{fig:checkerboard2}]{ \includegraphics[width=0.3\linewidth]{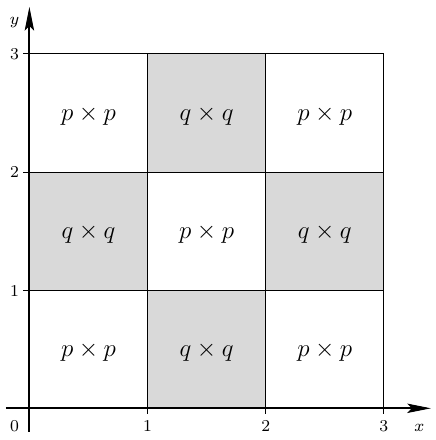}}\hspace{0.1\linewidth}
\subfloat[Second test scenario: monotonic distribution of polynomial degrees\label{fig:checkerboard5}]{\includegraphics[width=0.3\linewidth]{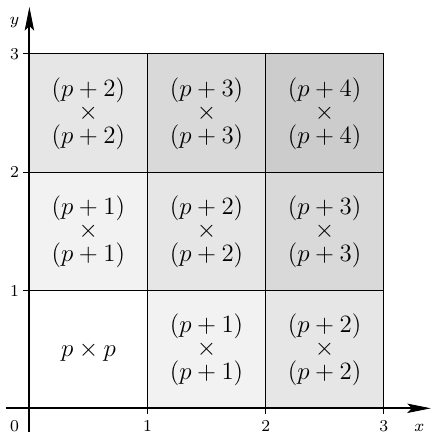}}
\caption{Grid and distribution of polynomial degrees for the test scenarios. Darker shading indicates higher polynomial degree in the patch.}
\label{fig:checkerboard}
\end{figure}

The main issue at this stage is to calibrate the tuning parameters in the bilinear form $b_\delt(\cdot,\cdot)$.
From \eqref{def:b_delta}
\begin{equation*}
b_{\delta}(u,v) =
\beta_1 \left(
c_1^2 \sum_{R \in \cR} \sum_{\xi \in \cG_\bp(R)} u(\xi) \, v(\xi) \, W_{\xi}
+\gamma \rho_1 \sum_{F \in \cF_{d-1}} \omega_F \sum_{\pm} \sum_{\xi \in \cG_\bp(F,R^\pm)} w_{F,R^{\pm}} u^{\pm}(\xi) v^{\pm}(\xi)
\right),
\end{equation*}
where $w_{F,R^{\pm}}$ is the LGL quadrature weight on $F$ seen as a face of $R^{\pm}$.
Consequently, a reasonable ansatz for the constants $c_{\xi}$ in \eqref{eq:ascs1} is
\begin{equation*}
c_{\xi} = \left\{
\begin{array}{cc}
\beta_1(c_1^2+\gamma \rho_1 \omega_F w_{F,R}/W_{\xi}),& \textnormal{for} \ \xi \in \cG_\bp(F,R), \ F\in \cF_{d-1}(R), \ R \in \cR,\\
\beta_1 c_1^2, & \textnormal{else}.
\end{array}\right.
\end{equation*}
The diagonal structure of ${\bf B}_1$ is, of course, not affected by the choice of these parameters.
Preliminary experiments concerning the constant arising in the inverse estimate \eqref{eq:basic.9ter}
reveal that $c_1^2=10$ is a good choice which we fix in our subsequent tests.

The condition numbers $\kappa(\mathbf{\bar C}_\mathbf{A} \mathbf{A})$
are depicted as contour plots for $p=8$ and for $p=16$ in Fig.~\ref{fig:contourCond1}
as functions of $\beta_1 \in [0.05,1.1]$ and $\rho_1\in [0,2]$. We observe that for both values of $p$ there is a very flat minimum of the condition number that is located near the parameter point $(\beta_1,\rho_1)=(0.15,1.25)$. From now on we fix these parameter values for the rest of the paper.
\begin{figure}[t]
\centering
\subfloat[$p=8$, $q=16$]{\includegraphics[width=0.43\linewidth]{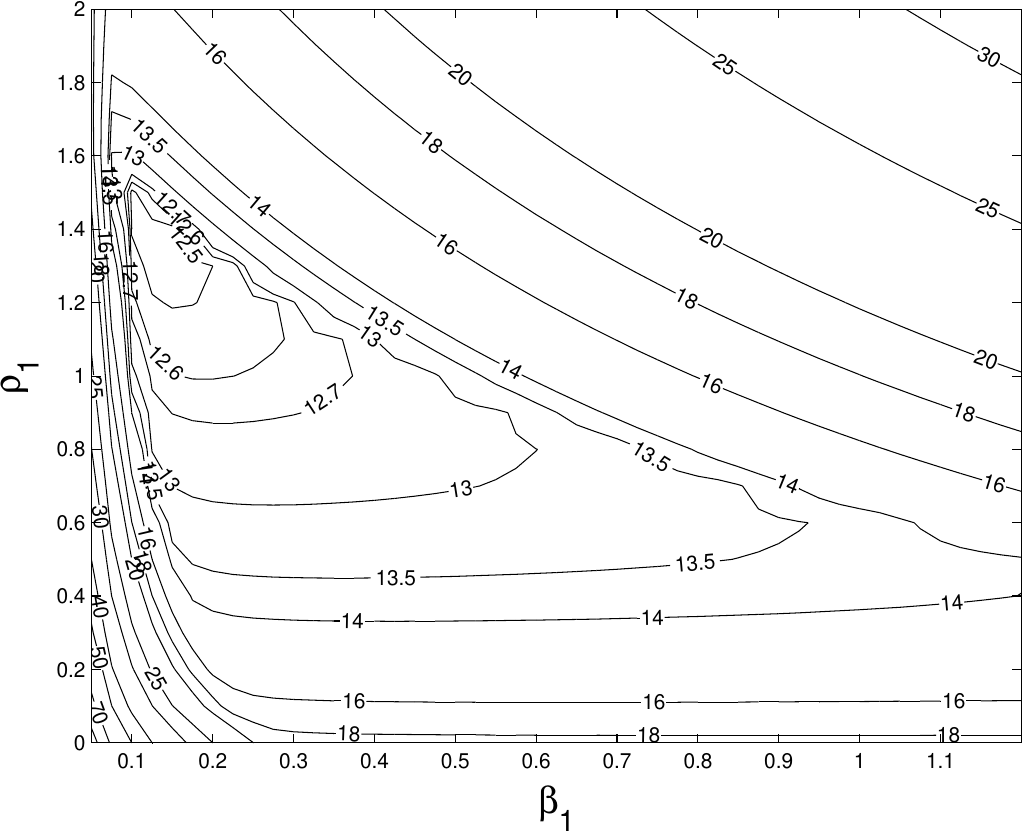}}\hspace{0.03\linewidth}
\subfloat[$p=16$, $q=32$]{\includegraphics[width=0.43\linewidth]{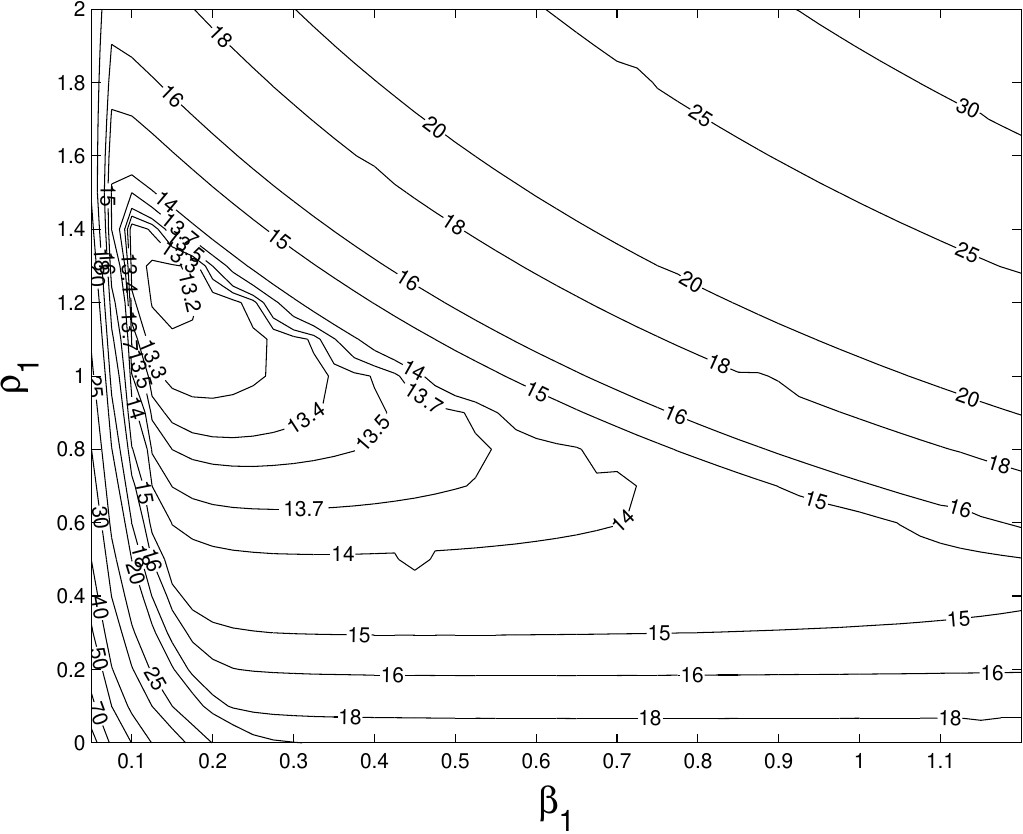}}
\caption{Contour plots illustrating the dependence of the condition number $\kappa(\mathbf{\bar C}_\mathbf{A} \mathbf{A})$ on $\beta_1 > 0$ and $\rho_1 \ge 0$.}
\label{fig:contourCond1}
\end{figure}

Fig.~\subref*{fig:resultsASM1a} shows the condition numbers obtained in the first scenario for the relations (i) - (v)
between $p$ and $q$.
We observe that the condition numbers stay uniformly bounded {as $p$ increases although} the upper bound depends on the ratio $q/p$.
In the case of additively increasing the polynomial degree $q=p+2$, the quotient $q/p$ decreases, which is also visible in Fig.~\subref*{fig:resultsASM1a}. The analogous plot for the second test scenario, representing a typical $p$-adaptation, is depicted in Fig.~\subref*{fig:resultsASM1b}.
In this case the condition numbers are almost constant and slightly smaller than $7.5$.

\begin{figure}
\centering
\subfloat[First test scenario.\label{fig:resultsASM1a}]{\includegraphics[height=0.385\linewidth]{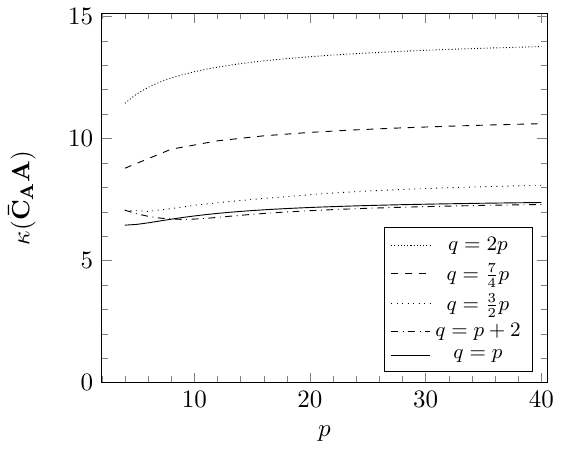}}\hspace{0.01\linewidth}
\subfloat[Second test scenario.\label{fig:resultsASM1b}]{\includegraphics[height=0.385\linewidth]{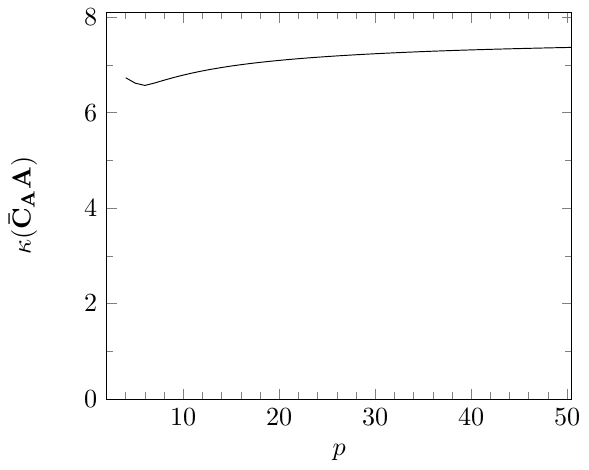}}
\caption{Condition numbers $\kappa(\mathbf{\bar C}_\mathbf{A} \mathbf{A})$.}
\label{fig:resultsASM1}
\end{figure}

\section{The conforming problem}\label{sec:conf}
In view of Theorem~\ref{th:stageI}, it remains to develop a preconditioner for the related conforming problem \eqref{eq:conf}
over the high-order conforming subspace $V^c_\delta$. We emphasize that the results of this section are of
interest in their own right because the proposed preconditioner for the conforming problem offers a solver performance that
does not seem to be available yet so far.

In what follows one should keep in mind that the degree function $\bp$ defining $V_\delta^c$, determines for each $R\in \cR$
a unique LGL (sub-)grid $\cG_\bp(R)$ along with the corresponding (micro-)partition
$\cT_\bp(R)=\cT(\cG_\bp(R))$ of $R$ into (hyper-)rectangles
$S=S_{\boldsymbol{\ell}}=\bigtimes_{k=1}^d I_{k,\ell_k}$ for $\boldsymbol{\ell} \in \bigtimes_{k=1}^d \{1, \dots, p_k\}$,
where each $I_{k,\ell_k}$ is an interval in the univariate partition $\cT_{p_k}(I_k)$ of $I_k$
(see Fig.~\subref*{fig:GNC-grid a}).
With $\cG_\bp(R)$ we associate next the local piecewise multi-linear finite element space
on $\cT_\bp(R)$ given by
\begin{equation}
\label{def-tens-interp2}
V_{h,\bp}(R)=\{v \in C^0(R) \, : \, v\mid_{S} \in \mathbb{Q}_1 \ \ \forall S \in \cT_\bp(R) \} = \bigotimes_{k=1}^d V_{h,p_k}(I_k).
\end{equation}

To state now a property that is pivotal for our purposes, we introduce the following tensor product interpolation operators
\begin{equation}
\label{def-tens-interp1}
\cI_\bp = \cI_\bp^R \ : \ C^0(R) \to \mathbb{Q}_\bp(R)\;, \qquad \cI_\bp^R = \bigotimes_{k=1}^d \cI_{p_k}^{I_k},
\end{equation}
and
\begin{equation}
\label{def-tens-interp3}
\cI_{h,\bp} = \cI_{h,\bp}^R \ : \ C^0(R) \to V_{h,\bp}(R) \;, \qquad \cI_{h,\bp}^R = \bigotimes_{k=1}^d \cI_{h,p_k}^{I_k} \;.
\end{equation}
where $\cI_{p_k}^{I_k}, \cI_{h,p_k}^{I_k}$ are the corresponding univariate operators, satisfying
$\cI_{p_k}^{I_k} v \in \mathbb{P}_{p_k}(I_k)$, $\cI_{h,p_k}^{I_k} v \in V_{h,p_k}(I_k)$ and
\begin{equation*}
(\cI_{p_k}^{I_k}v)(\xi_{k,j})= v(\xi_{k,j}), \qquad (\cI_{h,p_k}^{I_k}v)(\xi_{k,j})= v(\xi_{k,j}), \quad0\leq j\leq p_k.
\end{equation*}
The crucial fact is that the operators $\cI_{h,\bp}^R$ induce uniformly bounded topological isomorphisms between
$\mathbb{Q}_\bp(R)$ and $V_{h,\bp}(R)$
with respect to both the $L_2$ and $H^1$ norms, whose inverse is $\cI_\bp^R$.
\begin{property}
\label{prop:Nhequivalence-multi}
\! {\rm (\cite{Ca94})} \, For any $v \in \mathbb{Q}_\bp(R)$,
set $v_h:=\cI_{h,\bp}^R v$. Then,
\begin{equation}
\label{eq:basic.16}
\Vert {v} \Vert_{0,R} \simeq \Vert {v}_{h} \Vert_{0,R} \qquad \mbox{and } \qquad
\Vert \nabla {v} \Vert_{0,R} \simeq \Vert \nabla {v}_h \Vert_{0,R} \;.
\end{equation}
\noindent
The constants in both relations
are independent of $\bp$ and $\bH$. \endproof
\end{property}
The relations \eqref{eq:basic.16} follow from the analogous univariate relations (see Property~\ref{prop:Nhequivalence-uni})
combined with the following
frequently used fact.
\begin{proposition}
\label{propos:tensorprop}
For $1 \leq k \leq d$, let $V_k, \ W_k \subset H^1(I_k)$ be finite dimensional subspaces.
Let $L_k : V_k \to W_k$ be linear operators satisfying, for $m=0, 1$,
\begin{equation}\label{eq:tensorprop1}
\Vert L_k v \Vert_{m,I_k} \lsim \Vert v \Vert_{m,I_k} \quad \forall v \in V_k\;.
\end{equation}
Then, setting $V:=\bigotimes_{k=1}^d V_k$ and $ W:=\bigotimes_{k=1}^d W_k$,
the operator $L= \bigotimes_{k=1}^d L_k : V \to W$ satisfies, for $m=0, 1$,
\begin{equation}\label{eq:tensorprop2}
\Vert L v \Vert_{m,R} \lsim \Vert v \Vert_{m,R} \quad \forall v \in V \;.
\end{equation}
Moreover, the same relations hold for the corresponding seminorms.
\end{proposition}

Property~\ref{prop:Nhequivalence-multi} plays a pivotal role in the analysis of the preconditioners, see
in particular Section~\ref{ssec:stab}. Its significance is indicated by the fact that $V_{h,\bp}(R)$ is an ideal auxiliary space for $\mathbb{Q}_\bp(R)$
when $\cR$ consists of a single element. However, even ignoring the question of efficiently solving the obtained
low-order problem, when dealing with the general case of several elements the corresponding
LGL (sub-)grids do not match at the element interfaces when the degrees $\bp(R)$ vary from element to element as
shown by Fig.~\subref*{fig:GNC-grid a}.
Globally conforming piecewise multilinear finite elements on the corresponding composite partitions would therefore {\em not}
form suitable auxiliary spaces since the traces at the element interfaces would have to be at best linear on the whole interface.

\begin{figure}
\centering
\subfloat[LGL grids and partitions \label{fig:GNC-grid a}]{\includegraphics[width=0.43\textwidth]{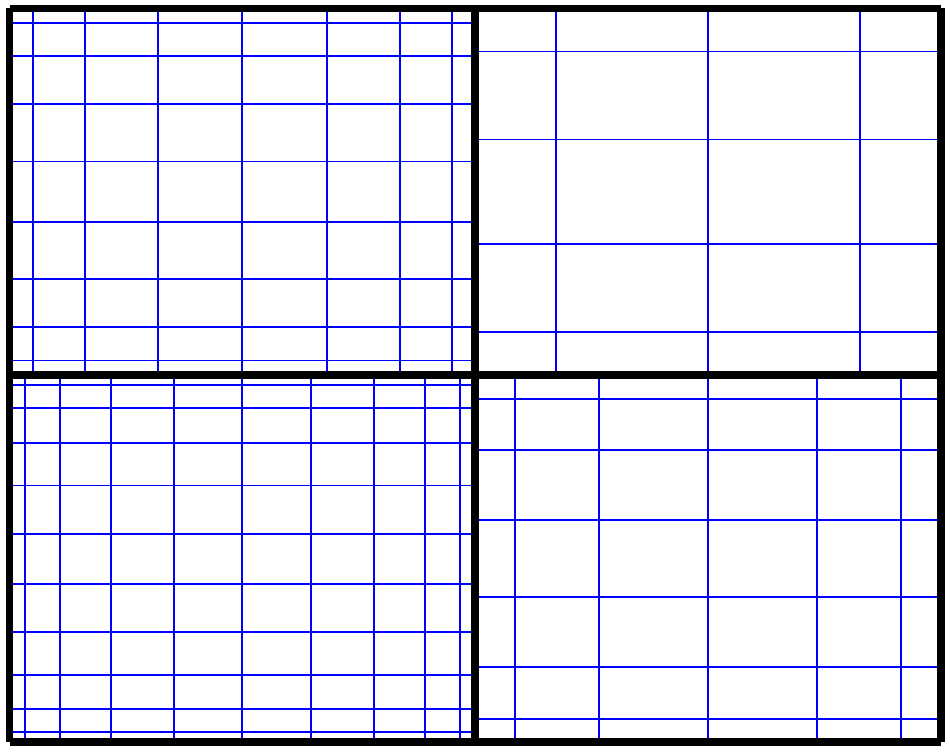}}\hspace{0.1\linewidth}
\subfloat[Associated dyadic grids and partitions \label{fig:GNC-grid b}]{\includegraphics[width=0.43\textwidth]{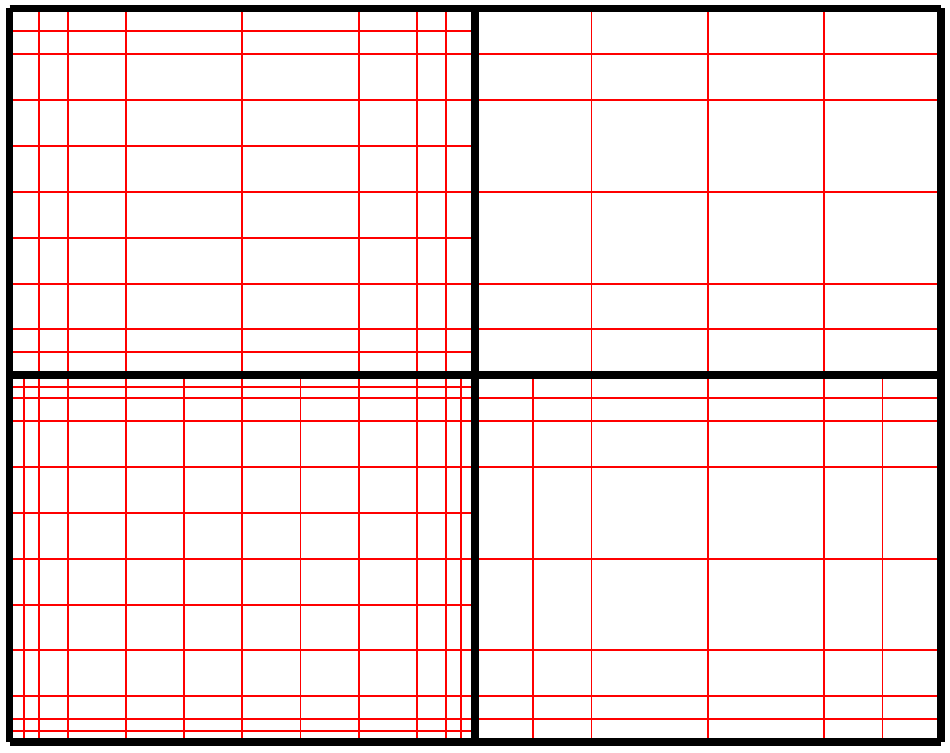}}
\caption{LGL and dyadic sub-grids in a patch of elements in $\mathbb{R}^2$}
\label{fig:GNC-grid}
\end{figure}

The next major conceptual tool is therefore the construction of suitable composite grids which, on the one hand are
still sufficiently close to the composite LGL grids but, on the other hand, give rise to sufficiently rich
globally conforming {\em nested} low-order finite element spaces, to be used as auxiliary spaces.

\subsection{Dyadic meshes}\label{sec:dyadic}

As before we refer to an ordered collection $\cG=\{\xi_j : 0 \le j \le p\} \subset I =[a,b]$ with
$
a=\xi_0 < \dots < \xi_{j-1}< \xi_j < \dots < \xi_p=b$, as a grid in $I$ and continue to denote
by
$\cT=\cT(\cG)$ the induced partition comprised of the intervals $I_j = [\xi_{j-1},\xi_j]$.
Such a grid is called {\em locally $C_g$-quasiuniform} if there exists
a constant $C_g > 1$ such that
\begin{equation}\label{grid}
C_g^{-1} \le \frac{ \abs{I_{j+1}} }{ \abs{I_j} }\le C_g \;, \qquad 1 \le j \le p-1 \,.
\end{equation}
LGL grids are locally $C_g$-quasiuniform, see \cite{BCD2013} for estimates of $C_g$.

Moreover, to be able to ``compare'' different grids we call
a grid $\cG$ in $I$ {\em locally $(A,B)$-uniformly equivalent} to another ordered grid $\tilde{\cG}$ if there exist
constants $0 < A < B$ such that
\begin{equation}
\label{gridunieq}
\forall I_j \in \cT(\cG), \ \ \forall \tilde{I}_l \in \cT(\tilde{\cG}) \;, \qquad I_j \cap \tilde{I}_l \neq \emptyset
~\implies~
A \le \frac {\abs{I_j}}{\abs{\tilde{I}_l}} \le B \;.
\end{equation}

The central issue of this section is to construct for a given grid $\cG$ in $I$ an equivalent (in the sense of \eqref{gridunieq})
dyadic grid $\cD$.
To this end, we first introduce an intermediate construction.

Given a real $\alpha >0$ and an initial dyadic partition $\cD_0$, a dyadic partition $\cD$
(identified for brevity with the associated grid)
can be constructed {iteratively} as follows: \\

\noindent
{\bf Dyadic}$\,[\cG,\cD_0,\alpha]\to \cD$:
\begin{itemize}{\it
\item[(i)]
Set $\cD:=\cD_0$.
\item[(ii)]
While there exists $D\in \cD$ such that
\begin{equation}
\label{split}
\abs{D} > \alpha |\oI(D,\cG)| \;,
\end{equation}
where $\oI(D,\cG):= {\rm argmax}\,\{|I_j|: I_j\in \cT(\cG),\, D\cap I_j\neq \emptyset\}$, then
split $D$ by halving it and replace it by its two children $D', D''$, $D=D'\cup D''$, i.e.
\begin{equation*}
(\cD\setminus \{D\} )\cup \{D',D''\}\to \cD.
\end{equation*}
}
\end{itemize}

Such a dyadic mesh generator, applied to the sequence of
LGL grids $\cG_p=\cG_p(I)$, produces grids $\cD_p^* :={\bf Dyadic}\,[\cG_p,\{ I \},\alpha]$
which enjoy a number of useful structural properties such as always containing the midpoint of the interval and being symmetric
around the midpoint. However, the sequence of these grids still fails to be nested although exceptions seem to be very rare (see \cite{brix-thesis}).

Since nestedness is essential we remedy the deficiency of ${\bf Dyadic}\,[\cG_p,\{ I \},\alpha]$ by
employing the following recursive definition:\\

\noindent
{\bf NestedDyadic}$\,[\cG_p,\{ I \},\alpha]\ \to \cD_p$:
\begin{itemize}
{\it
\item[(i)] Given $\alpha >0$, set
$
\cD_1:= {\bf Dyadic}[\cG_1,\{ I \},\alpha]$.
\item[(ii)]
For $p>1$, given $\cD_{p-1}$, set
$
\cD_{p}:= {\bf Dyadic}[\cG_{p},\cD_{p-1},\alpha]$.
}
\end{itemize}

The following result from \cite{BCD2013} will be essential for the realization of ASM-conditions.
\begin{theorem}
\label{lemdyadic-2}
For all $p \geq 1$, the dyadic meshes $\cD_{p}$ are nested, locally quasi-uniform,
and locally $(A,B)$-uniformly equivalent to
$\cG_p$ with constants $A,B$ specified as follows:
\begin{equation}
\label{dyeq2}
\forall D \in \cD_p \;, \ \ \forall I_j \in \cT(\cG_p) \;, \qquad I_j \cap D \neq \emptyset
~\implies~
A:=\alpha^{-1}\leq \frac{|I_j|}{|D|} \leq \frac{2C_g}{\min\{\alpha C_g^{-1},1\}}=:B \;.
\end{equation}
Furthermore,
\begin{equation}
\label{dyeq2bis}
\ \qquad
{\rm card}\, \cD_p \simeq {\rm card}\, \cG_p .
\end{equation}
In addition, if $1 \leq \alpha \leq 1.25$, for all values of $p$ such that the sequence of LGL interval lengths
is such that the quotients $ \frac{ \abs{I_{j+1}} }{ \abs{I_j} }$ are monotonically decreasing from the left-end to the center
of the interval $I$, then the grids $\cD_p$ are {\em graded}, i.e., any two adjacent intervals differ in subdivision
generation by at most one.
\end{theorem}

In order to keep the cardinality of $\cD_p$ as close as possible to that of $\cG_p$, numerical evidence suggests to choose
the parameter $\alpha$ close to 1,
see \cite{brix-thesis} for more details. The above condition on $p$ implying gradedness has been numerically checked for
$p$ up to 2000.

\smallskip
With the above construction of dyadic grids in intervals at hand, dyadic grids $\cD_\bp(R)$, $\bp=\bp(R)$,
on hyper-rectangles $R$
are obtained in a straightforward manner by tensorization. Just as the LGL grids $\cG_\bp(R)$, the dyadic grids $\cD_\bp(R)$ may be viewed as {\em sub-grids} on the (macro-) elements $R\in\cR$.
Let $\cT_{D,\bp}(R)=\cT(\cD_{\bp}(R))$ be the partition of $R$ into dyadic (hyper-)rectangles
$E=E_{\bf m}=\bigtimes_{k=1}^d D_{k,m_k}$, where each $D_{k,m_k}$ is a dyadic interval in the partition
$\cT_{D,p_k}(I_k)= \cT(\cD_{p_k}(I_k))$ of $I_k$; let $V_{h,D,p_k}(I_k)$ be the space of continuous piecewise linear functions subordinate to this partition.

We are now prepared to define the auxiliary space for $V_\delta^c$ as follows:
\begin{equation}
\label{def-tens-interp5}
V_{h,D,\bp}:= \{v\in H^1_0(\Omega): v\!\mid_{R}\in V_{h,D,\bp}(R),\, R\in \cR\},
\end{equation}
where
\begin{equation}
\label{def-tens-interp4}
V_{h,D,\bp}(R)=\{v \in C^0(R) \, : \, v\!\mid_{E} \in \mathbb{Q}_1 \ \ \forall \,E \in \cT_{D,\bp}(R) \}
 = \bigotimes_{k=1}^d V_{h,D,p_k}(I_k) .
\end{equation}
Thus, $V_{h,D,\bp}$ consists of globally conforming piecewise multi-linear functions
on a composite dyadic partition of $\Omega$.
In sharp contrast to the spaces on analogous composite LGL grids, due to the nestedness of the dyadic grid hierarchies,
the traces of functions in $V_{h,D,\bp}$ on
an element interface are now arbitrary continuous piecewise linear functions on the coarser one of the two adjacent ``glueable'' dyadic partitions.

\subsection{The auxiliary bilinear forms} \label{sect: auxbilforms}
Since in terms of the auxiliary space method $V = V_\delta^c$ and $\tilde V = V_{h,D,\bp}$ are both conforming spaces we
can simply take $a(\cdot,\cdot) = \tilde a(\cdot,\cdot) = \hat a(\cdot,\cdot)$. A little more care is required to
identify a suitable form $b(\cdot,\cdot)$. Unfortunately, it turns out that simply retaining the definition
\eqref{def:b_delta} will not work. In fact, using an inverse estimate throughout an element is too strong to
allow the direct estimates in {\bf ASM3} to hold since $ \hat a(\cdot,\cdot)$ no longer involves jump terms which in the
first stage of reduction to a conforming problem had to be controlled by the $b_\delt(\cdot,\cdot)$ form in {\bf ASM2}.
\begin{remark}
\label{rem-impossible}
The latter comments hint at the fact that when
going directly from the high-order DG-discretization to a globally conforming
auxiliary low-order discretization, it is difficult to identify a suitable auxiliary form $b(\cdot,\cdot)$.
Our approach of splitting the construction of an auxiliary space into two stages will eventually
provide a suitable preconditioner ${\bf C_B}$ by composition.
\end{remark}

As a consequence we will propose next a ``tamed'' version of $b_\delt(\cdot,\cdot)$ from \eqref{def:b_delta}
using the inverse inequality only in those subcells of an element $R$ which are not ``too anisotropic''.

Observing that
$a_R(u,v)=\sum_{k=1}^d a_{R,k}(u,v)=\sum_{k=1}^d\int_R \partial_{x_{k}} u \, \partial_{x_{k}} v \, dx$, we make the ansatz
\begin{equation}
\label{eq:def_b_tens2}
b_2(u,v)= \sum_{R \in {\cal R}} b_R(u,v) \;, \qquad \
b_R(u,v)= \sum_{k=1}^d b_{R,k}(u,v) ,
\end{equation}
and proceed to define $b_{R,k}(u ,v )$ in a cell-wise manner for any $u, v\in V+\tilde V$ so as to ensure, in view of {\bf ASM2}, that still
$a_{R,k}(v,v) \lsim b_{R,k}(v ,v )$.
To that end, we consider the partition $\cT_\bp(R)$ of $R$ into LGL subcells $S_{\boldsymbol{\ell}} = S_{\boldsymbol{\ell}}(R) = \prod_{k=1}^d I_{k,\ell_k}$ already introduced at the beginning of Section~\ref{sec:conf}.
Next, fixing a constant $C_\textnormal{aspect}>0$, for each $1 \leq k \leq d$, we decompose $\cT_\bp (R)$ into
two parts $\cT_{\bp,k}^{(0)}(R)$, $\cT_{\bp,k}^{(1)}(R)$ defined as follows.
Setting $h_{l,\ell_l}:=\abs{I_{l,\ell_l}}$, we let
\begin{equation}
\label{aspect}
S_{\boldsymbol{\ell}}\in \cT_{\bp,k}^{(0)}(R)\,\quad \mbox{if}\,\quad \frac{ \max_{l\neq k} h_{l,\ell_l} }{h_{k,\ell_k}} > C_\textnormal{aspect},
\qquad S_{\boldsymbol{\ell}}\in \cT_{\bp,k}^{(1)}(R)\,\,
\mbox{otherwise}.
\end{equation}
Thus $\cT_{\bp,k}^{(0)}(R)$ and $\cT_{\bp,k}^{(1)}(R)$ are comprised of ``strongly anisotropic'' and
``sufficiently isotropic" cells, respectively.
We now define
\begin{equation}
\label{eq:defb.b0.b1}
b_{R,k}(u ,v ) = \sum_{S_{\boldsymbol{\ell}} \in \cT_{\bp,k}^{(0)}(R)} b^{(0)}_{R,k,S_{\boldsymbol{\ell}}}(u ,v )
+ \sum_{S_{\boldsymbol{\ell}} \in \cT_{\bp,k}^{(1)}(R)} b^{(1)}_{R,k,S_{\boldsymbol{\ell}}}(u ,v )
\ =: \ b_{R,k}^{(0)}(u ,v ) +b_{R,k}^{(1)}(u ,v ) ,
\end{equation}
where we retain integration in the coordinate with the derivative on anisotropic cells by setting
\begin{align}
b^{(0)}_{R,k,S_{\boldsymbol{\ell}}}(u ,v ):=\sum_{\xi' \in \cF_0(S_{\boldsymbol{\ell},k}')} \omega_{\boldsymbol{\ell},k}'
 \int_{I_{k,\ell_k}} \partial_{x_{k}} u_h (x_k,\xi') \partial_{x_{k}} v_h(x_k,\xi') \, dx_k ,\quad S_{\boldsymbol{\ell}} \in \cT_{\bp,k}^{(0)}(R) \;,
\label{eq:def_b_RkSl_0}
\end{align}
where $u_h:=\cI_{h,\bp}^R u$, $v_h:=\cI_{h,\bp}^R v$ as in Property~\ref{prop:Nhequivalence-multi},
$S_{\boldsymbol{\ell},k}' := \prod_{l=1, l \not = k}^d I_{l, h_l}$,
the weight
\begin{equation}\label{eq:def.omegaprime}
\omega_{\boldsymbol{\ell},k}':= \prod_{l=1, l \not = k}^{d} h_{l,\ell_l} = \vol_{d-1}(S_{\boldsymbol{\ell},k}')
\end{equation}
is equal to the $(d-1)$-dimensional volume of $S_{\boldsymbol{\ell},k}'$, and as before, $\cF_0(S_{\boldsymbol{\ell},k}')$
is the set of all its vertices. Instead, on isotropic cells we employ an inverse inequality to obtain
\begin{align}
b^{(1)}_{R,k,S_{\boldsymbol{\ell}}}(u ,v )
:=\sum_{\xi' \in \cF_0(S_{\boldsymbol{\ell},k}')} \sum_{\xi \in \cF_0(I_{k,\ell_k})}c_{\xi,\xi'}
\frac{\omega_{\boldsymbol{\ell},k}'}{ h_{k,\ell_k}} \, u_h(\xi,\xi') \, v_h(\xi,\xi') ,\quad S_{\boldsymbol{\ell}}\in \cT_{\bp,k}^{(1)}(R),
\label{eq:def_b_RkSl_1}
\end{align}
where as before the $c_{\xi,\xi'}$ are tuning constants of order one which have to be chosen judiciously
in practical applications. To simplify the exposition we shall suppress them in what follows.

In summary, we have
\begin{equation}
\label{def:b_new}
b_2(u,v)=\sum_{R\in \cR}\sum_{k=1}^d\Big(\sum_{S_{\boldsymbol{\ell}} \in \cT_{\bp,k}^{(0)}(R)} b^{(0)}_{R,k,S_{\boldsymbol{\ell}}}(u,v)
+ \sum_{S_{\boldsymbol{\ell}}\in \cT_{\bp,k}^{(1)}(R)}b^{(1)}_{R,k,S_{\boldsymbol{\ell}}}(u,v)\Big),
\end{equation}
where $b^{(i)}_{R,k,S_{\boldsymbol{\ell}}}(u,v)$, $i=0,1$, are defined by \eqref{eq:def_b_RkSl_0}, \eqref{eq:def_b_RkSl_1}, respectively.

\subsection[The operator Q]{The operator $Q$}
It remains to construct a suitable operator $Q: \tilde V = V_{h,D,\bp} \to V= V_\delta^c$ which is now non-trivial because $\tilde V\not\subseteq V$.
Given any $R\in\cR$, it is clear that the trace of a high-order function
from $V_\delta^c$ on any face $F$ of $R$ must have (componentwise) the minimal degree among all the
$\bp(F,R')$ with $R' \in \cR(F)$. To satisfy this condition, we proceed vertex-wise. Precisely, for any vertex $z\in \cF_0(R)$,
let $E_k, k=1,\ldots,d$, be the edges emanating from $z$ and let the {\em minimal vertex degree} be defined as
\begin{equation*}
\bp^*_z := (p^*_1,\ldots,p^*_d)\in \N^d, \qquad \text{with \ } p^*_k = p^*_k(E_k):= \min_{R' \supset E_k} p_k(R') \;.
\end{equation*}
Note that for a given edge $E=[z,y]\in \cF_1(R)$ the degrees
$\bp^*_z$ and $\bp^*_y$ may differ. A proper ``fusion'' can be realized with the aid of the
the {\em element shape functions} $\Phi_z= \Phi_z^R\in \mathbb{Q}_1(R)$, defined by $\Phi_z(z')=\delta_{z,z'}$, $z,z'\in \cF_0(R)$,
that allow us to disentangle first the minimal vertex degrees. In fact, introducing the interpolation operators associated with
the dyadic grids $\cD_p(I)$
\begin{equation}
\label{eq:basic.66}
\cI_{h,D,p}^I : C^0(I) \to V_{h,D,p}(I) \;, \qquad (\cI_{h,D,p}^I v)(\zeta)=v(\zeta)
\quad \forall \zeta \in \cD_p(I) \;, \ \ \forall v \in C^0(I)\;,
\end{equation}
along with their element-wise tensor product
\begin{equation}
\label{def-tens-interp5a}
\cI_{h,D,\bp} = \cI_{h,D,\bp}^R \ : \ C^0(R) \to V_{h,D,\bp}(R) \;, \qquad \cI_{h,D,\bp}^R = \bigotimes_{k=1}^d \cI_{h,D,p_k}^{I_k} ,
\end{equation}
we readily see that if $\tilde v_R \in V_{h,D,\bp}(R)$, then
\begin{equation}
\label{eq:def.vstarz}
\tilde{v}_z^* := \cI^R_{h,D,\bp_z^*} \left( \Phi_z \tilde{v}_R \right) \in V_{h,D,\bp_z^*}(R)
\quad \text{and} \quad \tilde{v}_R^* := \sum_{z \in \cF_0(R)} \tilde{v}_z^* \in V_{h,D,\bp}(R).
\end{equation}
Similarly,
\begin{equation}
\label{eq:def.vstarR}
{v}_z^* = \cI^R_{\bp_z^*} \, \tilde{v}_z^* \in \mathbb{Q}_{\bp_z^*}(R)
\quad \text{and} \quad
Q_R \tilde{v}_R := {v}_R^* := \sum_{z \in \cF_0(R)} {v}_z^* \in \mathbb{Q}_{\bp}(R)
\end{equation}
The desired global operator $Q: \tilde V = V_{h,D,\bp} \to V= V_\delta^c$ can now be defined as
\begin{equation}
\label{def:operQ}
(Q \tilde{v})\mid_{R} :=Q_R \tilde{v}_R = v_R^* \qquad \quad \forall \, R \in \cR \; \qquad \forall \, \tilde{v} \in \tilde{V} .
\end{equation}

\begin{proposition}
\label{prop:continuous}
$Q \tilde{v}$, defined by \eqref{def:operQ}, belongs to $V^c_\delta$.
\end{proposition}
\begin{proof}
First note that for any edge $E \in \cF_1(R)$, one has
\begin{equation}
\label{prop:edgeidentity}
(\tilde{v}_R^*)_{|E}= (\tilde{v}_R)_{|E} .
\end{equation}
In fact, if $E$ has vertices $z_1$ and $z_2$, then
\begin{equation*}
\left( \tilde{v}_{z_1}^* + \tilde{v}_{z_2}^* \right)_{|E} \cI^E_{h,D,\bp^*(E)} \left((\Phi_{z_1}+\Phi_{z_2})\tilde{v}_R \right)_{|E} \cI^E_{h,D,\bp^*(E)} \left(\tilde{v}_R \right)_{|E} = \left(\tilde{v}_R \right)_{|E}
\end{equation*}
since by continuity $(\tilde{v}_R)_{|E} \in V_{h,D,\bp^*(E)}(E)$; on the other hand,
$(\tilde{v}_{y}^*)_{|E} = 0$ for all $y \in \cF_0(R) \setminus \{z_1,z_2 \}$. Thus,
$(\tilde{v}_R^*)_{|E} = \left( \tilde{v}_{z_1}^* + \tilde{v}_{z_2}^* \right)_{|E} =(\tilde{v}_R)_{|E}$
which confirms \eqref{prop:edgeidentity}.

Furthermore, for any interface $F \in \cF_{d-1}$, with $\cR(F)=\{R',R''\}$, one has
\begin{equation}
\label{prop:facecont}
(\tilde{v}_{R'}^*)_{|F}= (\tilde{v}_{R''}^*)_{|F} \qquad \text{and} \qquad
({v}_{R'}^*)_{|F}= ({v}_{R''}^*)_{|F}.
\end{equation}
To see this, let $\cF_0(F)=\cF_0(R')\cap\cF_0(R'')$
be the set of vertices of $F$. Since, by continuity of $\tilde v$, $\tilde{v}_{F} :=(\tilde{v}_{R'})_{|F}=(\tilde{v}_{R''})_{|F}$,
one has $ \big(\Phi_z \tilde{v}_{R'}\big)_{|F}= \Phi_z^F \tilde{v}_{F} = \big(\Phi_z \tilde{v}_{R''}\big)_{|F}$, $z\in \cF_0(F)$.
Hence, denoting by $\bq_z^*=(\bp_z^*)' \in \mathbb{N}^{d-1}$ the reduced degree vector obtained from $\bp_z^*$ by dropping the component in the direction orthogonal to $F$, one concludes that
\begin{align*}
(\tilde{v}_{R'}^*)_{|F}
&=\sum_{z \in \cF_0(F)} \cI^F_{h,D,\bq_z^*} \left( \Phi_z \tilde{v}_{R'} \right)_{|F}
= \sum_{z \in \cF_0(F)} \cI^F_{h,D,\bq_z^*} \left( \Phi_z^F \tilde{v}_{F} \right)\\
&=\sum_{z \in \cF_0(F)} \cI^F_{h,D,\bq_z^*} \left( \Phi_z \tilde{v}_{R''} \right)_{|F}= (\tilde{v}_{R''}^*)_{|F} \;,
\end{align*}
where we have used that $\bq_z^*$ is the same for both $R'$ and $R''$. This confirms the first part of the assertion.
Abbreviating
$\tilde{v}_z^F := \cI^F_{h,D,\bq_z^*} \left( \Phi_z^F \tilde{v}_{F} \right)$,
the second one follows from
$
({v}_{R'}^*)_{|F}= \sum_{z \in \cF_0(F)} \cI^F_{\bq_z^*} \, \tilde{v}_z^F = ({v}_{R''}^*)_{|F}
$,
which completes the proof of \eqref{prop:facecont} and hence of the proposition.
\end{proof}

The main result of this section, whose proof is given in Section~\ref{sec:proof-II}, can be phrased as follows.

\begin{theorem}
\label{th:2}
Let $\mathbf{\tilde A}_2$ be the stiffness matrix with respect to a basis for $\tilde V= V_{h,D,\bp}$ and let $\mathbf{B}_2$ be the
matrix representation of the operator $B_2$ defined by $\langle B_2 v,w\rangle = b_2(v,w)$, $v,w\in V^c_\delta + V_{h,D,\bp}$,
with $b_2(\cdot,\cdot)$ defined by \eqref{def:b_new}.
Finally let $\mathbf{S}_2$ denote the matrix representation of the operator $Q$, defined by \eqref{def:operQ}.
Then, there exists a constant $C_2$ depending only on the grading conditions \eqref{eq:setting.50}, such that,
whenever $\mathbf{C}_{\mathbf{\tilde A}_2}$ is a symmetric preconditioner for $\mathbf{\tilde A}_2$, the matrix
$\mathbf{C}_{\mathbf{\tilde A}_1}:= \mathbf{B}_2^{-1} + \mathbf{S}_2 \mathbf{C}_{\mathbf{\tilde A}_2} \mathbf{S}_2^T$ is a symmetric preconditioner for \eqref{eq:setting.1} satisfying
\begin{equation}
\label{eq:2}
\kappa(\mathbf{C}_{\mathbf{\tilde A}_1} {\mathbf{\tilde A}_1}) \leq C_2 \kappa(\mathbf{C}_{\mathbf{\tilde A}_2} {\mathbf{\tilde A}_2})
\end{equation}
uniformly in $\delta =(\bH,\bp)$, subject to the grading conditions \eqref{eq:setting.50}.
\end{theorem}

Note that, due to the global continuity of the functions in $V^c_\delta + V_{h,D,\bp}$ the matrix
$\mathbf{B}_2$ is no longer diagonal but very sparse. More precisely the sparsity depends quantitatively on
the control of the aspect ratios in \eqref{aspect}. Therefore, $\mathbf{B}_2$ is, of course, not inverted
exactly. A more elaborate discussion of this issue can be found in \cite{brix-thesis,BCD2013}.
The numerical experiments later given in Section~\ref{sec:mumres-II} indicate that the approximate inversion of $\mathbf{B}_2$
is facilitated by efficient smoothing relaxations.

\subsection[A multiwavelet preconditioner for A2]{A multiwavelet preconditioner for $\mathbf{\tilde A}_2$}\label{sec:multiwavelet}

It remains to specify the preconditioner $\mathbf{C}_{\mathbf{\tilde A}_2}$ in Theorem~\ref{th:2}.
Again we stress that such a preconditioner is already of interest for spectral discretization on a single element as well as
for high-order conforming discretizations, independent of the DG context.

On the one hand, we are now in the comfortable situation of dealing with a {\em multilevel hierarchy of nested}
partitions $\cD_\bp(\cR)$ of $\Omega$, obtained as the union of dyadic element partitions $\cD_{\bp(R)}(R)$, $R\in \cR$.
On the other hand, a straightforward application of standard preconditioning concepts is impeded by the fact that the dyadic grids are highly anisotropic.
In fact, the currently known results do not seem to imply that BPX-type techniques would give rise to uniformly bounded
condition numbers.

We therefore propose a strategy that again resorts to the auxiliary space method, this time for the scenario
that $V \subset \tilde V$. The envisaged auxiliary space will be seen to
have uniformly $H^1$-stable splittings in the sense of \cite{GO1995} which, in turn, are obtained in two steps:
\begin{itemize}
\item[(a)]
For each element $R\in \cR$ construct a local $H^1$-stable splitting for the corresponding
finite element space $V_{h,D,\bp}(R)$ subordinate to the dyadic subgrid $\cD_\bp(R)$.
\item[(b)]
Glue the local $H^1$-stable splittings to a global $H^1$-stable splitting for the global conforming
finite element space $V_{h,D,\bp}$.
\end{itemize}
For step (a) we exploit the fact that $V_{h,D,\bp}(R)$ is a tensor product space. For this to work we need a frame or basis for the one-dimensional
factors with the property that properly scaled versions form a frame or basis for $L_2$ and for $H^1$.
To obtain such systems we use certain piecewise linear $L_2$-orthonormal compactly supported multi-wavelets on an interval.
In particular, the boundary conditions of those multi-wavelets allows us to glue them across end points of adjacent intervals in such a
way that they still give rise to globally continuous $L_2$-orthonormal wavelets on the unions of such intervals.
This is the essential prerequisite for step (b).

\subsubsection{Piecewise affine orthogonal multiwavelets on an interval}
We briefly recall first the piecewise affine orthogonal multiwavelets on all of $\R$ constructed in \cite[Example in Section 3.1]{DGH1996}.
For our purpose it is important to relate them to the standard piecewise linear multiresolution spaces $(\check S_{j})_{j \in \Z}$ on the
dyadic grids $2^{-j}\Z$. Specifically, $\check S_{j}$ is obtained by dilating the elements of $\check S_{0}$
by $2^j$, which itself is generated by the integer translates of the standard piecewise linear hat function with integer knots.
An intertwining technique yields then a new multiresolution analysis of subspaces of piecewise linear functions $S_i$ that are spanned by three orthogonal scaling functions $\phi^0$, $\phi^1$, and $\phi^2$ in such a way that
\begin{equation}\label{eq:intertwinedMRA}
\check S_{i+1} \subset S_i \subset \check S_{i+2}.
\end{equation}
The orthogonal complements $W_j := S_{j+1}\ominus S_j$ can be shown to be spanned
by translated and dilated versions of three orthogonal wavelet functions $\psi^0$, $\psi^1$, and $\psi^2$, i.e.,
\begin{equation*}
W_j:=\overline{ \spn \{ \psi^i_{[j,k]} : i \in \{0,1,2\}, k \in \Z \}}.
\end{equation*}
Here we denote the translated and dilated version of a function $f \in L_2(\R)$ by $f_{[j,k]}:=2^{j/2} f(2^j \cdot -k)$.
For convenience we gather in the following the indices of a translated and dilated scaling function or wavelet in a single multilevel index $\lambda=(i,j,k)$, where $\abs{\lambda}:=j$ denotes the level of $\lambda$.
With the convenient definition $W_{-1}:=V_{0}$ we have the orthogonal decomposition $L_2(\R) = \oplus_{j=-1}^{\infty} W_j$.
The wavelets have two vanishing moments which is optimal for piecewise affine orthogonal wavelets.

The scaling functions $\phi^i$ and wavelets $\psi^i$ in the above setting are supported on the interval $[-1,1]$. Their graphs are depicted in Fig.~\ref{fig:DGH1_plot}.

\begin{figure}[ht]
\subfloat{\includegraphics[width=0.3\linewidth]{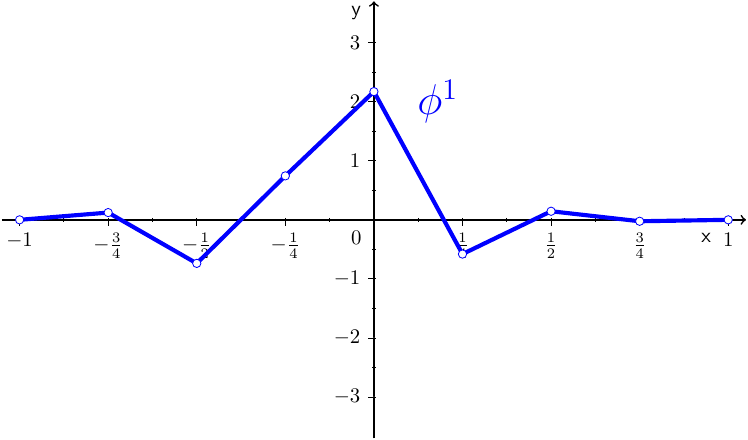}}\hfill
\subfloat{\includegraphics[width=0.3\linewidth]{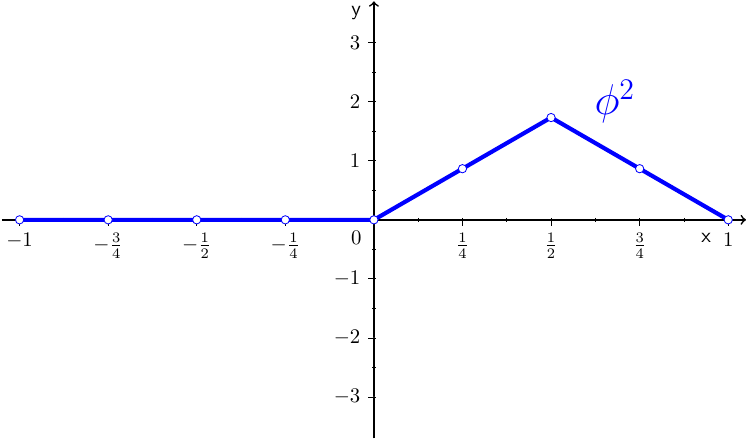}}\hfill
\subfloat{\includegraphics[width=0.3\linewidth]{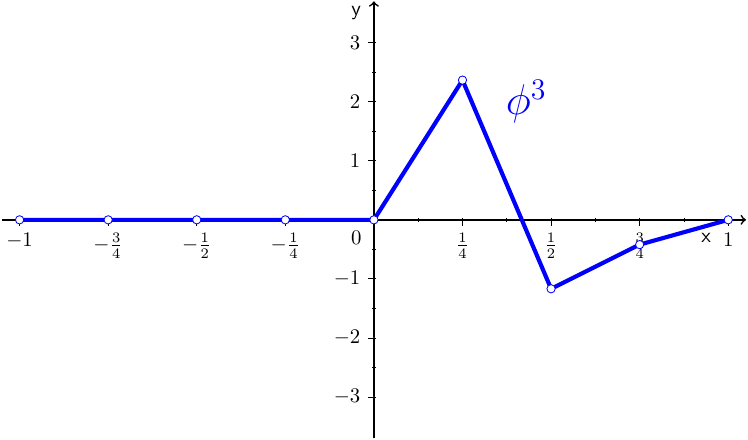}}

\subfloat{\includegraphics[width=0.3\linewidth]{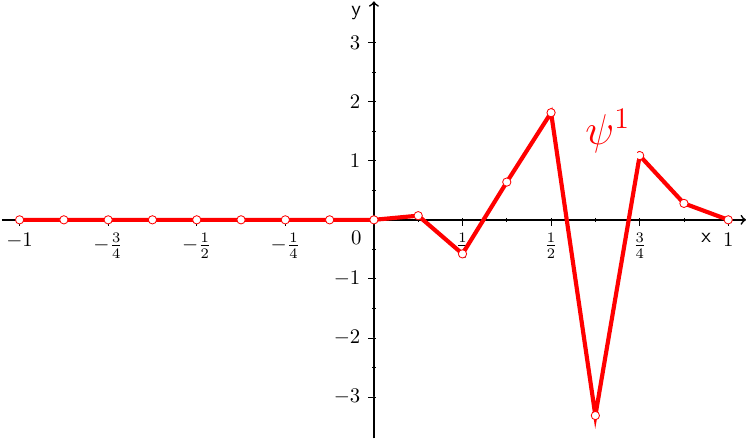}}\hfill
\subfloat{\includegraphics[width=0.3\linewidth]{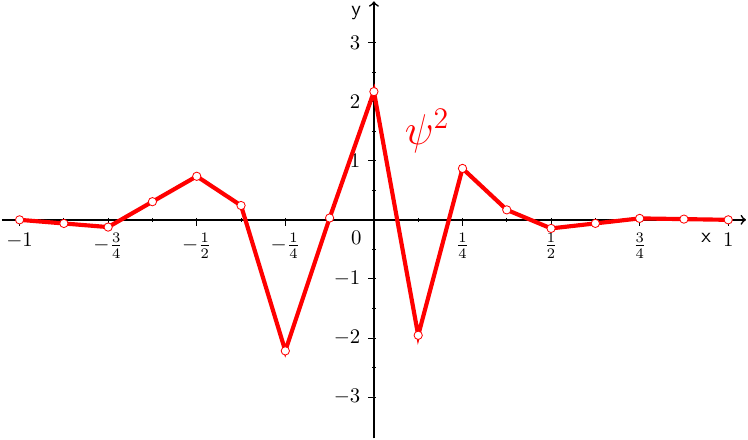}}\hfill
\subfloat{\includegraphics[width=0.3\linewidth]{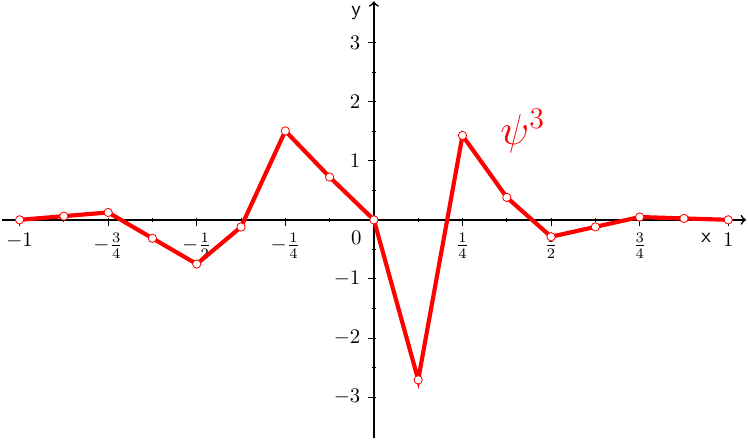}}
\caption{Piecewise affine orthogonal multiwavelets (\cite{DGH1996}, blue: scaling functions, red: wavelets)}
\label{fig:DGH1_plot}
\end{figure}

The above construction is geared to facilitate the construction of an orthonormal wavelet basis $\Psi=\left(\psi_\lambda\right)_{\lambda \in \Delta}$ for $L_2([0,1])$, essentially by restriction to the interval $[0,1] \subset \R$, see~\cite{DGH1999} for details.
The multilevel basis on $[0,1]$ consists essentially of all translated and dilated scaling functions and wavelets with support inside the interval complemented by appropriate boundary scaling functions and wavelets. These boundary functions are obtained as restrictions of linear combinations of the scaling functions and wavelets with support exceeding the interval boundaries
to $[0,1]$.
Moreover, for each end point of the interval there is exactly one boundary scaling function and one wavelet that vanishes at that end point, see Figures~\subref*{fig:DGH2_plot_boundaryN_S} and~\subref{fig:DGH2_plot_boundaryN_W}.
This is important for efficiently ``gluing'' local scaling functions and wavelets on $R\in\cR$ to globally conforming ones on $\Omega$. For those elements $R$
which intersect $\partial\Omega$ one has to incorporate Dirichlet boundary conditions. For the scaling functions this simply amounts
to omitting the single scaling function that does not vanish at the respective end point. The boundary wavelet needs to be modified somewhat
as shown in Fig.~\subref*{fig:DGH2_plot_boundaryD_W}.

With the analogous definition of multiresolution spaces $V_j$ and corresponding orthogonal complement spaces $W_j:=\overline{ \spn \{ \psi_\lambda: \lambda \in \Delta_j\} }$, this time on the
interval $[0,1]$,
we have the orthogonal decomposition $L_2([0,1]) = \bigoplus_{j=-1}^{\infty} W_j$. By a suitable affine change of variables
and adjusted scaling one obtains
orthogonal decompositions of $L_2([a,b])$ for any interval $[a,b] \subset \R$.

\vskip -.6cm
\begin{figure}[h]
\subfloat[\label{fig:DGH2_plot_boundaryN_S}]{\includegraphics[width=0.25\linewidth]{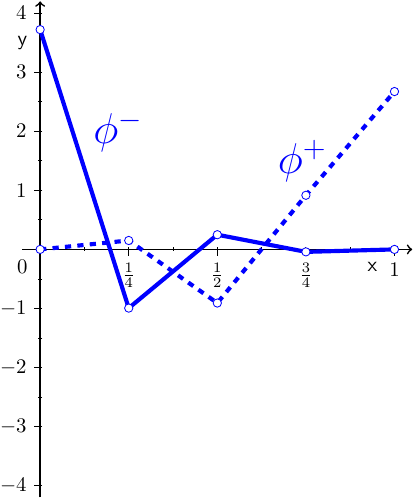}}\hfill
\subfloat[\label{fig:DGH2_plot_boundaryN_W}]{\includegraphics[width=0.25\linewidth]{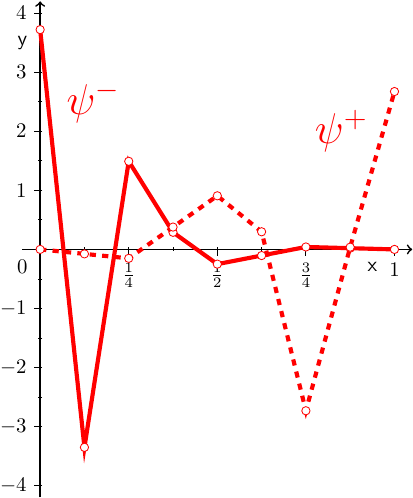}}\hfill
\subfloat[\label{fig:DGH2_plot_boundaryD_W}]{\includegraphics[width=0.25\linewidth]{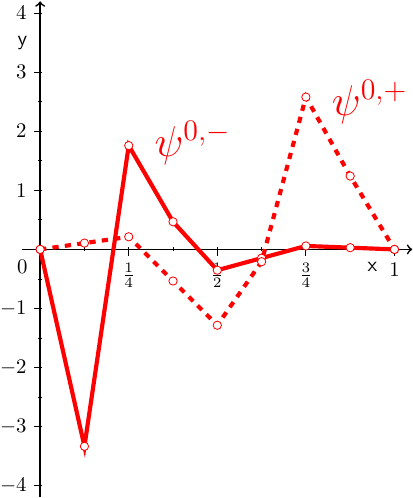}}
\caption{Boundary scaling functions and wavelets (\cite{DGH1999}); blue: scaling functions, red: wavelets; solid line: left boundary, dashed line: right boundary); (a) and (b): Neumann boundary conditions, (c): zero Dirichlet boundary conditions.}
\label{fig:DGH2_plot_boundary}
\end{figure}

\subsubsection{Constructing an auxiliary multilevel space}\label{sect:aux-multilevel}

We construct next an auxiliary space for $V_{h,D,\bp}$ that permits $H^1$-stable multilevel decompositions.
According to (a) it is based on local tensor product spaces. To define these local tensor product spaces consider
first an arbitrary interval $I=[a,b]\subset \R$ along with
a dyadic meshes $\cD_p={\bf NestedDyadic}[\cG_p,\{ I \},\alpha]$, which are nested when $p$ increases and graded.
Recall that $V_{h,D,p}(I)$ denotes the corresponding piecewise linear finite element space spanned by
the piecewise affine hat functions $(\theta_\zeta)_{\zeta\in \cD_p(I)}$, i.e., $\theta_\zeta(\zeta')=\delta_{\zeta,\zeta'}$, $\zeta, \zeta'\in\cD_p(I)$.

In view of \eqref{eq:intertwinedMRA},
$V_{h,D,p}(I)$ does not equal exactly a span of scaling functions from the spaces $V_j$.
Also the grid $\cD_p(I)$ is non-uniform. To obtain a possibly small space $W_{h,D,p}(I)$, spanned by wavelets in $\Psi$, that {\em contains} $V_{h,D,p}(I)$,
we consider first the set
\begin{equation}
\hat \Lambda := \left\{\lambda \in \Delta: \text{there is a} \ \zeta \in \cD_p(I)
\ \text{such that} \ (\psi_\lambda, \theta_\zeta)_{L_2(0,1)} \ne 0 \right\},
\end{equation}
that need to be activated in order to span $V_{h,D,p}(I)$.
Note that because of \eqref{eq:intertwinedMRA}
the set $\hat \Lambda$ is finite. Moreover, because all $\theta_\zeta$ are piecewise affine functions and the wavelets are compactly supported
with two vanishing moments, $\hat\Lambda$ ``adapts'' the non-uniform nature of the grid $\cD_p(I)$. In particular,
the maximal level occurring in $\hat \Lambda$ is finite, i.e.,
$ {|\lambda|} \le j_\text{max}$ {for all $\lambda \in \hat \Lambda$}, where $j_\text{max}=j_\text{max}(p,\alpha)$.

In order to facilitate an efficient transformation of linear combinations of the hat functions $(\theta_\zeta)_{\zeta\in \cD_p(I)}$
in $V_{h,D,p}(I)$
into a linear combination of wavelets one may possibly have to augment $\hat\Lambda$ somewhat
to a set $\Lambda$ which is the smallest ``multilevel tree'' that contains $\hat \Lambda$ and set
$\Lambda_j:=\{\lambda \in \Lambda: \abs{\lambda}=j\}$.
By this we mean that, when representing all wavelets with indices in $\Lambda$ in terms of scaling functions, the
encountered scaling functions on level $j$ all appear in the two-scale relation of the active scaling functions on level $j-1$.
The corresponding wavelet-transform then exhibits linear complexity which is important for the efficiency of the
preconditioner based on wavelet representation of the stiffness matrix.

Defining for $-1 \le j \le j_\text{max}$ the spaces $W_{h,D,p,j}(I):=\spn\{ \psi_\lambda : \lambda \in \Lambda_j \}$,
we obtain the $L_2$-orthogonal decomposition
\begin{equation*}
W_{h,D,p}(I):=\spn\{ \psi_\lambda : \lambda \in \Lambda \}= \bigoplus_{j=-1}^{\infty} W_{h,D,p,j}(I).
\end{equation*}
Note that since $\cD_p$ is graded and because of \eqref{eq:intertwinedMRA}, the grid underlying $W_{h,D,p}(I)$ is a local refinement of $\cD_p(I)$ by at most one level.
Furthermore, nestedness is preserved, i.e., $W_{h,D,p}(I) \subseteq W_{h,D,p+1}(I)$ for all $p$.

Given any $R = \bigtimes_{k=1}^d I_k\in \cR$, we define, according to step (a), the local
auxiliary space as
\begin{equation}
\label{def-tens-interp4a}
\tilde V(R) = W_{h,D,\bp}(R) := \bigotimes_{k=1}^d W_{h,D,p_k}(I_k),
\end{equation}
which leads to
\begin{equation}
\label{def-tens-interp5b}
\tilde V = W_{h,D,\bp}:= \{w\in H^1_0(\Omega): w\!\mid_{R}\in W_{h,D,\bp}(R),\, R\in \cR\},
\end{equation}
as the auxiliary space $\tilde V \supset V = V_{h,D,\bp}$ used in the ASM.
By construction, the dimension of $\tilde V$ remains uniformly proportional to ${\rm dim}\,V_{h,D,\bp}$.

For the construction of $H^1$-stable splittings for $\tilde V$ we wish to identify next an $L_2$-orthogonal wavelet basis
along with the corresponding scaling function basis for $\tilde V= W_{h,D,\bp}$ composed of the local wavelet bases for each $W_{h,D,\bp}(R)$. The basic principle is to properly
``glue'' those scaling functions and wavelets across an element interface which do not vanish on that interface.
Due to the properties of the interval-adapted scaling functions and wavelets, mentioned above, one can follow essentially the lines of
\cite{CTU1999,DS1999}. A small difference to be perhaps addressed is the fact
that the dyadic grids may vary from element to element and hence the restrictions of dyadic grids at an (inner) element interface
$F\subset \Omega$ do not agree.
However, since grids are nested the common minimal grid -- the intersection -- $\cD^*(F)$ belongs to all participating grids. Moreover, global continuity
of the functions in $V_{h,D,\bp}$ implies that the traces of the local functions from the adjacent elements agree on $F$ and belong to the
finite element space induced by $\cD^*(F)$. Let us denote by $W(\cD^*(F))$ the trace space obtained by the intersection of the spaces
$W_{h,D,\bp}(R)$ such that $F\in \cF(R)$.
The effect of the gluing process is now as follows. Any global scaling basis function or wavelet in $W_{h,D,\bp}$
which does not vanish on $F$, and hence has part of its support in all adjacent elements, has the following property: its restriction to
any of the adjacent elements, whenever being nontrivial, is a tensor product of a scaling function or wavelet belonging to the trace space
$W(\cD^*(F))$ and a boundary scaling function or wavelet in the complementary variables which do not vanish on $F$.
In short, the global basis, which we refer to as {\em composite basis},
consists of basis functions supported in a single element and on
basis functions that are glued across element interfaces with factors from the intersection spaces $W(\cD^*(F))$. Moreover, it is important to note
that the restriction of the global wavelets to any element $R\in\cR$ is still an $L_2$-orthogonal
basis for $L_2(R)$ where the basis functions affected by the restriction still have norms that are uniformly equivalent to one.
Finally, the (local and global) transformation from elements in $V_{h,D,\bp}$ into wavelet representation has linear complexity.
It remains to confirm that $\Psi_\Omega$ gives rise to uniformly $H^1$-stable splittings for $\tilde V$. On account of the
above restriction properties and the set-additivity of the $H^1$-norm it suffices to show that the local wavelet bases for
the tensor-product spaces $W_{h,D,\bp}(R)$ give rise to uniformly $H^1(R)$-stable splittings.

To that end, we can now apply the theory of tensorial subspace splittings in its anisotropic variant. In fact, suitably scaled versions
of the univariate wavelet bases on an interval are uniformly $L_2$- respectively $H^1$-stable.
Thus, by Theorem 1 in \cite{GO1995a} we know that on any $R = \bigtimes_{k=1}^d I_k \in \cR$
we have an $H^1$-stable splitting {of $W_{h,D,\bp}(R)$ into the components }
$W_{j_1,\ldots,j_d} := \bigotimes_{k=1}^d W_{h,D,p_k,j_k}(I_k)$
\begin{align}
\label{eq:scaling}
a_R(u,u) \ \simeq \
\inf
\sum_{\ j_1,\ldots,j_d \ge -1} (2^{2j_1}+\cdots + 2^{2j_d}) (u_{j_1\ldots,j_d}, u_{j_1\ldots,j_d})_R \;,
\end{align}
where the infimum is taken over all representations of $u$ as $u = \displaystyle{\sum_{j_1,\ldots,j_d \ge -1}} u_{j_1,\ldots,j_d}$
with $u_{j_1,\ldots,j_d} \in W_{j_1,\ldots,j_d}$.
Since the wavelets form a basis the representations in the splittings are actually unique therefore providing
an $H^1(R)$-stable splitting on each element $R\in \cR$.

We again apply the auxiliary space method.
In fact, in terms of the terminology of Section~\ref{sec:asm} we now have $\tilde V=\hat V$, $a=\tilde a=\hat a$ and $\tilde Q: V \rightarrow \tilde V$
can be taken as the canonical embedding of $V$ into $\tilde V$. Then \eqref{eq:asm_a_equiv_ahat} and the inequalities in \eqref{eq:asm_bound_Qtilde} and \eqref{eq:asm_approx_Qtilde} concerning $\tilde Q$ are obviously satisfied.

Since $V=V_{h,D,\bp}\subset \tilde V= W_{h,D,\bp}$ we could
look for an appropriate operator $Q$ which possesses a right inverse. This would allow us to dispense with an additional smoothing bilinear form in this stage.
For simplicity of exposition we instead apply again
Proposition~\ref{prop:asm1}, i.e. we define the form $b_3(\cdot,\cdot)$ in analogy to
\eqref{eq:def_b_tens2}-\eqref{def:b_new},
(i) replacing $V_{\bp}(R)$ by $V_{h,D,\bp}(R)$ and
(ii) substituting the underlying LGL grid by the dyadic grid and
introducing the corresponding intervals $S_{D,\ell}$, subcells $I_{D,k,\ell_k}$ and, in analogy to \eqref{aspect},
sets $\cT_{D,\bp}^{(i)}$ for $i \in \{0,1\}$ .

Moreover, also $Q$ can be defined in complete analogy to \eqref{def:operQ},
(i) replacing $V_{h,D,\bp}(R)$ in \eqref{eq:def.vstarz} by $W_{h,D,\bp}(R)$ with the same meaning of the degree vectors $\bp_z^*$, $z\in \cF_0(R)$;
(ii) employing hat function representations of the elements of $W_{h,D,\bp}(R)$;
(iii) replacing the operator $\cI^R_{\bp_z^*}$ in \eqref{eq:def.vstarR} by the dyadic interpolation operator $\cI^R_{h,D,\bp_z^*}$.
Since the elements of the composite space $W_{h,D,\bp}$ are continuous over $\Omega$
one shows in exactly the same way as in Proposition~\ref{prop:continuous} that $Q\tilde v \in H^1(\Omega)$ for any $\tilde v\in \tilde V=W_{h,D,\bp}$.

The following lemma asserts the uniform $H^1$-stability of $Q$ and the corresponding direct estimate, hence the validity of the ASM conditions.

\begin{lemma}
\label{lem:Q}
Let $Q:\tilde V \rightarrow V$ be the operator defined above.
Then one has
\begin{align}\label{eq:Q3bound}
\norm{Q \tilde v}_{H^1} \lesssim \norm{\tilde v}_{H^1} \quad \text{for all} \quad \tilde v \in \tilde V
\end{align}
and
\begin{align}\label{eq:Q3approx}
b(\tilde v - Q \tilde v, \tilde v - Q \tilde v) \lesssim \hat a(\tilde v, \tilde v) \quad \text{for all} \quad \tilde v \in \tilde V,
\end{align}
where the constants are independent of the discretization parameters $\delta=(\bH,\bp)$.
\end{lemma}

The proof is a consequence of various stability estimates provided in Section~\ref{sec:proof-II}
and is therefore deferred to that section (see Remarks~\ref{rem:Q} and~\ref{rem:Qapprox}).

To build a preconditioner $\mathbf{C}_{\mathbf{\tilde A}_2}$ for the conforming problem over $V_{h,D,\bp}$ let $\mathbf{S}_3$ and $\mathbf{B}_3$ denote the matrix representations of the operator $Q$ and the auxiliary bilinear form $b_3$, referred to in Lemma~\ref{lem:Q}, respectively.
In the iterative solver we need to efficiently apply the stiffness matrix with respect to a {properly scaled wavelet basis for $W_{h,D,\bp}$.
Specifically, the wavelets are normalized so that their $H^1$-norms are uniformly equivalent to one.
However, this matrix is not sparse and is therefore never assembled. Instead, we only assemble the stiffness matrix with respect to
the corresponding scaling function basis which, in turn, can be efficiently obtained from the standard stiffness matrix with respect to a (slightly refined)
hat-function basis. As usual, the application of the wavelet representation is then realized by first applying the fast wavelet transform
in cascadic fashion, then applying the sparse scaling function representation followed by the inverse wavelet transform.
The overall complexity remains uniformly proportional to the dimension of the original problem.
To be more precise}, let $\mathbf{T}$ be the matrix representing the inverse multiwavelet transform taking the scaled wavelet representation into a scaling function representation, see \eqref{eq:scaling}.
Note that {the restriction of $\mathbf{T}$ to an element $R\in\cR$} is a tensorial operator, that can be applied dimension-wise. Moreover,
the {transformation} matrix
$\mathbf{T}$ itself is not assembled but, as indicated above, applied in cascadic fashion which {overall exhibits the linear complexity of the fast wavelet transform}.
Let us denote by $\mathbf{\tilde A}_2^\Phi$ the stiffness matrix with respect to the local scaling function basis of a space $\hat W_{h,D,\bp}$, that is slightly larger than $W_{h,D,\bp}$.
Note that due to the grading of $\cD_\bp$, the dyadic grid underlying $\hat W_{h,D,\bp}$ is a local refinement of that of $W_{h,D,\bp}$ by finitely many levels.
Since the scaling functions are piecewise (multi-)affine functions on a uniform grid, where the nodal values are known, the sparse stiffness matrix $\mathbf{\tilde A}_2^\Phi$ can easily be {assembled}.
Then the stiffness matrix $\mathbf{\tilde A}_2^\Psi$ with respect to the corresponding wavelet basis of $W_{h,D,\bp}$
is given as $\mathbf{\tilde A}_2^\Psi=\mathbf{T}^T \mathbf{\tilde A}_2^\Phi \mathbf{T}$ and can be {applied} in the way described above at the expense of
$\cO({\rm dim}\,(V_{h,D,\bp}))$ operations.
Finally let ${\sf CG}$ stand for a fixed number of conjugate gradient iterations applied to $\mathbf{\tilde A}_2^\Psi$.

As before for $\mathbf{B}_2$, the matrix $\mathbf{B}_3$ of the smoothing operator is not inverted exactly, but we shall approximate the inverse of $\mathbf{B}_3$ along the same lines as the inverse of $\mathbf{B}_2$.

The above findings can now be summarized as follows.

\begin{theorem}
\label{thm:tildeA2-prec}
Adhering to the above notation the composite preconditioner
\begin{equation}
\label{eq:tA2-prec}
\mathbf{C}_{\mathbf{\tilde A}_2} := \mathbf{B}_3^{-1} + \mathbf{S}_3 \ {\sf CG}\big(\mathbf{T}^T \mathbf{\tilde A}_2^\Phi \mathbf{T}\big) \ \mathbf{S}_3^T
\end{equation}
satisfies $\kappa(\mathbf{C}_{\mathbf{\tilde A}_2} {\mathbf{\tilde A}_2})= \cO(1)$,
uniformly in the discretization parameters $\delta=(\bH,\bp)$.
\end{theorem}

Employing the approximate inversion of $\mathbf{B}_3$ in analogy to the treatment of $\mathbf{B}_2$ the application of $\mathbf{C}_{\mathbf{\tilde A}_2}$ requires $\cO({\rm dim}\,(V_{h,D,\bp}))$ operations.

\subsection[Numerical experiments concerning the preconditioner C A1]{Numerical experiments concerning the preconditioner $\mathbf{C}_{\mathbf{\tilde A}_1}$}\label{sec:mumres-II}

We discuss next efficient strategies for applying
$\mathbf{C}_{\mathbf{\tilde A}_1}= \mathbf{B}_2^{-1} + \mathbf{S}_2 \mathbf{C}_{\mathbf{\tilde A}_2} \mathbf{S}_2^T$
introduced in Theorem~\ref{th:2}.

\subsubsection[Approximate inversion of B2 with C A2 = inverse(A2)]{Approximate inversion of $\mathbf{B}_2$ with $\mathbf{C}_{\mathbf{\tilde A}_2} = \mathbf{\tilde A}_2^{-1}$}\label{subsec:ApproxInvB2}
The matrix $\mathbf{B}_2$ for the smoothing operator is very sparse but no longer diagonal.
In Fig.~\subref*{fig:SparsityB2singlepatch}
this is illustrated for $d=2$ and a patch with polynomial degree $p=25$ in both directions. For a detailed
discussion of exploiting the sparsity we refer to \cite{brix-thesis,BCD2013}.
\begin{figure}[ht]
\subfloat[before reordering]{\includegraphics[width=0.24\linewidth]{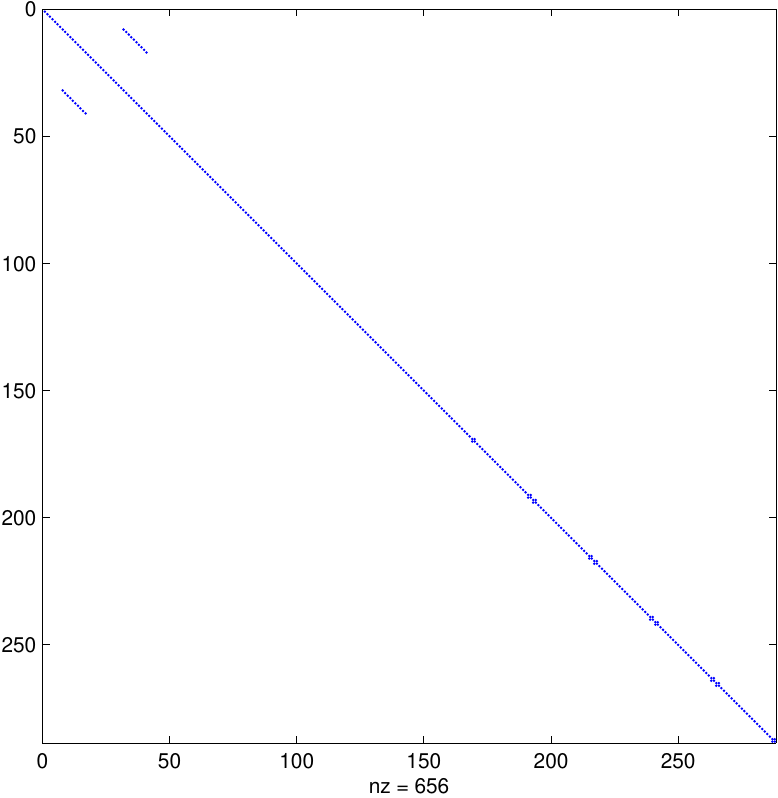}\label{fig:SparsityB2singlepatch}}\hfill
\subfloat[after reordering]{\includegraphics[width=0.24\linewidth]{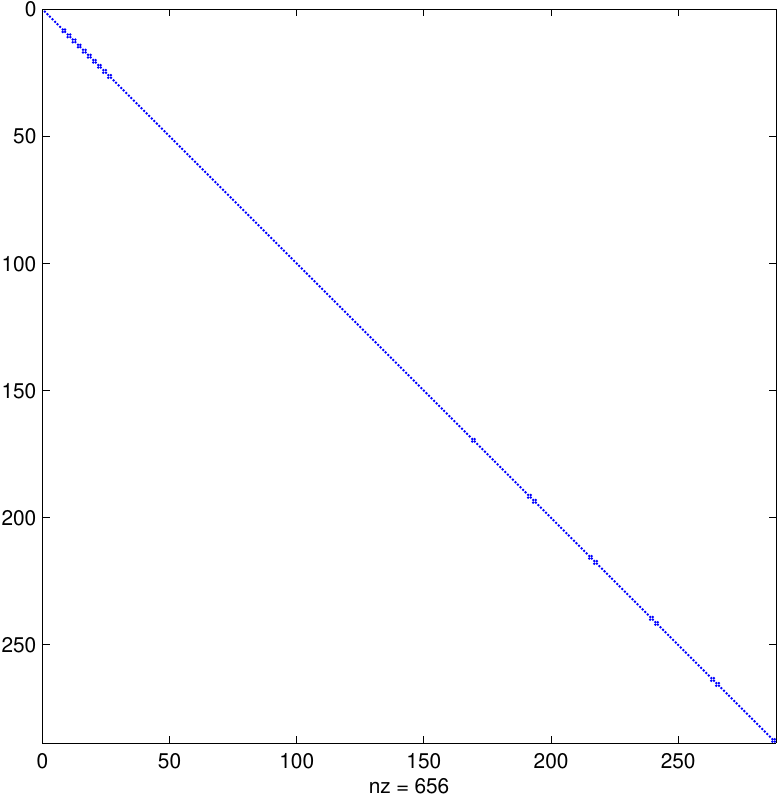}\label{fig:SparsityB2singlepatch_ordered}}\hfill
\subfloat[before reordering]{\includegraphics[width=0.24\linewidth]{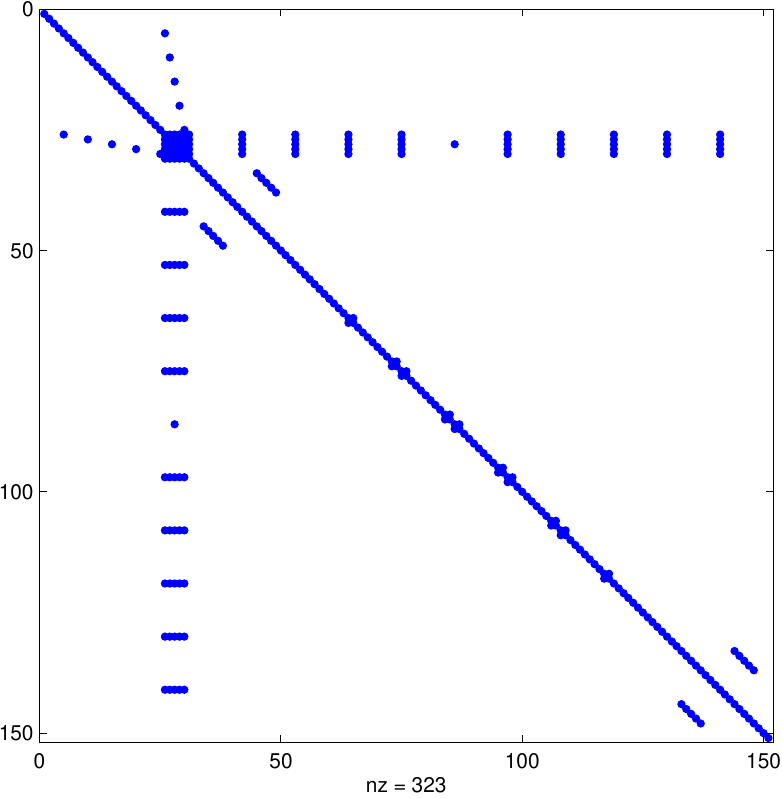}\label{fig:SparsityB2multipatch}}\hfill
\subfloat[after reordering]{\includegraphics[width=0.24\linewidth]{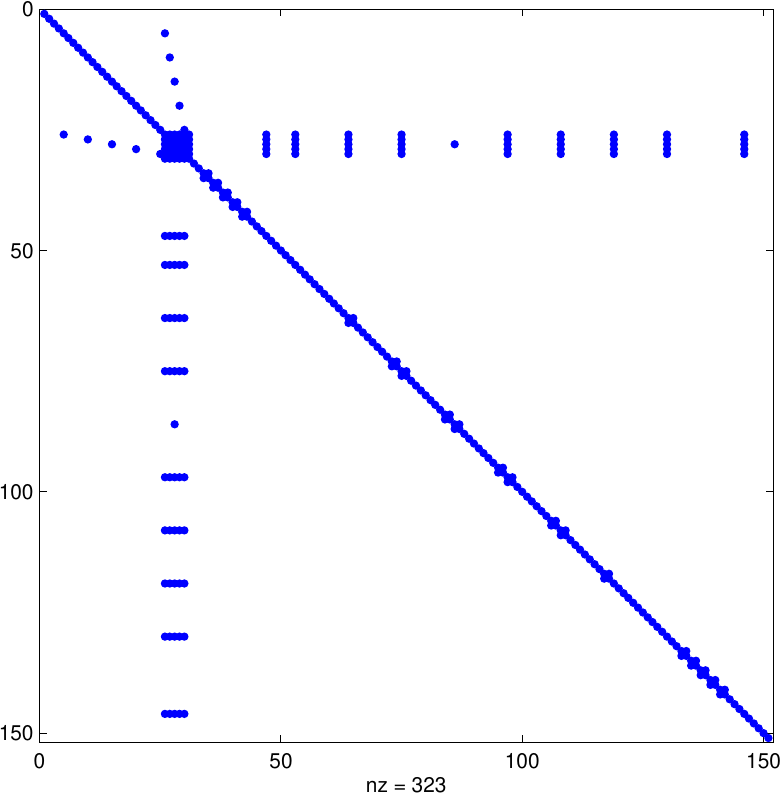}\label{fig:SparsityB2multipatch_ordered}}
\caption{Sparsity patterns for patch-inner nodal functions:
(a)-(b) top left quarter of the matrix $\mathbf{B}_2$ of $p=25$ on a single patch.
(c)-(d) Matrix $\mathbf{B}_2$ on two patches with polynomial degrees $6 \times 6$ and $12 \times 12$.}
\label{fig:SparsityB2}
\end{figure}

We compare below the exact inversion of $\mathbf{B}_2$ and an alternating Gauss-Seidel relaxation
over the skeleton of $\cR$ and a block elimination for each $R\in \cR$ in the spirit of substructuring methods.
Before we need to fix the parameter $\alpha$ in the dyadic grid generation, the aspect ratio control $C_{\rm aspect}$
in the definition of $b_2(\cdot,\cdot)$, monitoring the influence of the inverse estimates, and the tuning constant $c_{\xi, \xi'} \sim 1$.
$\alpha=1.2$ appears to be a good compromise
yielding sufficiently rich auxiliary spaces $V_{h,D,\bp}$ while keeping their dimension close to that of $V_\delta$.
In subsequent experiments we use $C_{\rm aspect}=2$ since larger values turn out to more oscillations in the estimates for
the condition numbers. Due to a somewhat stronger variation in the condition numbers, the choice of $c_{\xi, \xi'}$ is less clear.
Based on extensive experiments we set $c_{\xi, \xi'}=0.6$.

\begin{figure}[b!]
\centering
\subfloat[First test scenario\label{fig:resultsASM2a}]{\includegraphics[height=0.375\linewidth]{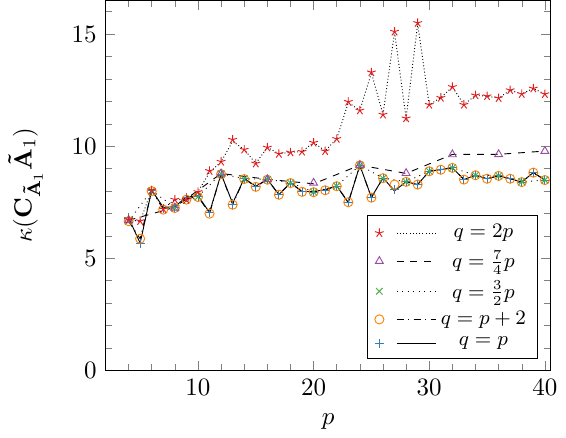}}\hspace{0.01\linewidth}
\subfloat[Second test scenario\label{fig:resultsASM2b}]{\includegraphics[height=0.375\linewidth]{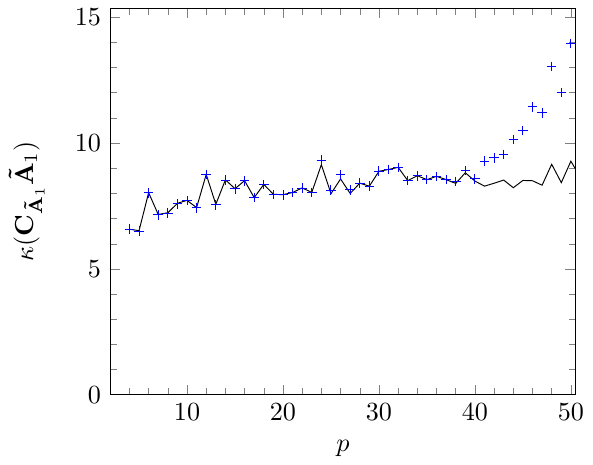}}
\caption{Condition numbers $\kappa({\bf C_{\tilde A_1}} {\bf \tilde A_1})$ obtained for ${\bf C_{\tilde A_1}}= \mathbf{B}_2^{-1} + \mathbf{S}_2 \mathbf{C}_{\mathbf{\tilde A}_2} \mathbf{S}_2^T$ with $\mathbf{C}_{\mathbf{\tilde A}_2} = \mathbf{\tilde A}_2^{-1}$.}
\label{fig:resultsASM2}
\end{figure}

Figures~\subref*{fig:resultsASM2a} and~\subref*{fig:resultsASM2b} show the condition numbers $\kappa({\bf C_{\tilde A_1}} {\bf \tilde A_1})$ for the first and second test scenario, respectively, when $\mathbf{\tilde A}_2$ is inverted exactly.
The plot marks indicate the condition numbers obtained by approximately
solving {the smoothing problem} with the aid of $7$ iterations of the substructuring method.
In contrast, in both plots the solid lines represent the results when the smoothing problem is solved exactly using a direct method.

For the first test scenario the condition numbers are presented for $p$ ranging from $4$ to $40$.
In the cases (i) $q=p$, (ii) $q=p+2$ and (iii) $q=3/2p$ the condition numbers vary only mildly, probably
due to the
discrete process of dyadic grid generation. In contrast, we observe larger condition numbers for larger jumps
between the polynomial degrees on adjacent elements , see the cases (iv) $q=7/4p$ and, in particular, (v) $q=2p$.
For a more thorough analysis and an explanation for these effects we refer
to~\cite{BCD2012}.

One observes enhanced oscillations when specific polynomial degrees are present in the grid, e.g. for odd polynomial degrees close to $27$ which we attribute to a resonance effect between the LGL and dyadic grids. More detailed investigations reveal that similar oscillations are also observed for some even polynomial degrees, e.g. $50$. Moreover, the resonance can be shifted to nearby polynomial degrees when the parameter $\alpha$, controlling the associated dyadic grids, is varied.
Therefore, our tests should be viewed as a guide for favorable choices of the dyadic grid generation parameters $\alpha$
depending on the employed polynomial degrees.

For the second test scenario, which is perhaps more relevant for practical applications than the first one,
we investigate $p$ in the range of $4$ to $50$.
When the smoothing problem is solved exactly, we observe that,
aside from some oscillations due to the grid resonance effect, the condition numbers grow in essence mildly for small polynomial degrees and quickly tend to a limit below $10$.
When solving the smoothing problem only approximately by a fixed number of substructuring iterations, we observe
increasing condition numbers for polynomial degrees larger than $41$.
This issue will be examined more closely in forthcoming work.

\subsubsection[The multi-wavelet preconditioner C A2]{The multi-wavelet preconditioner $\mathbf{C}_{\mathbf{\tilde A}_2}$}

In the more relevant second test scenario we compare the effect of an exact solution of the smoothing problem with a substructuring method for $\mathbf{B}_3$ performing alternating Gauss-Seidel relaxation over the skeleton of $\cR$ and a block elimination for each $R\in \cR$. We always apply $7$ iterations in the substructuring scheme and set $C_\textnormal{aspect}=2.0$ and $c_{\xi, \xi'}=0.6$.

Our first observation is that the condition number of the multi-wavelet stiffness matrix $\mathbf{\tilde A}_2^\Psi$ is approximately $\kappa(\mathbf{\tilde A}_2^\Psi) \approx 60$, independent of the underlying polynomial degree.
We observe that
after approximately $30$ iterations of the CG algorithm a desired absolute residual tolerance of $10^{-6}$ is reached.

Fig.~\ref{fig:resultsASM3} shows that the condition numbers produced by $\mathbf{C}_{\mathbf{\tilde A}_2}$ change abruptly when the polynomial degree varies. This seems to be again due to the discontinuous nature of dyadic grid generation.

Moreover, we see that for $4 \le p \le 50$ the condition numbers are below $12.5$ in all cases.
Finally, note that the substructuring method works better on $\mathbf{B}_3$ than on $\mathbf{B}_2$. Using the same number of iterations as before, there is no visible difference in the results between exact and approximate inversion of $\mathbf{B}_3$, which is probably due to the much more local coupling of the macro elements in $\mathbf{B}_3$.

\begin{figure}[t!]
\centering
\includegraphics[height=0.375\linewidth]{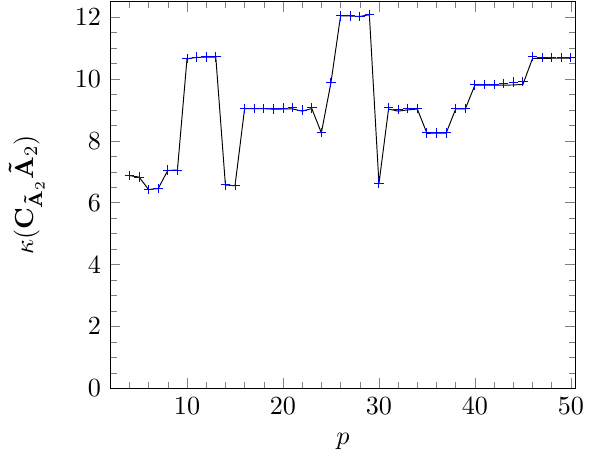}
\caption{Condition numbers $\kappa(\mathbf{C}_{\mathbf{\tilde A}_2} \mathbf{\tilde A}_2)$
produced by the multi-wavelet preconditioner $\mathbf{C}_{\mathbf{\tilde A}_2}$ in the second test scenario, where the smoothing problem is solved exactly (solid line) or approximately using the substructuring method (blue marks).
\label{fig:resultsASM3}}
\end{figure}

\section{The composite preconditioner}\label{sec:comp}

\begin{figure}[t!]
\centering
\subfloat[First test scenario\label{fig:resultsASM12a}]{\includegraphics[height=0.375\linewidth]{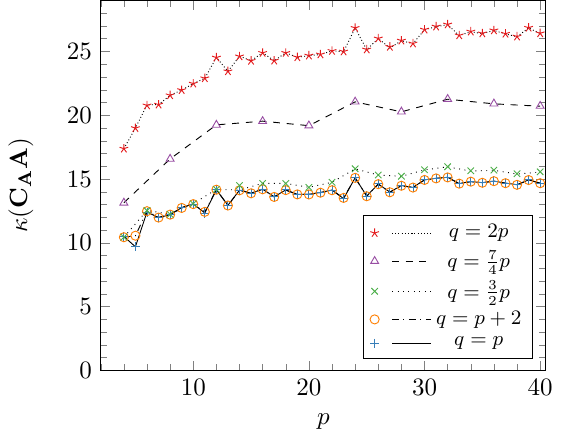}}\hspace{0.01\linewidth}
\subfloat[Second test scenario\label{fig:resultsASM12b}]{\includegraphics[height=0.375\linewidth]{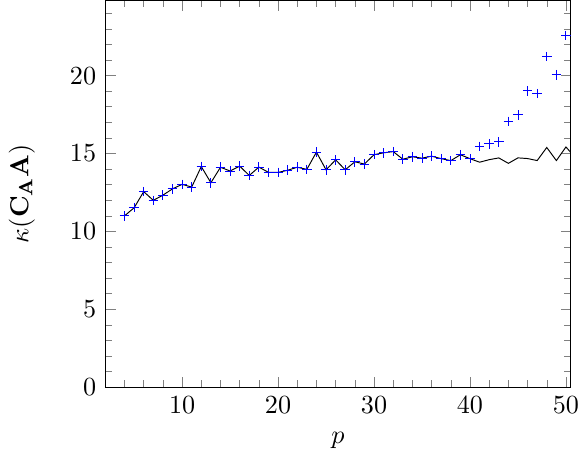}}
\caption{Condition numbers $\kappa(\mathbf{C}_\mathbf{A} \mathbf{A})$ obtained by \eqref{combined} with $\mathbf{C}_{\mathbf{\tilde A}_2} = \mathbf{\tilde A}_2^{-1}$.}
\label{fig:resultsASM12}
\end{figure}

We adhere to the previous notation concerning the matrices $\mathbf{C}_{\mathbf{\tilde A}_1}, \mathbf{\tilde A}_1, \mathbf{C}_{\mathbf{\tilde A}_2}, \mathbf{\tilde A}_2$, $\mathbf{B}_i$, and $\mathbf{S}_i$ for $i=1,2$. Combining Theorems~\ref{th:stageI}, \ref{th:2}, and~\ref{thm:tildeA2-prec}, we obtain the following result.
\begin{theorem}
\label{thm:combined}
Assume that the grading conditions \eqref{eq:setting.50} hold. Then the
composite preconditioner
\begin{equation}
\label{combined}
\mathbf{C}_\mathbf{A}:= \mathbf{B}_1^{-1} + \mathbf{S}_1 \big(\mathbf{B}_2^{-1} + \mathbf{S}_2 \mathbf{C}_{\mathbf{\tilde A}_2}\mathbf{S}_2^T\big) \mathbf{S}_1^T
\end{equation}
for the DG-system \eqref{eq:conf.1} satisfies $\kappa(\mathbf{C}_\mathbf{A} \mathbf{A})=\cO(1)$ uniformly in
the discretization parameters $\delta= (\bH,\bp)$.
\end{theorem}

Moreover, safe for the exact inversion of $\mathbf{B}_2$, each application requires
a number of operations that stays uniformly proportional to ${\rm dim}\,V_\delta$ with respect to the discretization parameters $\delta=(\bH,\bp)$.

The quantitative behavior of the combined preconditioner is illustrated by the subsequent experiments.
First, we set $\mathbf{C}_{\mathbf{\tilde A}_2}=\mathbf{\tilde A}_2^{-1}$ in \eqref{combined}, i.e., we invert $\mathbf{\tilde A}_2$ exactly.
The numerical results for both test scenarios are depicted in Fig.~\ref{fig:resultsASM12}.

For both scenarios, the numerical effects observed earlier for the single stages are still present.
The oscillations that have been striking in the second stage for the first test scenario now only have a mild effect, but their dependence on the ratio $q/p$ is still clearly visible.
In the second test scenario, for $p \le 40$ the condition numbers for the approximate and exact solutions of the smoothing problem almost agree.
For larger $p$ the insufficiently accurate solutions of the smoothing problem by the substructuring method in the second stage lead to a deterioration of the overall condition number. When $\mathbf{B}_2$ is inverted exactly, the condition number is bounded by $17$.

Fig.~\ref{fig:resultsASM123} shows the condition numbers $\kappa(\mathbf{C}_\mathbf{A} \mathbf{A})$ obtained
for the second test scenario when $\mathbf{C}_{\mathbf{\tilde A}_2}$ is the multi-wavelet preconditioner defined in \eqref{eq:tA2-prec};
the polynomial degree $p$ ranges from $4$ to $50$.
In Fig.~\subref*{fig:resultsASM123a} these condition numbers (solid line) are compared to those obtained for $\mathbf{C}_{\mathbf{\tilde A}_2}=\mathbf{\tilde A}_2^{-1}$ (dashed line), see also Fig.~\subref*{fig:resultsASM2b}.

When the multi-wavelet preconditioner is applied, we again observe that, besides the oscillations, the condition numbers essentially tend to a limit smaller than $18$.
In comparison with the condition numbers obtained for the variant where $\mathbf{\tilde A}_2$ is inverted exactly,
we observe that both curves are nearly parallel. The oscillations, which are due to the LGL/dyadic grid resonance,
in both curves appear for the same polynomial degrees, but their amplitudes are multiplied by a factor in the range from $2$ to $3$ in the variant using the multi-wavelet preconditioner.

The effect of the substructuring method is visualized as in Fig.~\ref{fig:resultsASM2}, namely the blue plot marks indicate the condition numbers obtained when ${\bf B_{2}}$ is approximated by $7$ iterations of the substructuring method while the solid line represents the condition numbers when the smoothing problem is solved exactly by a direct method.

The results obtained with the composite preconditioner are qualitatively comparable
to the ones obtained in Subsection~\ref{subsec:ApproxInvB2}, see also Fig.~\subref*{fig:resultsASM2b}. For $p \le 41 $ we observe that the substructuring method performs as well as exact solution of the smoothing problem.
In comparison with the variant where $\mathbf{\tilde A}_2$ is inverted exactly the condition numbers are larger by a small factor below 2.

\begin{figure}
\centering
\subfloat[Using smoothing operator $\mathbf{B}_3$.\label{fig:resultsASM123a}]{\includegraphics[height=0.375\linewidth]{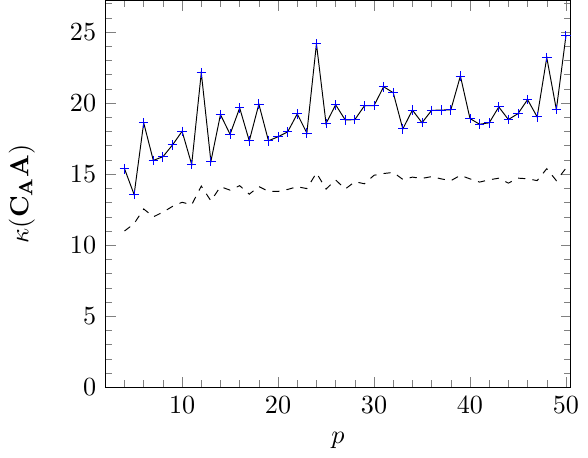}}\hspace{0.01\linewidth}
\subfloat[Without smoothing operator $\mathbf{B}_3$. \label{fig:resultsASM123b}]{\includegraphics[height=0.375\linewidth]{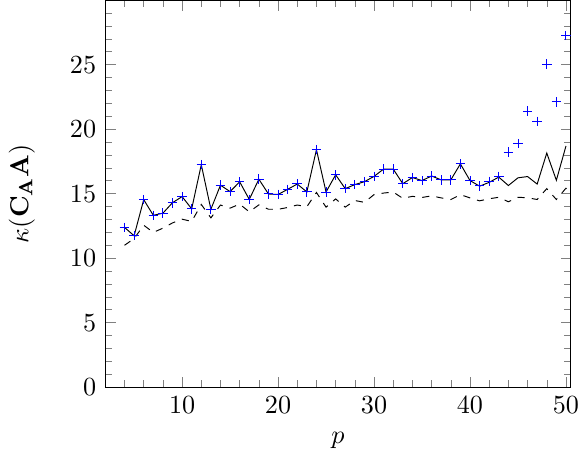}}
\caption{Condition numbers $\kappa(\mathbf{C}_\mathbf{A} \mathbf{A})$ obtained by the combined preconditioner \eqref{combined} in the second test scenario.
We display the condition numbers obtained using the multi-wavelet preconditioner for $\mathbf{C}_{\mathbf{\tilde A}_2}$ when the inverse of $\mathbf{B}_2$ (and $\mathbf{B}_3$ in Fig.~\protect\subref*{fig:resultsASM123a}) is either computed exactly (solid line) or is approximated using the substructuring method (blue crosses).
For a comparison we also give the results obtained using exact inverse of $\mathbf{\tilde A}_2$ (dashed line; see also Fig.~\protect\subref*{fig:resultsASM12b}).}
\label{fig:resultsASM123}
\end{figure}

Finally, we numerically investigate a variant of the concatenated preconditioner \eqref{combined}, where in
the definition of the multi-wavelet preconditioner \eqref{eq:tA2-prec} the inverse of the smoother $\mathbf{B}_3$ is omitted. This can be interpreted as a single auxiliary space method with the concatenated operators $Q_2 \circ Q_3$
and $\tilde Q_3 \circ \tilde Q_2$ and the auxiliary bilinear form $b_2$.

In the numerical results displayed in Fig.~\subref*{fig:resultsASM123b} we observe that the condition number is bounded independently of the discretization parameters $\delta=(\bH,\bp)$. Quantitatively, the condition numbers are somewhat lower than in the first variant, i.e., we can save computational cost and obtain a better preconditioner when the summand $\mathbf{B}_3^{-1}$ is omitted.
On the other hand, when an approximate inversion of $\mathbf{B}_3$ is used the need for a more accurate inversion of $\mathbf{B}_2$ seems to be reduced.

\section{Proof of Theorem~\ref{th:stageI}}
\label{sec:proof-I}

The proof of Theorem~\ref{th:stageI} follows from Corollary~\ref{cor:asm1} once we have verified the validity of the
ASM-conditions in Section~\ref{sec:asm}. Since here $\tilde V =V^c_\delta \subset V=V_\delta$ Proposition~\ref{prop:asm2}
applies, i.e., we have to verify {\bf ASM2} and find a suitable $\tilde Q : V\to \tilde V$, satisfying the direct estimate
\eqref{eq:asm6}.

To this end, we collect first some simple preparatory facts about product LGL grids and the structure of
corresponding quadrature weights. To organize this it is convenient to localize the facet complexes $\cF_l$.
Given two integers $0 \leq l,m \leq d$ and a facet $F \in \cF_l$ (see \eqref{facetcomplex}), we set
\begin{equation}
\label{eq:facets1}
\cF_m(F)=\{ G \in \cF_m \, : \, G \subset F \text{ if } m<l \;, \ G = F \text{ if } m=l \;, \
G \supset F \text{ if } m>l \}\;.
\end{equation}
This new definition consistently extends the ones given in Section~\ref{sect2.1}, where
in particular $\cF_l(R)$ has already been defined as the set of all $l$-facets of an element $R \in \cR$, whereas $\cR(F)=\cF_{d}(F)$
is the set of all elements $R$ containing the facet $F$.

Given any $R \in \cR$ and the corresponding LGL grid $\cG_\bp(R)$, if $F \in \cF_l(R)$ for some $1 \leq l \leq d-1$,
then $\cG_\bp(R) \cap F$ is the LGL grid of the induced quadrature formula on $F$, which will be denoted by $\cG_\bp(F,R)$.
The order of the formula, denoted by $\bp_*=\bp(F,R) \in \mathbb{N}^l$, is obtained from the vector $\bp=\bp(R)$
by deleting the $d-l$ components
corresponding to the frozen coordinates of $F$. The weights of the formula, denoted by $w^{F,R}_\xi$, are connected to the weights
$w_\xi$ of the original formula on $R$ by the relation
\begin{equation}\label{eq:facets2}
w_\xi=w_{f_1}\cdots w_{f_{d-l}} w^{F,R}_\xi \qquad \forall \xi \in \cG_\bp(F,R) \,,
\end{equation}
where each $w_{f_i}$ is a boundary weight of the univariate LGL formula along one of the frozen coordinates of $F$.
This implies that if the facet $F'$ belongs to $\cF_{l-1}(F) \subset \cF_{l-1}(R)$, then
\begin{equation}\label{eq:facets3}
w^{F',R}_\xi=w_{f}w^{F,R}_\xi \qquad \forall \xi \in \cG_\bp(F',R) \,,
\end{equation}
for some boundary weight $w_{f}$.

Next, let us draw some consequences from the assumptions \eqref{eq:setting.50}
of quasi-uniformity of the mesh and polynomial degree grading. Under these assumptions, it is easily seen that
given any $R \in \cR$, there exists a real ${\sf H}_R>0$ and an integer ${\sf p}_R>0$, such that for all $R' \in \cR$ satisfying
$R' \cap R \not = \emptyset$, it holds
\begin{equation}\label{eq:facets4}
H_k(R') \simeq {\sf H}_R \;, \qquad p_k(R') \simeq {\sf p}_R \;, \qquad \quad 1 \leq k \leq d \;,
\end{equation}
i.e., lengths, as well as polynomial degrees are locally comparable. More generally, for any
facet $F\in \cF_l$, $l<d$, one can define analogous ``representative parameters'' ${\sf H}_F, {\sf p}_F$, for instance,
by averaging corresponding parameters from intersecting elements. This implies
\begin{equation}\label{eq:facets4half}
{\sf H}_F \simeq {\sf H}_G \;, \qquad {\sf p}_F \simeq {\sf p}_G \;, \qquad \text{for any two faces }F, \ G \text{ such that }
F \cap G \not = \emptyset \;.
\end{equation}
Furthermore, for each boundary weight $w_j$ of any univariate LGL formula used in the definition of the tensorial LGL formula on $R'$, and for any face $F \in \cF_{d-1}(R')$, one has
\begin{equation}
\label{eq:facets5}
w_j \simeq \frac{{\sf H}_R}{\, {\sf p}_R^2} ,\quad j \in \{0,p_k(R')\}, \qquad \qquad \omega_F \simeq \frac{\, {\sf p}_F^2}{{\sf H}_F},
\quad F \in \cF_{d-1}(R').
\end{equation}
In particular, all the faces $F \in \cF_{d-1}$ having a nonempty intersection with an element $R$ carry weights
$\omega_F$ of comparable magnitude.

As a final prerequisite recall the fundamental property
that the bilinear form
\begin{equation}
\label{eq:basic.2}
({u},{v})_{0,{I},p} =
\sum_{j=0}^p {u}({\xi}_j) {v}({\xi}_j)\, {w}_j
\end{equation}
is an inner product in $\mathbb{P}_p(I)$, which induces a norm that is uniformly equivalent
to the $L^2$-norm, namely
$
\Vert v \Vert_{0,I} \leq \Vert v \Vert_{0,{I},p} \leq \sqrt{3} \Vert v \Vert_{0,I}$
for all $ v \in \mathbb{P}_p(I)$, see e.g. \cite[(5.3.2) on p. 280]{CaHuQuZa06}.
Tensorization yields that
\begin{equation}
\label{eq:basic.12}
({u},{v})_{0,{R},\bp} =
\sum_{\xi \in \cG_\bp(R)} {u}({\xi}) {v}({\xi}) \, {w}_{\xi}
\end{equation}
is a discrete inner product in $\mathbb{Q}_\bp(R)$, defining a norm which is uniformly equivalent to the $L^2$-norm, i.e.,
\begin{equation}
\label{eq:basic.13}
\Vert {v} \Vert_{0,R} \leq \Vert {v} \Vert_{0,{R},\bp} \leq \big(\sqrt{3}\big)^d
\Vert {v} \Vert_{0,R} \qquad \forall {v} \in \mathbb{Q}_\bp(R) \;.
\end{equation}
LGL grids as well as discrete inner products and norms restrict to any facet of $R$ in the obvious way,
yielding objects with analogous properties.

We shall now verify condition {\bf ASM2}, i.e., the form $b_\delt(\cdot,\cdot)$ dominates $a_\delta(\cdot,\cdot)$.
To this end, the following
immediate consequences of \eqref{eq:facets5}
concerning the weights $W_\xi$ introduced in \eqref{eq:ascs1} will be useful.

\begin{lemma}\label{lem:WF}
Let $R \in \cR$ be arbitrary. For any face $F \in \cF_{d-1}$ and any $\xi \in \cG_\bp(F,R)$, one has
\begin{equation*}
W_\xi \simeq \omega_F w^{F,R}_\xi \;.
\end{equation*}
\end{lemma}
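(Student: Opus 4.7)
My plan is to prove the estimate by a direct computation from the definition \eqref{eq:ascs1}, exposing $w^{F,R}_\xi$ as a common factor and then matching the remaining parenthetical expression to $\omega_F$ via the local quasi-uniformity estimates \eqref{eq:facets5}--\eqref{eq:facets6}.

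First I would fix the geometric setup: since $F \in \cF_{d-1}(R)$, the face is orthogonal to some coordinate direction, call it $k_0$. For $\xi \in \cG_p(F,R)$, the $k_0$-th coordinate of $\xi$ coincides with one of the endpoints of $I_{k_0}$, so the univariate weight $w_{\xi,k_0}$ is a boundary LGL weight. Consequently, the relation \eqref{eq:facets2} specializes to $w_\xi = w_{\xi,k_0}\, w^{F,R}_\xi$, and by \eqref{eq:facets5} we have $w_{\xi,k_0} \simeq {\sf H}/{\sf p}^2$, while by \eqref{eq:facets6}, $\omega_F \simeq {\sf p}^2/{\sf H}$; in particular $w_{\xi,k_0}^{-1} \simeq \omega_F$.

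Next I would split the sum in the definition of $W_\xi$ into the $k=k_0$ contribution and the remainder. Using $\prod_{j \neq k}w_{\xi,j} = w_{\xi,k_0}\prod_{j \neq k, j \neq k_0}w_{\xi,j} = w_{\xi,k_0}\, w^{F,R}_\xi/w_{\xi,k}$ for $k\neq k_0$, and factoring out $w^{F,R}_\xi$, one obtains the clean expression
\begin{equation*}
W_\xi = w^{F,R}_\xi \left( w_{\xi,k_0}^{-1} + w_{\xi,k_0} \sum_{k \neq k_0} w_{\xi,k}^{-2}\right).
\end{equation*}
This identity is the key algebraic step of the argument.

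It then remains to show that the parenthetical term is $\simeq \omega_F$. The lower bound is immediate since the first summand $w_{\xi,k_0}^{-1}$ is already $\simeq {\sf p}^2/{\sf H} \simeq \omega_F$. For the matching upper bound I would note that any univariate LGL weight satisfies $w_{\xi,k} \gsim {\sf H}/{\sf p}^2$ (the boundary weights being the smallest), hence $w_{\xi,k}^{-2} \lsim ({\sf p}^2/{\sf H})^2$, and therefore each of the finitely many (namely $d-1$) terms in the remaining sum is bounded by $w_{\xi,k_0}\cdot ({\sf p}^2/{\sf H})^2 \simeq ({\sf H}/{\sf p}^2)({\sf p}^2/{\sf H})^2 \simeq \omega_F$. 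Since $d$ is fixed, this completes the proof. There is no real obstacle here; the only care required is to manage the bookkeeping so that $w^{F,R}_\xi$ factors out cleanly before the size estimates are invoked.
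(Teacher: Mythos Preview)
Your proof is correct and follows exactly the route the paper indicates: the paper presents the lemma as an ``immediate consequence of \eqref{eq:facets5} and \eqref{eq:facets6}'' without spelling out the details, and your computation is precisely the natural way to unpack that claim. The factorization $W_\xi = w^{F,R}_\xi\bigl(w_{\xi,k_0}^{-1} + w_{\xi,k_0}\sum_{k\neq k_0} w_{\xi,k}^{-2}\bigr)$ together with the lower bound $w_{\xi,k}\gsim {\sf H}/{\sf p}^2$ for all univariate LGL weights is exactly what is needed.
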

\begin{proposition}
\label{prop:b_dominates_a}
For $b_\delt$ given in Definition~\ref{def:b_delta} one has
\begin{equation*}
a_\delta(v,v) \lsim b_\delt(v,v) \qquad \forall v \in V_\delta \;.
\end{equation*}
\end{proposition}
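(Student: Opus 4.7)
The plan is to split $a_\delta(v,v)$ into its two constituent parts — the elementwise gradient energy $\sum_R \|\nabla v\|_{0,R}^2$ and the jump penalty $\gamma \sum_F \omega_F \|[v]\|_{0,F}^2$ — and bound each one, elementwise, by $b_R(v,v)$.

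For the gradient part, the key tool is the refined inverse inequality \eqref{eq:basic.9ter} in each coordinate direction. Fix $R\in\cR$ and a direction $k$. For $v=v_R \in \mathbb{Q}_p(R)$, I would apply \eqref{eq:basic.9ter} to the univariate polynomial $x_k \mapsto v(x_1,\dots,x_d)$ (with the remaining coordinates frozen), giving
\begin{equation*}
\int_{I_k}(\partial_k v)^2\,dx_k \lsim \sum_{j_k=0}^{p_k} v^2(\dots,\xi_{k,j_k},\dots)\, w_{k,j_k}^{-1}.
\end{equation*}
Integrating over the remaining coordinates and then invoking the tensorized norm equivalence \eqref{eq:basic.3} on the faces (applied to $v(\dots,\xi_{k,j_k},\dots)$, which is a polynomial of degree at most $p_j$ in each remaining $x_j$), I would bound each resulting $L^2$-integral by the LGL sum with the weights $\prod_{j\neq k} w_{\xi,j}$. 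Summing over $k$ produces precisely the combination $\sum_k w_{\xi,k}^{-1}\prod_{j\neq k} w_{\xi,j}=W_\xi$, yielding
\begin{equation*}
\|\nabla v\|_{0,R}^2 \lsim \sum_{\xi\in\cG_p(R)} v^2(\xi)\,W_\xi \lsim b_R(v,v),
\end{equation*}
where the last step uses $c_\xi\simeq 1$.

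For the jump part, I would first use $\|[v]\|_{0,F}^2 \lsim \sum_{R\in\cR(F)} \|v_{R|F}\|_{0,F}^2$ (with the obvious modification when $F\subset\partial\Omega$) so that it suffices to estimate $\omega_F\|v_{R|F}\|_{0,F}^2$ for each face $F\in\cF_{d-1}(R)$. Since $v_{R|F}\in\mathbb{Q}_{p(F,R)}(F)$, the face version of the norm equivalence \eqref{eq:basic.13} gives
\begin{equation*}
\|v_{R|F}\|_{0,F}^2 \leq \sum_{\xi\in\cG_p(F,R)} v^2(\xi)\, w_\xi^{F,R}.
\end{equation*}
Lemma~\ref{lem:WF} then converts $\omega_F w_\xi^{F,R}$ into $W_\xi$ up to constants. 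Since $\cG_p(F,R)\subset\cG_p(R)$ and all summands are non-negative, the resulting sum over face nodes can be dominated by the sum over all LGL nodes of $R$, producing again a quantity $\lsim b_R(v,v)$. The number of faces incident to each $R$ is uniformly bounded, so summing over faces and elements gives the jump bound.

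Adding the two contributions finishes the proof. The main technical point is really in step one: keeping track of which direction's weight gets inverted during the tensorization of \eqref{eq:basic.9ter} and verifying that the combination arising on the right-hand side is exactly the weight $W_\xi$ built into the definition of $b_\delta$. Everything else is an application of results already established — the face norm equivalence, Lemma~\ref{lem:WF}, and the quasi-uniformity consequences from Section~\ref{subsec:sdgtoscg-1}.
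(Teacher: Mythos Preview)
Your proposal is correct and follows essentially the same route as the paper's proof: the refined inverse inequality \eqref{eq:basic.9ter} tensorized with \eqref{eq:basic.3} for the gradient part, and the face norm equivalence \eqref{eq:basic.13} combined with Lemma~\ref{lem:WF} for the jump part. The paper's argument is virtually identical, differing only in presentation.
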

\begin{proof}
For any $R \in \cR$, using \eqref{eq:basic.2}, \eqref{eq:basic.13} and \eqref{eq:basic.9ter} together with tensorization, one obtains
\begin{equation*}
\Vert \nabla v \Vert_{0,R}^2 =
\sum_{k=1}^d \Vert \partial_k v \Vert_{0,R}^2 \lsim
\sum_{k=1}^d \sum_{\xi \in \cG_\bp(R)} v(\xi)^2 \, w_{\xi,k}^{-1}\Big(\prod_{j\neq k}w_{\xi,j}\Big)=
\sum_{\xi \in \cG_\bp(R)} v(\xi)^2 \, W_\xi \;,
\end{equation*}
whence
\begin{equation*}
\sum_{R \in \cR} \Vert \nabla v \Vert_{0,R}^2 \lsim b_\delt(v,v) \;.
\end{equation*}
On the other hand, let $F \in \cF_{d-1}$ be a face shared by two elements, say
$R^\pm$, and let $\bp_*^\pm=\bp(F,R^\pm)$.
We have
\begin{equation*}
\Vert \, [v] \, \Vert_{0,F}^2 \lsim \sum_\pm \Vert v^\pm \Vert_{0,F}^2
\lsim \sum_\pm \Vert v^\pm \Vert_{0,F,\bp_*^\pm}^2
= \sum_\pm \sum_{\xi \in \cG_{\bp}(F,R^\pm)} (v^\pm(\xi))^2 w^{F,R^\pm}_\xi \;.
\end{equation*}
Multiplying by $\omega_F$ and using Lemma~\ref{lem:WF} for both $R=R^\pm$, we obtain
\begin{equation*}
\omega_F \Vert \, [v] \, \Vert_{0,F}^2
\lsim \sum_\pm \sum_{\xi \in \cG_{\bp}(F,R^\pm)} (v^\pm(\xi))^2 \, W_\xi^\pm
\leq \sum_\pm \sum_{\xi \in \cG_{\bp}(R^\pm)} (v^\pm(\xi))^2 \, W_\xi^\pm \;.
\end{equation*}
A similar result holds for the faces $F\in \cF_{d-1}$ sitting on the boundary of $\Omega$. Hence,
\begin{equation*}
\sum_{F \in \cF} \omega_F \Vert \, [v] \, \Vert_{0,F}^2 \lsim b_\delt(v,v) \;,
\end{equation*}
which proves our claim.
\end{proof}

To verify the remaining ASM conditions we need to identify a suitable operator $\tilde Q : V_\delta \to V^c_\delta$.
First recall that, on account of Assumption \eqref{ass:locdeg} (which indeed poses a restriction
only for $d \geq 3$),
we are for each $F \in \cF_l$ with $1 \leq l \leq d$, entitled to select, once and for all, an element $R^\newsharp(F)$
such that
\begin{equation}\label{eq:facets7}
R^\newsharp(F):= {\rm argmin}_{R' \in \cR(F)}\bp(F,R'),\qquad \bp^\newsharp(F) := \bp(F,R^\newsharp(F)).
\end{equation}
The LGL grid $\cG_\bp(F,R^\newsharp(F))$ will be denoted by $\cG_{\bp^\newsharp}(F)$. Finally, for each vertex $z \in \cF_0$
we select an arbitrary element $R^\newsharp(z)$.

Given $v\in V_\delta$, the construction of $\tilde Q v =: \tilde v$ is based on the following simple idea.
Ascending from lower to higher dimensional facets, first at each vertex $z$
of $\cR$ we assign the value $\tilde v(z) := v_{R^\newsharp(z)}(z)$. Given $F\in \cF_l$ for some $l\geq 1$ and having fixed the values of
$\tilde v$ at the LGL nodes in the boundary $\partial_l F$ (viewing $F$ as an $l$-dimensional manifold),
we prescribe for $\tilde v$ at the LGL nodes in the relative interior of $F$
to be the corresponding values of $v_{R^\newsharp(F)}$. Finally, $\tilde v$ agrees with $v_R$ at the interior LGL nodes of $\cG_\bp(R)$.
Formally, this can be described by the following recursive procedure:

\begin{definition}
\label{def:Qtilde-scg}
For $0 \leq l \leq d$, set
$
F_l =\bigcup_{F \in \cF_l} F.
$
Given any $v \in V_\delta$, we define the sequence
of piecewise polynomial functions $\tilde{q}_l \, : \, F_l \to \mathbb{R}$ by the following recursion:
\begin{itemize}
\item[i)] For any $x \in \cF_0$, set $\tilde{q}_0(x)= 0$ if $x \in \partial\Omega$ and
$\tilde{q}_0(x)= v_{R^\newsharp(x)}(x)$ if $x \in \Omega$.

\item[ii)] For $l=1, \dots , d$, define $\tilde{q}_l$ on $F_l$ as follows: for any $F \in \cF_l$,
set $\tilde{q}_l\!\!\mid_{F}=0$ if $F \subset \partial\Omega$, otherwise define $\tilde{q}_l\!\mid_{F} \in \mathbb{Q}_{\bp^\newsharp}(F) $ by the conditions
\begin{equation}
\label{eq:Qtilde-scg.1}
\begin{split}
& \tilde{q}_l\!\mid_{F}(\xi)=v_{R^\newsharp(F)}(\xi) \qquad \forall \xi \in \cG_{\bp^\newsharp}(F) \setminus \partial_l F \;, \\
& \tilde{q}_l\!\mid_{F}(\xi)=\tilde{q}_{l-1}(\xi) \quad \qquad \forall \xi \in \cG_{\bp^\newsharp}(F) \cap \partial_l F .
\end{split}
\end{equation}
\end{itemize}
Finally, we set
\begin{equation}\label{eq:Qtilde-scg.2}
\tilde{Q} v = \tilde{q}_d \;.
\end{equation}
\end{definition}

\begin{remark}
\label{remQdelta}
The above recursion defines a linear operator $\tilde Q$ from $V_\delta$ into $\tilde V_\delta$.
\end{remark}
In fact, the linearity of $\tilde Q$ is obvious. Moreover,
note that $\tilde{Q} v \in V_\delta$ since $\bp^\newsharp(R) =\bp(R)$ for all $R \in \cR$. Furthermore, one has the property
\begin{equation}\label{eq:Qtilde-scg.3}
\tilde{q}_l\!\mid_{F_{l-1}} = \tilde{q}_{l-1} \qquad 1 \leq l \leq d \;.
\end{equation}
Indeed, if $F \in \cF_l$ and $F' \in \cF_{l-1}(F)$, then $\tilde{q}_l\!\mid_{F'} \in \mathbb{Q}_{\bp(F',R^\newsharp(F))}(F')$ whereas
\begin{equation*}
\tilde{q}_{l-1|F'} \in \mathbb{Q}_{\bp(F',R^\newsharp(F'))}(F').
\end{equation*}
The minimality property \eqref{eq:facets7} applied to $F'$,
together with the last set of conditions in \eqref{eq:Qtilde-scg.1}, imply
that $\tilde{q}_l\!\mid_{F'}=\tilde{q}_{l-1}\!\mid_{F'}$, whence \eqref{eq:Qtilde-scg.3} follows. This property implies that
$\tilde{Q} v$ is continuous throughout $\overline{\Omega}$, and vanishes on $\partial\Omega$.
We conclude that $\tilde{Q} v$ belongs to $\tilde{V}_\delta$.

We now turn to the verification of
\eqref{eq:asm6} in Proposition~\ref{prop:asm2}. This will be accomplished through a sequence of intermediate
results.

\begin{lemma}
\label{lem:boundb1}
For any $v \in V_\delta$, let its restriction to $R\in \cR$ be denoted by $v_R$.
The following bound holds
\begin{equation*}
b_\delt (v-\tilde{Q} v,v-\tilde{Q} v) \lsim \sum_{F \in \cF_{d-1}} \omega_F
\sum_{R \in \cR(F)} \Vert v_R - \tilde{q}_{d-1} \Vert_{0,F}^2, \quad v\in V_\delta ,
\end{equation*}
where $\tilde{q}_{d-1}$ is constructed in Definition~\ref{def:Qtilde-scg} in order to define $\tilde{Q} v$.
\end{lemma}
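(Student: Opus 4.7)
The plan is to exploit the defining property of $\tilde Q_\delta$ at step $l=d$ in Definition~\ref{def:Qtilde-scg}: since one may take $R^\sharp(R)=R$, the local polynomial $(\tilde Q_\delta v)_R$ interpolates $v_R$ at every interior LGL node of $R$ and agrees with $\tilde q_{d-1}$ at the LGL nodes of $R$ lying on $\partial R$. Hence $(v-\tilde Q_\delta v)(\xi)=0$ for every $\xi\in\cG_p(R)$ in the interior of $R$, and the definition of $b_\delta$ collapses to
\begin{equation*}
b_\delta(v-\tilde Q_\delta v,v-\tilde Q_\delta v)=\sum_{R\in\cR}\sum_{\xi\in\cG_p(R)\cap\partial R}\bigl(v_R(\xi)-\tilde q_{d-1}(\xi)\bigr)^2\,c_\xi W_\xi.
\end{equation*}

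Next I would regroup the boundary nodes of each $R$ according to the $(d{-}1)$-faces containing them. Every $\xi\in\cG_p(R)\cap\partial R$ belongs to at least one and at most $d$ faces $F\in\cF_{d-1}(R)$, and $c_\xi\simeq 1$ by assumption, so
\begin{equation*}
b_\delta(v-\tilde Q_\delta v,v-\tilde Q_\delta v)\lsim \sum_{R\in\cR}\sum_{F\in\cF_{d-1}(R)}\sum_{\xi\in\cG_p(F,R)}\bigl(v_R(\xi)-\tilde q_{d-1}(\xi)\bigr)^2 W_\xi.
\end{equation*}
Applying Lemma~\ref{lem:WF} to replace $W_\xi$ by $\omega_F\, w^{F,R}_\xi$ uniformly for $\xi\in\cG_p(F,R)$ yields a factor $\omega_F$ outside the innermost sum, together with the discrete $L^2$-inner product on $F$ associated to the order-$p(F,R)$ LGL grid on $F$.

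The last step is to convert this discrete norm into the continuous $L^2(F)$-norm via the face analog of the equivalence \eqref{eq:basic.13}, which is available precisely on $\mathbb{Q}_{p(F,R)}(F)$. The restriction $v_R|_F$ clearly lives in this space. For $\tilde q_{d-1}|_F$, Definition~\ref{def:Qtilde-scg} produces an element of $\mathbb{Q}_{p^\sharp(F)}(F)$, and by the minimality property \eqref{eq:facets7} one has $p^\sharp(F)\leq p(F,R)$ componentwise for every $R\in\cR(F)$, so $\mathbb{Q}_{p^\sharp(F)}(F)\subset\mathbb{Q}_{p(F,R)}(F)$. Consequently the difference $(v_R-\tilde q_{d-1})|_F$ lies in $\mathbb{Q}_{p(F,R)}(F)$ and the equivalence applies, turning the innermost sum into $\Vert v_R-\tilde q_{d-1}\Vert_{0,F}^2$. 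Exchanging the order of summation ($\sum_R\sum_{F\in\cF_{d-1}(R)}=\sum_{F\in\cF_{d-1}}\sum_{R\in\cR(F)}$) produces the claimed bound. The only genuinely delicate point is the polynomial-containment check in the final step; everything else is bookkeeping of nodes on lower-dimensional facets, with all overcounting bounded by a constant depending only on $d$.
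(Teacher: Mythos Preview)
Your proof is correct and follows essentially the same argument as the paper's own proof: reduce $b_\delta$ to a sum over boundary LGL nodes using $\tilde q_d(\xi)=v_R(\xi)$ for interior $\xi$ and $\tilde q_d(\xi)=\tilde q_{d-1}(\xi)$ for $\xi\in\partial R$, regroup by faces, apply Lemma~\ref{lem:WF}, and invoke the discrete--continuous norm equivalence on $F$ after checking $\tilde q_{d-1}|_F\in\mathbb{Q}_{p^\sharp(F)}(F)\subseteq\mathbb{Q}_{p(F,R)}(F)$. The paper's proof is identical in structure and in the identification of the only nontrivial step.
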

\begin{proof}
By definition of $b_\delt$, one has
\begin{equation*}
b_\delt (v-\tilde{Q} v,v-\tilde{Q} v) = \sum_{R \in \cR}
\sum_{\ \xi \in \cG_\bp(R)} |(v_R-\tilde{Q} v)(\xi)|^2 \, c_\xi W_\xi \;.
\end{equation*}
In view of \eqref{eq:Qtilde-scg.2}, $(\tilde{Q} v)\!\mid_{R}=\tilde{q}_d\!\mid_{R}$ and $\tilde{q}_{d}$ coincides with $v_R$ on $\cG_\bp(R)\setminus\partial R$
and with $\tilde{q}_{d-1}$ on $\cG_\bp(R)\cap\partial R$. Thus, since $c_\xi \simeq 1$
\begin{eqnarray*}
b_\delt (v-\tilde{Q} v,v-\tilde{Q} v) &\simeq&
\sum_{R \in \cR} \sum_{\ \xi \in \cG_\bp(R)\cap\partial R} |(v_R-\tilde{q}_{d-1})(\xi)|^2 \, W_\xi \\
&\leq & \sum_{R \in \cR} \sum_{\ F \in \cF_{d-1}(R)} \sum_{\ \xi \in \cG_\bp(F,R)} |(v_R-\tilde{q}_{d-1})(\xi)|^2 \, W_\xi \\
&\lsim& \sum_{F \in \cF_{d-1}} \omega_F \sum_{R \in \cR(F)}
 \sum_{\ \xi \in \cG_\bp(F,R)} |(v_R-\tilde{q}_{d-1})(\xi)|^2 \, w^{F,R}_\xi \;,
\end{eqnarray*}
where we have used Lemma~\ref{lem:WF} in the last inequality. Finally, we observe that $v_{R|F} \in \mathbb{Q}_{\bp(F,R)}(F)$,
whereas $\tilde{q}_{d-1|F} \in \mathbb{Q}_{\bp^\newsharp(F)}(F) \subseteq \mathbb{Q}_{\bp(F,R)}(F)$. Thus, \eqref{eq:basic.13} yields
\begin{equation*}
 \sum_{\ \xi \in \cG_\bp(F,R)} |(v_R-\tilde{q}_{d-1})(\xi)|^2 \, w^{F,R}_\xi \simeq \Vert v_R - \tilde{q}_{d-1} \Vert_{0,F}^2
\end{equation*}
proving the assertion.
\end{proof}

\begin{lemma}\label{lem:boundb2}
Let $v \in V_\delta$ be arbitrary, and let $\tilde{q}_l$, $0 \leq l \leq d$, be the sequence built in
Definition~\ref{def:Qtilde-scg} in order to define $\tilde{Q} v$. For $1 \leq l \leq d-1$, given any $F \in \cF_l$ and
any $R \in \cR(F)$, the following bound holds
\begin{equation}\label{eq:boundb1}
\Vert v_R - \tilde{q}_l \Vert_{0,F}^2 \lsim \sum_{G \in \cF_{d-1}(F)} \Vert \, [v]_G \, \Vert_{0,F}^2 \ \ +\ \
{\sf H}_F {\sf p}^{-2}_F \!\!\!\!\!\!\!\! \sum_{F' \in \cF_{l-1}(F)} \Vert v_{R^\newsharp(F)} - \tilde{q}_{l-1} \Vert_{0,F'}^2 \;,
\end{equation}
where $[v]_G$ denotes the jump of $v$ across the internal face $G$, or $v|_G$ when $G\subset \partial\Omega$.
On the other hand, for any vertex $x \in \cF_0$ and any $R \in \cR(x)$, one has
\begin{equation}\label{eq:boundb2}
|(v_R - \tilde{q}_0)(x)|^2 \lsim \sum_{G \in \cF_{d-1}(x)} |\, [v]_G(x) \, |^2 \;.
\end{equation}
\end{lemma}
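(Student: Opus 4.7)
The plan is to handle the two bounds in parallel by isolating two distinct mechanisms that control $v_R - \tilde{q}_l$ on a given facet: a chaining argument that replaces $v_R$ by $v_{R^\sharp(F)}$ (or by $0$ when touching $\partial\Omega$) at the cost of $(d-1)$-face jumps, and a nodal analysis based on \eqref{eq:basic.13} that isolates the boundary values propagated from $\tilde{q}_{l-1}$.

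For the vertex bound \eqref{eq:boundb2}, I would fix $R\in\cR(x)$ and build a chain $R=R_0,R_1,\dots,R_k$ of elements of $\cR(x)$, with consecutive elements sharing a $(d-1)$-face $G_i\in\cF_{d-1}(x)$, terminating either at $R_k=R^\sharp(x)$ when $x\in\Omega$, or at an $R_k$ carrying a face $G_k\in\cF_{d-1}(x)$ in $\partial\Omega$ when $x\in\partial\Omega$. Telescoping $v_R(x)-\tilde{q}_0(x)$ and identifying each increment $(v_{R_i}-v_{R_{i+1}})(x)$, together with the boundary trace $v_{R_k}(x)$ in the Dirichlet case, with $\pm[v]_{G_i}(x)$ immediately yields \eqref{eq:boundb2}; the number of links is uniformly bounded by the grading \eqref{eq:setting.50}.

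For the facet case \eqref{eq:boundb1}, I would split $v_R - \tilde{q}_l = (v_R - v_{R^\sharp(F)}) + (v_{R^\sharp(F)}-\tilde{q}_l)$ on $F$. The first summand is handled by the same chaining idea, now using elements of $\cR(F)$ linked through $(d-1)$-faces of $\cF_{d-1}(F)$, which produces the first sum $\sum_{G\in\cF_{d-1}(F)}\Vert[v]_G\Vert_{0,F}^2$. For the second summand, both terms lie in $\mathbb{Q}_{p^\sharp(F)}(F)$, so applying \eqref{eq:basic.13} on $F$ gives
\[
\Vert v_{R^\sharp(F)}-\tilde{q}_l\Vert_{0,F}^2 \,\simeq\, \sum_{\xi\in\cG_{p^\sharp}(F)} |(v_{R^\sharp(F)}-\tilde{q}_l)(\xi)|^2\, w_\xi^{F,R^\sharp(F)}\;.
\]
By the defining property \eqref{eq:Qtilde-scg.1} of $\tilde{q}_l$, all interior nodes contribute zero, while each boundary node $\xi\in\cG_{p^\sharp}(F)\cap\partial_l F$ contributes $|(v_{R^\sharp(F)}-\tilde{q}_{l-1})(\xi)|^2$.

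The final step is to match this residual boundary sum with $\tfrac{{\sf H}}{\sf p^2}\sum_{F'\in\cF_{l-1}(F)}\Vert v_{R^\sharp(F)}-\tilde{q}_{l-1}\Vert_{0,F'}^2$. Decomposing $\partial_l F = \bigcup_{F'\in\cF_{l-1}(F)} F'$ and using \eqref{eq:facets3} to factor $w_\xi^{F,R^\sharp(F)}=w_f\, w_\xi^{F',R^\sharp(F)}$ with $w_f\simeq{\sf H}/{\sf p}^2$ by \eqref{eq:facets5} together with the local uniformity \eqref{eq:facets4}, I would recognize $\cG_{p^\sharp}(F)\cap F'$ as the LGL grid $\cG_q(F')$ of polynomial-degree vector $q=p(F',R^\sharp(F))$; since $v_{R^\sharp(F)}|_{F'}\in\mathbb{Q}_q(F')$ and, by the minimality \eqref{eq:facets7} applied to $F'$, $\tilde{q}_{l-1}|_{F'}\in\mathbb{Q}_{p^\sharp(F')}(F')\subseteq\mathbb{Q}_q(F')$, a second application of \eqref{eq:basic.13} on $F'$ identifies the inner sum precisely with $\Vert v_{R^\sharp(F)}-\tilde{q}_{l-1}\Vert_{0,F'}^2$. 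The main obstacle I expect is this last bookkeeping step: checking that the tensor-product structure of the LGL grids restricts cleanly on each $F'$ to the exact degree at which both the $L^2(F')$-norm equivalence and the minimality property of $R^\sharp(F')$ jointly apply, so that the weight factor $w_f$ is the one producing the advertised $\tfrac{{\sf H}}{\sf p^2}$ gain.
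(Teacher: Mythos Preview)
Your proposal is correct and follows essentially the same approach as the paper: the same chaining through $\cR(F)$ for the $A^2$-part, and the same nodal reduction on $\cG_{p^\sharp}(F)$ via \eqref{eq:basic.13}, \eqref{eq:facets3}, \eqref{eq:facets5} for the $B^2$-part, with the minimality of $R^\sharp(F')$ used exactly as you indicate. The only omission is the trivial case $F\subset\partial\Omega$ for $l\geq 1$ (where $\tilde q_l=0$ and $\|v_R\|_{0,F}$ is itself a jump term for some $G\in\cF_{d-1}(F)$ on the boundary), which the paper dispatches in one line before proceeding as you do.
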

\begin{proof}
Assume first $l>0$. If $F \subset \partial R\cap \partial\Omega$, then
\begin{equation*}
\Vert v_R - \tilde{q}_l \Vert_{0,F}^2 = \Vert v_R \Vert_{0,F}^2
\lsim \sum_{G \in \cF_{d-1}(F)} \Vert \, [v]_G \, \Vert_{0,F}^2\;,
\end{equation*}
which is a particular instance of \eqref{eq:boundb1}. Let us then assume $F \not \subset \partial \Omega$ and,
for simplicity, let us set $R^\newsharp=R^\newsharp(F)$. Observe that the definition of $\tilde{q}_{l|F}$ on
$\cG_{\bp^\newsharp}(F) \cap \partial_l F$ given in \eqref{eq:Qtilde-scg.1} can be rephrased as
$\tilde{q}_{l|F}(\xi)=v_{R^\newsharp}(\xi)+\big(\tilde{q}_{l-1}(\xi)-v_{R^\newsharp}(\xi)\big)$, or equivalently,
\begin{equation*}
\tilde{q}_{l|F}= v_{R^\newsharp|F}+\sum_{\xi \in \cG_{\bp^\newsharp}(F) \cap \partial_l F}\big(\tilde{q}_{l-1}(\xi)-v_{R^\newsharp}(\xi)\big)
\psi_\xi^F \;,
\end{equation*}
where $\psi_\xi^F \in \mathbb{Q}_{\bp^\newsharp}(F)$ denotes the Lagrangian function associated with the node $\xi$ of the LGL grid
on $F$ of order $\bp^\newsharp$. Thus, using once more \eqref{eq:basic.13}, we obtain
\begin{equation*}
\Vert v_R - \tilde{q}_l \Vert_{0,F}^2 \lsim \Vert v_R - v_{R^\newsharp} \Vert_{0,F}^2 +
\sum_{\xi \in \cG_{\bp^\newsharp}(F) \cap \partial_l F} |\tilde{q}_{l-1}(\xi)-v_{R^\newsharp}(\xi)|^2 w^{F,R^\newsharp}_\xi
=: A^2+B^2 \;.
\end{equation*}
As for the first quantity, there exists a sequence $R^i \in \cR(F)$, $i=0, \dots, m\leq d-l$
such that $R^0=R$, $R^m=R^\newsharp$ and $R^{i-1}\cap R^i \in \cF_{d-1}(F)$ for $i=1, \dots , m$. Thus,
$
A^2 \lsim \sum_{i=1}^m \Vert v_{R^i} - v_{R^{i-1}} \Vert_{0,F}^2 \leq \sum_{G \in \cF_{d-1}(F)} \Vert \, [v]_G \, \Vert_{0,F}^2 \;.
$
On the other hand, using \eqref{eq:facets3} and \eqref{eq:facets5}, we have
\begin{eqnarray*}
B^2 &\leq& \sum_{F' \in \cF_{l-1}(F)}
\sum_{\ \xi \in \cG_{\bp}(F',R^\newsharp)} |\tilde{q}_{l-1}(\xi)-v_{R^\newsharp}(\xi)|^2 w^{F,R^\newsharp}_\xi \\
&\lsim& {\sf H}_F {\sf p}^{-2}_F \!\!\!\!\!\!\!\! \sum_{F' \in \cF_{l-1}(F)}
\sum_{\ \xi \in \cG_{\bp}(F',R^\newsharp)} |\tilde{q}_{l-1}(\xi)-v_{R^\newsharp}(\xi)|^2 w^{F',R^\newsharp}_\xi
\ \ \simeq \ \ {\sf H}_F {\sf p}^{-2}_F \!\!\!\!\!\!\!\!
\sum_{F' \in \cF_{l-1}(F)} \Vert v_{R^\newsharp} - \tilde{q}_{l-1} \Vert_{0,F'}^2 \;,
\end{eqnarray*}
where the last equivalence follows, as in the proof of the previous lemma, by observing that
$v_{R^\newsharp|F'} \in \mathbb{Q}_{\bp(F',R^\newsharp)}(F')$
whereas $\tilde{q}_{l-1|F'} \in \mathbb{Q}_{\bp^\newsharp(F')}(F') \subseteq \mathbb{Q}_{\bp(F',R^\newsharp)}(F')$. Thus,
\eqref{eq:boundb1} is proven. Finally, \eqref{eq:boundb2} is trivial.
\end{proof}

\begin{lemma}\label{lem:boundb3}
Let $F \in \cF_l$ for some $0 \leq l \leq d-2$, and let $G \in \cF_{d-1}(F)$. Define
$\bp^\uparrow=\displaystyle{\max_{R \in \cR(G)}\bp(R)}$ (componentwise). Then,
\begin{equation*}
\left( {\sf H}_G {\sf p}^{-2}_G
\right)^{d-l-1}
 \Vert v \Vert_{0,F}^2 \lsim \Vert v \Vert_{0,G}^2 \qquad
\forall v \in \mathbb{Q}_{\bp^\uparrow}(G) \;.
\end{equation*}
\end{lemma}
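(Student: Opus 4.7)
The plan is to establish the inequality by iterating the trace estimate (Property~\ref{prop:boundaru-bound}) along a chain of nested facets that connects $F$ to $G$, dropping one dimension at a time. Since $\dim F = l$ and $\dim G = d-1$, the codimension of $F$ inside $G$ equals $d-l-1$, which is exactly the exponent appearing on the left-hand side. This makes the strategy natural: each step of the chain should contribute one factor of $({\sf p}+1)/\sqrt{{\sf H}}$ (squared, $({\sf p}^2/{\sf H})$) to the bound.

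First I would fix a chain of facets
\[
F = F_0 \subset F_1 \subset \dots \subset F_{d-l-1} = G,
\]
with $F_{i-1} \in \cF_{\dim F_i - 1}(F_i)$ for each $i$. Such a chain exists because $F$ and $G$ are facets of a common hyper-rectangle and one may successively un-freeze one coordinate at a time. Each $F_i$ is itself a hyper-rectangle of dimension $l+i$ with side lengths $\simeq {\sf H}$ (by \eqref{eq:facets4}), and the restriction $v|_{F_i}$ is a tensor-product polynomial of coordinatewise degrees bounded componentwise by $p^*$, hence of order $\simeq {\sf p}$ in each direction. The trace inequality \eqref{eq:basic.4quater} (and the tensor-product / facet-restriction machinery recalled in Sect.~\ref{subsec:sdgtoscg-1}) applies verbatim to each pair $F_{i-1} \subset F_i$, viewing $F_i$ as the ambient hyper-rectangle and $F_{i-1}$ as one of its faces. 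This gives
\[
\Vert v \Vert_{0,F_{i-1}} \;\leq\; \frac{p^*_{k_i}+1}{\sqrt{H_{k_i}}} \, \Vert v \Vert_{0,F_i} \;\lsim\; \frac{{\sf p}}{\sqrt{{\sf H}}} \, \Vert v \Vert_{0,F_i},
\]
where $k_i$ is the coordinate direction being frozen at step $i$, and the last inequality uses \eqref{eq:facets4}.

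Composing these $d-l-1$ inequalities and then squaring yields
\[
\Vert v \Vert_{0,F}^2 \;\lsim\; \left( \frac{{\sf p}^2}{{\sf H}} \right)^{d-l-1} \Vert v \Vert_{0,G}^2,
\]
which, upon multiplying both sides by $({\sf H}/{\sf p}^2)^{d-l-1}$, is exactly the claim.

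The only mildly delicate point is to make sure that \eqref{eq:basic.4quater} is legitimately applied at each step of the chain, i.e., that the one-dimensional trace estimate \eqref{eq:basic.4} indeed tensorizes to a face-to-face inequality on the intermediate lower-dimensional rectangles $F_i$. This is addressed by the remark in Sect.~\ref{sect2.0} that LGL-grids, discrete inner products and norms restrict to any facet in the obvious way with analogous properties; in particular the derivation of \eqref{eq:basic.4quater} from \eqref{eq:basic.4} goes through with $d$ replaced by $l+i$. All remaining constants are absorbed into $\lsim$ via the quasi-uniformity assumptions \eqref{eq:setting.50} together with the representative values in \eqref{eq:facets4}.
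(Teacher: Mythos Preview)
Your proof is correct, but it takes a different route from the paper's own argument. The paper proceeds by invoking the discrete LGL norm equivalence \eqref{eq:basic.13} on $G$, dropping all nodes not lying on $F$, and then using the weight factorization \eqref{eq:facets3} together with the boundary-weight estimate \eqref{eq:facets5} to convert $w^G_\xi$ into $({\sf H}/{\sf p}^2)^{d-l-1} w^F_\xi$; a second application of \eqref{eq:basic.13} on $F$ then gives the result. In contrast, you never touch the LGL quadrature weights: you iterate the continuous trace inequality \eqref{eq:basic.4quater} along a chain of facets $F=F_0\subset\cdots\subset F_{d-l-1}=G$, picking up one factor ${\sf p}/\sqrt{{\sf H}}$ at each step. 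Your argument is slightly more elementary in that it relies only on Property~\ref{prop:boundaru-bound} and the local comparability \eqref{eq:facets4}, whereas the paper's approach keeps everything in the LGL-weight framework that pervades Sect.~\ref{subsec:sdgtoscg-1} and makes the origin of the exponent $d-l-1$ transparent via the weight product structure. Both are equally short and yield the same dependence on ${\sf H}$ and ${\sf p}$.
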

\proof Using several times \eqref{eq:facets3} and \eqref{eq:facets5}, as well as \eqref{eq:basic.13}, we have
\begin{eqnarray*}
\Vert v \Vert_{0,G}^2 \ \simeq \sum_{\xi \in \cG_{\bp^\uparrow}(G)} |v(\xi)|^2 w^G_\xi
&\geq& \sum_{\xi \in \cG_{\bp^\uparrow}(G)\cap F} |v(\xi)|^2 w^G_\xi \\
&\simeq& \left( {\sf H}_G {\sf p}^{-2}_G
\right)^{d-l-1} \!\!\!\!\! \sum_{\xi \in \cG_{\bp^\uparrow}(F)} |v(\xi)|^2 w^F_\xi
\ \simeq \ \left( {\sf H}_G {\sf p}^{-2}_G
\right)^{d-l-1} \Vert v \Vert_{0,F}^2 \;. \qquad \endproof
\end{eqnarray*}

We are now ready to prove condition \eqref{eq:asm6}.
\begin{proposition}\label{prop:jackson_b}
For $b_\delt$ given in Definition~\ref{def:b_delta} and for $\tilde{Q}$ given in Definition~\ref{def:Qtilde-scg}, one has
\begin{equation*}
b_\delt (v-\tilde{Q} v,v-\tilde{Q} v) \lsim a_\delta(v,v) \qquad \forall v \in V_\delta \;.
\end{equation*}
\end{proposition}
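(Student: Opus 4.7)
The plan is to start from the estimate of Lemma \ref{lem:boundb1} and then unroll the recursion provided by Lemma \ref{lem:boundb2}, starting at $l=d-1$ and proceeding down to $l=0$ where one invokes the vertex bound \eqref{eq:boundb2}. At each level the right-hand side splits into already-good jump contributions on $(d-1)$-faces and a deeper recursion term that carries one extra factor ${\sf H}/{\sf p}^2$. After fully unfolding, I expect the typical contribution to take the form
\begin{equation*}
\omega_F\,\Bigl(\frac{{\sf H}}{{\sf p}^2}\Bigr)^{d-1-l}\,\|\,[v]_G\,\|_{0,F_l}^2 ,
\end{equation*}
where $F\in\cF_{d-1}$ is the outer face coming from Lemma \ref{lem:boundb1}, $F_l\in\cF_l$ is an intermediate facet reached after $d-1-l$ recursion steps (with $F_l\subset\dots\subset F_{d-2}\subset F$), and $G\in\cF_{d-1}$ is a face with $G\supset F_l$ (or $F_l\subset G\subset\partial\Omega$ when the jump reduces to a trace).

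The decisive step is then the inverted form of Lemma \ref{lem:boundb3}, which gives
\begin{equation*}
\|\,[v]_G\,\|_{0,F_l}^2 \lsim \Bigl(\frac{{\sf p}^2}{{\sf H}}\Bigr)^{d-1-l}\,\|\,[v]_G\,\|_{0,G}^2 .
\end{equation*}
Thus the accumulated anisotropic factor $({\sf H}/{\sf p}^2)^{d-1-l}$ is cancelled exactly by this pull-back factor; using the local comparability $\omega_F\simeq\omega_G$ implied by \eqref{eq:facets6}, each term is absorbed into $\omega_G\,\|\,[v]_G\,\|_{0,G}^2$, i.e.\ precisely a piece of the jump seminorm defining $a_\delta(v,v)$ in \eqref{eq:conf.10}. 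The vertex contributions coming from \eqref{eq:boundb2} at the bottom of the recursion are handled in the same fashion by applying Lemma \ref{lem:boundb3} at $l=0$, where $\|\cdot\|_{0,F_0}^2$ reduces to the squared point value at the vertex. Since only the jump part of $a_\delta$ is used, no gradient contribution is even needed on the right-hand side.

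The main obstacle will be the bookkeeping. One has to track carefully how many factors ${\sf H}/{\sf p}^2$ have been accumulated after $d-1-l$ unrollings and to match this count exactly with the power appearing in Lemma \ref{lem:boundb3}; a miscount by a single exponent would leave an uncontrolled polynomial factor. One also has to bound the combinatorial multiplicities, i.e.\ the number of descent chains $F=F_{d-1}\supset F_{d-2}\supset\dots\supset F_l$ appearing in the unrolling and the number of neighbouring $(d-1)$-faces $G$ attached to each $F_l$; under the quasi-uniformity and grading assumption \eqref{eq:setting.50} all these counts are bounded by a constant depending only on $d$. Finally, the local comparability of mesh sizes and polynomial degrees \eqref{eq:facets4}--\eqref{eq:facets6} legitimises the use of a single representative pair $({\sf H},{\sf p})$ on each cluster of intersecting facets, which is what keeps the hidden constants independent of $\delta$.
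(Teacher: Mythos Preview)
Your proposal is correct and follows essentially the same approach as the paper: start from Lemma~\ref{lem:boundb1}, unroll the recursion of Lemma~\ref{lem:boundb2} from $l=d-1$ down to the vertex case \eqref{eq:boundb2}, and at each stage use Lemma~\ref{lem:boundb3} to trade the accumulated factor $({\sf H}/{\sf p}^2)^{d-1-l}$ against the passage from $\|[v]_G\|_{0,F_l}$ back up to $\|[v]\|_{0,G}$, finishing with the local comparability $\omega_F\simeq\omega_G$ from \eqref{eq:facets6}. The only point the paper makes slightly more explicit is the regrouping step: after one unrolling the residual term involves $\|v_{R^\sharp(F)}-\tilde q_{l-1}\|_{0,F'}^2$, and before applying Lemma~\ref{lem:boundb2} again one relaxes this to $\sum_{R\in\cR(F')}\|v_R-\tilde q_{l-1}\|_{0,F'}^2$ so that the hypothesis $R\in\cR(F')$ of that lemma is met; this is covered by your bookkeeping remark but is worth stating when you write the argument out.
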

\proof First, we observe that the cardinality of any set $\cF_m(F)$ defined in \eqref{eq:facets1} is bounded by a quantity
depending only on the dimension $d$. We start from the bound given by Lemma~\ref{lem:boundb1}, and focus on any face
$F \in \cF_{d-1}$. Using inequality \eqref{eq:boundb1} of Lemma~\ref{lem:boundb2} with $l=d-1$, we get
\begin{equation*}
\sum_{R \in \cR(F)} \Vert v_R - \tilde{q}_{d-1} \Vert_{0,F}^2 \ \lsim \
\Vert \, [v] \, \Vert_{0,F}^2 \ \ + \ \
{\sf H}_F {\sf p}^{-2}_F \!\!\!\!\!\!\!\!
\sum_{F' \in \cF_{d-2}(F)} \Vert v_{R^\newsharp(F)} - \tilde{q}_{d-2} \Vert_{0,F'}^2 \;.
\end{equation*}
A further application of Lemma~\ref{lem:boundb2} to each summand on the right hand side of the above inequality, now with $l=d-2$, yields taking into account \eqref{eq:facets4half}
\begin{eqnarray*}
\sum_{R \in \cR(F)} \Vert v_R - \tilde{q}_{d-1} \Vert_{0,F}^2 &\lsim&
\Vert \, [v] \, \Vert_{0,F}^2 \ \ + \ \
{\sf H}_F {\sf p}^{-2}_F \!\!\!\!\!\!\!\!
\sum_{F' \in \cF_{d-2}(F)} \sum_{\ G \in \cF_{d-1}(F)} \Vert \, [v]_G \, \Vert_{0,F'}^2 \\
&& +
\left( {\sf H}_F {\sf p}^{-2}_F
\right)^2 \!\!\!\!\!\!\!\! \sum_{F' \in \cF_{d-2}(F)}
\sum_{F'' \in \cF_{d-3}(F')} \Vert v_{R^\newsharp(F')} - \tilde{q}_{d-3} \Vert_{0,F''}^2 \;.
\end{eqnarray*}
Lemma~\ref{lem:boundb3} with $l=d-2$ and \eqref{eq:facets4half} yield
${\sf H}_F {\sf p}^{-2}_F
\Vert \, [v]_G \, \Vert_{0,F'}^2 \lsim \Vert \, [v] \, \Vert_{0,G}^2$.
At this point, let us introduce the set $\cF^{\, \cap}_{d-1}(F)=\{G \in \cF_{d-1} \, : \, G \cap F \not = \emptyset \}$ of all faces
intersecting the face $F$, and let us observe that
\begin{equation*}
{\sf H}_F {\sf p}^{-2}_F \!\!\!\!\!\!\!\!
\sum_{F' \in \cF_{d-2}(F)} \sum_{\ G \in \cF_{d-1}(F)} \Vert \, [v]_G \, \Vert_{0,F'}^2
 \lsim \sum_{\ G \in \cF^{\, \cap}_{d-1}(F)} \Vert \, [v] \, \Vert_{0,G}^2 \;.
\end{equation*}
On the other hand, we have
\begin{equation*}
\sum_{F' \in \cF_{d-2}(F)}
\sum_{F'' \in \cF_{d-3}(F')} \Vert v_{R^\newsharp(F')} - \tilde{q}_{d-3} \Vert_{0,F''}^2 \lsim
\sum_{F'' \in \cF_{d-3}(F)} \sum_{\ R \in \cR(F'')} \Vert v_{R} - \tilde{q}_{d-3} \Vert_{0,F''}^2 \;.
\end{equation*}
Thus,
\begin{equation*}
\sum_{R \in \cR(F)} \Vert v_R - \tilde{q}_{d-1} \Vert_{0,F}^2 \ \ \lsim
\sum_{\ G \in \cF^{\, \cap}_{d-1}(F)} \Vert \, [v] \, \Vert_{0,G}^2 \ \ +\ \
\left( {\sf H}_F {\sf p}^{-2}_F
\right)^2 \!\!\!\!\!\!\! \sum_{F'' \in \cF_{d-3}(F)} \sum_{\ R \in \cR(F'')} \Vert v_{R} - \tilde{q}_{d-3} \Vert_{0,F''}^2 \;.
\end{equation*}
We now proceed recursively, using Lemmas~\ref{lem:boundb2} and~\ref{lem:boundb3} with $l=d-3, d-4, \dots$. At the $j$-th
stage of recursion, we obtain
\begin{equation*}
\sum_{R \in \cR(F)} \Vert v_R - \tilde{q}_{d-1} \Vert_{0,F}^2 \lsim
\sum_{\ G \in \cF^{\, \cap}_{d-1}(F)} \Vert \, [v] \, \Vert_{0,G}^2 \ \ +\ \
\left( {\sf H}_F {\sf p}^{-2}_F
\right)^{j-1} \!\!\!\!\!\! \sum_{F' \in \cF_{d-j}(F)} \sum_{\ R \in \cR(F')} \Vert v_{R} - \tilde{q}_{d-j} \Vert_{0,F'}^2 \;.
\end{equation*}
When $j=d$, we use \eqref{eq:boundb2} and \eqref{eq:facets5} to finally get
\begin{equation*}
\sum_{R \in \cR(F)} \Vert v_R - \tilde{q}_{d-1} \Vert_{0,F}^2 \lsim
\sum_{\ G \in \cF^{\, \cap}_{d-1}(F)} \Vert \, [v] \, \Vert_{0,G}^2 \;.
\end{equation*}
At last, we observe that $\omega_G \simeq \omega_F$ for all $G \in \cF^{\, \cap}_{d-1}(F)$ by \eqref{eq:facets4half},
so that, going back to Lemma~\ref{lem:boundb1}, we conclude that
\begin{equation*}
b_\delt (v-\tilde{Q} v,v-\tilde{Q} v) \lsim \sum_{G \in \cF_{d-1}} \omega_G \Vert \, [v] \, \Vert_{0,G}^2
\ \leq \ a_\delta(v,v) \;. \qquad \qquad \endproof
\end{equation*}

\section{Proof of Theorem~\ref{th:2}}
\label{sec:proof-II}

The proof of Theorem~\ref{th:2} follows again from Corollary~\ref{cor:asm1} once we have verified the ASM conditions
for the respective ingredients given in Section~\ref{sec:conf}. Since now $\tilde V=V_{h,D,\bp}\not\subset V=V_\delta^c$
we need to address {\bf ASM2-3} in full while {\bf ASM1} is trivial since all spaces are conforming
and the standard energy bilinear form can be used.

We verify first {\bf ASM2}.

\begin{proposition}\label{prop:asimeqb}
One has
$\,a(v,v) \lsim b_2(v,v)$ for all $ v \in V$.
\end{proposition}
\proof
We first observe that $a(v,v) \simeq a(v_h,v_h)$ for all $v \in V_\delta^c$ due to Property~\ref{prop:Nhequivalence-multi}.
With the notation of Section~\ref{sect: auxbilforms}, the idea is to bound the terms
\begin{align*}
a_{R,k,S_\bell}(v_h,v_h):=
\int_{S_{\bell}} \abs{\partial_{x_{k}} v_h}^2 \, dx
=\int_{S_{\bell,k}'} \left( \int_{I_{k,\ell_k}} \abs{\partial_{x_{k}} v_h(x_k,x')}^2 \, dx_k \right) \, dx'
\end{align*}
by using quadrature in all but the $k$-th variable while keeping the integral
with respect to the $k$-th variable for $S_\bell\in \cT_{\bp,k}^{(0)}(R)$, the ``anisotropic sub-cells'', and applying an inverse estimate for $S_\bell\in \cT_{\bp,k}^{(1)}(R)$,
the ``isotropic sub-cells''.
Specifically, using the tensorized trapezoidal rule for integration over $S_{\bell,k}'$, which is the finite-element lumped mass matrix approximation~\cite[(4.4.44) on p. 220]{CaHuQuZa06}, yields the terms $b^{(0)}_{R,k,S_{b\ell}}(v ,v )$ in \eqref{eq:def_b_RkSl_0}.
Hence we still have $a_{R,k,S_{\bell}}(v_h,v_h)\lsim b^{(0)}_{R,k,S_{\bell}}(v ,v )$ for all $S_\bell\in \cT_{\bp,k}^{(0)}(R)$.
The same local relations hold for the sub-cells in $\cT_{\bp,k}^{(1)}(R)$ since we have used an inverse estimate as in the original
version of $b_\delt(\cdot,\cdot)$. \endproof

The remaining ASM conditions involve the operators $Q$, defined by \eqref{def:operQ}, and $\tilde Q$
which is yet to be defined and is only needed for the analysis.

The definition of the operator $\tilde Q$ follows the same lines as the one of $Q$. Given any $v \in V$ and any $R \in \cR$,
we set $v_R = v_{|R} \in \mathbb{Q}_\bp(R)$. Then, the chain \eqref{eq:def.vstarz}-\eqref{eq:def.vstarR} is replaced
by the following one:
\begin{equation}\label{eq:def.vz}
{v}_z^* := \cI^R_{\bp_z^*} \left( \Phi_z {v}_R \right) \in \mathbb{Q}_{\bp_z^*}(R)
\qquad \text{and} \qquad
\tilde{v}_z^* := \cI^R_{h,D,\bp_z^*}( \cI^R_{h,\bp_z^*} \, {v}_z^*) \in V_{h,D,\bp^*_z}(R) \;.
\end{equation}
Summing over the vertices of $R$, we define
\begin{equation}\label{eq:def.vR}
{v}_R^*:= \sum_{z \in \cF_0(R)} {v}_z^* \in \mathbb{Q}_{\bp}(R)
\qquad \text{and} \qquad
\tilde{Q}_R {v}_R := \tilde{v}_R^* = \sum_{z \in \cF_0(R)} \tilde{v}_z^* \in V_{h,D,\bp}(R) \;.
\end{equation}
As above, one easily confirms interelement continuity, which suggests defining
the operator $\tilde{Q} : {V} \to \tilde{V}$ by
\begin{equation}
\label{def:opertildeQ}
(\tilde{Q} {v})_{|R}:=\tilde{Q}_R {v}_R = \tilde{v}_R^* \qquad \quad \forall R \in \cR \; \qquad
\forall {v} \in {V} \;,
\end{equation}
where $\tilde{v}_R^*$ is defined in \eqref{eq:def.vR}.

To complete the proof of Theorem~\ref{th:2} it remains to verify {\bf ASM3} which, in turn,
requires establishing the following two results.

\begin{proposition}\label{propos:forQ.1}
The operators $Q$ and $\tilde Q$ are linear and satisfy the continuity assumption \eqref{eq:asm_bound_Qtilde}.
\end{proposition}

\begin{proposition}\label{propos:forQ.3}
The operator $Q$ and $\tilde Q$ satisfy the Jackson conditions \eqref{eq:asm_approx_Qtilde}, i.e., one has
\begin{equation*}
b_2(\tilde v - Q \tilde v,\tilde v - Q \tilde v) \lsim |\tilde v|_{1,\Omega}^2 \quad \forall \, \tilde v \in \tilde V \;,
\qquad b_2( v - \tilde Q v, v - \tilde Q v) \lsim | v|_{1,\Omega}^2 \qquad \forall\, v \in V \;,
\end{equation*}
where the multiplicative constant in the above estimate depends on the constant $C_\textnormal{aspect}$ in \eqref{aspect}.
\end{proposition}

The remainder of this section is devoted to the proofs of Propositions~\ref{propos:forQ.1} and~\ref{propos:forQ.3}
which require several further technical prerequisites.

\subsection{The role of Property~\ref{prop:Nhequivalence-multi} and related facts}\label{ssec:stab}
The following stability estimates draw in essential way on Property~\ref{prop:Nhequivalence-multi}
and its univariate counterpart. A major issue is to interrelate grids of different orders.

We begin with
a relevant property that concerns the locally uniform equivalence of LGL grids of comparable order and
refer to \cite{BCD2013} for the proof.
\begin{property}
\label{prop:lgl-uniequiv}
Assume that $c\, p \leq q \leq p$ for some fixed constant $c>0$. Let $I=[a,b]$. Then, $\cG_q(I)$ and $\cG_p(I)$ are
locally $(A,B)$-uniformly equivalent, with $A$ and $B$ depending on the proportionality factor $c$ but independent
of $q$, $p$ and $H=b-a$.
\endproof
\end{property}

As a consequence of Properties~\ref{prop:lgl-uniequiv} and~\ref{lemdyadic-2} one obtains the following immediate extension to
the associated dyadic grids.
\begin{corollary}\label{cor:dyadic-uniequiv}
Assume that $c\, p \leq q \leq p$ for some fixed constant $c>0$. Then, $\cD_q(I)$ is locally $(A,B)$-uniformly equivalent
to both $\cG_p(I)$ and $\cD_p(I)$, with $A$ and $B$ depending on the proportionality factor $c$ but not on $p$ and
$H$. \endproof
\end{corollary}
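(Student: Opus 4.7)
The plan is to establish that local uniform equivalence is transitive and then chain the already-proven equivalences. Concretely, suppose $\cG^{(1)}$ and $\cG^{(2)}$ are locally $(A_1,B_1)$-uniformly equivalent and $\cG^{(2)}$ and $\cG^{(3)}$ are locally $(A_2,B_2)$-uniformly equivalent. Whenever $I\in \cT(\cG^{(1)})$ and $J\in \cT(\cG^{(3)})$ satisfy $I\cap J\neq \emptyset$, I would pick any $x\in I\cap J$ and note that $x$ lies in some $K\in \cT(\cG^{(2)})$; then $K$ meets both $I$ and $J$, so
\begin{equation*}
A_1A_2 \;\leq\; \frac{|I|}{|K|}\cdot\frac{|K|}{|J|} \;=\; \frac{|I|}{|J|} \;\leq\; B_1B_2,
\end{equation*}
which gives local $(A_1A_2,B_1B_2)$-uniform equivalence of $\cG^{(1)}$ and $\cG^{(3)}$.

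With transitivity in hand, the corollary follows by chaining three equivalences. By Property~\ref{lemdyadic-2} applied to the parameter $q$, the dyadic mesh $\cD_q$ is locally uniformly equivalent to $\cG_q$, with constants depending only on $\alpha$ and $C_g$ (hence independent of $p,q$). By Property~\ref{prop:lgl-uniequiv}, the hypothesis $cp\leq q\leq p$ yields local uniform equivalence of $\cG_q$ and $\cG_p$ with constants depending on $c$. Applying transitivity once gives equivalence of $\cD_q$ and $\cG_p$. Applying Property~\ref{lemdyadic-2} once more, now to the parameter $p$, provides equivalence of $\cG_p$ and $\cD_p$, and a second transitivity step yields equivalence of $\cD_q$ and $\cD_p$. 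In both final constants, the dependence on $c$ enters only through the middle link.

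There is essentially no obstacle here: the only thing to check with any care is that the intermediate grid actually contains a witness interval $K$ that meets both outer intervals, which is immediate from nonempty intersection. The constants $A$ and $B$ in the conclusion are explicit products of the constants furnished by Properties~\ref{prop:lgl-uniequiv} and~\ref{lemdyadic-2}, so their dependence is exactly as claimed—on $c$, $\alpha$, and $C_g$, but not on $p$.
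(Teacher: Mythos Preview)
Your proof is correct and follows exactly the route the paper intends: it states the corollary as ``an easy consequence of Properties~\ref{prop:lgl-uniequiv} and~\ref{lemdyadic-2}'' without spelling out the chaining, and your transitivity argument is precisely the missing detail. The witness-interval observation is the only point requiring any care, and you handle it correctly.
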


Next, recall that Property~\ref{prop:Nhequivalence-multi} follows from its univariate counterpart.
\begin{property}
\label{prop:Nhequivalence-uni}
One has
\begin{equation}\label{eq:basic.8}
\Vert v \Vert_{0,I} \simeq \Vert v_h \Vert_{0,I},
\qquad \Vert v' \Vert_{0,I}
\simeq \Vert v_h' \Vert_{0,I}
\qquad \forall v \in \mathbb{P}_p(I)\;,
\end{equation}
where the involved constants are independent of $p$ and $H$.
\endproof
\end{property}

Note that taking as $v$ in \eqref{eq:basic.8} each Lagrange basis function
at the LGL nodes and using \eqref{eq:basic.1bis} and \eqref{grid}, it is easily seen that
the size of each interval $I_j$ is comparable to that of the LGL weight associated
with any of its endpoints, in the sense that the following bounds hold, uniformly in $p$ and $H$:
\begin{equation}
\label{eq:basic.7}
1 \lsim \min_{1\leq j \leq p} \frac{{h}_{j}}{{w}_{j}} \ \leq \
\max_{1\leq j \leq p} \frac{{h}_{j}}{{w}_{j}} \lsim 1 \;.
\end{equation}
As a consequence, the second relation in \eqref{eq:basic.8} together with \eqref{eq:basic.7} and \eqref{eq:basic.1bis} provide a simple proof
of the inverse inequality \eqref{eq:basic.9ter}. Indeed, one has
\begin{equation*}
\Vert v_h' \Vert_{0,I}^2 = \sum_{j=1}^p \left( \frac{{v}({\xi}_j)-{v}({\xi}_{j-1})}{{h}_j} \right)^2 {h}_j
\leq \frac2{{h}_0} {v}^2({\xi}_0)
 + \sum_{j=1}^{p-1} \left( \frac2{{h}_j} + \frac2{{h}_{j+1}} \right) {v}^2({\xi}_j) +
\frac2{{h}_{p}} {v}^2({\xi}_p) \;.
\end{equation*}

We address next the continuity of various univariate interpolation operators in certain Sobolev norms or seminorms.
The first result is classical.

\begin{lemma}
\label{lemma:forQ.1bis}
Let $\cG$ be any ordered grid in $I$, which defines a partition $\cT=\cT(\cG)$,
and let ${\cI}_{\cG} : H^1(I) \to V_h(\cT)$ be the associated piecewise linear interpolation operator. Then,
\begin{equation*}
| {\cI}_{\cG} v |_{1,I} \leq | v |_{1,I} \qquad \forall v \in H^1(I) \;.
\end{equation*}
\end{lemma}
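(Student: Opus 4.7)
The plan is to argue locally on each subinterval of the partition $\cT=\cT(\cG)$ and then sum. First, I would observe that in one dimension we have the Sobolev embedding $H^1(I) \hookrightarrow C^0(\overline{I})$, so the pointwise values $v(\xi_j)$ at the grid nodes make sense and $\cI_\cG v$ is well-defined.

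Next, on each subinterval $I_j=[\xi_{j-1},\xi_j]$ with length $h_j$, the function $\cI_\cG v$ is affine with constant derivative
\[
(\cI_\cG v)'_{|I_j} = \frac{v(\xi_j)-v(\xi_{j-1})}{h_j},
\]
so that
\[
\int_{I_j}\bigl|(\cI_\cG v)'\bigr|^2\,dx = \frac{\bigl(v(\xi_j)-v(\xi_{j-1})\bigr)^2}{h_j}.
\]
By the fundamental theorem of calculus, $v(\xi_j)-v(\xi_{j-1})=\int_{I_j} v'(x)\,dx$, and the Cauchy--Schwarz inequality gives
\[
\bigl(v(\xi_j)-v(\xi_{j-1})\bigr)^2 \leq h_j \int_{I_j} \bigl(v'(x)\bigr)^2\,dx.
\]
Combining the two identities yields
\[
\int_{I_j}\bigl|(\cI_\cG v)'\bigr|^2\,dx \leq \int_{I_j}\bigl(v'(x)\bigr)^2\,dx.
\]

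Finally, I would sum this elementwise bound over $j=1,\dots,p$ to obtain $|\cI_\cG v|_{1,I}^2\leq|v|_{1,I}^2$, which is the claim. No step here is a real obstacle: the argument is essentially the classical $H^1$-stability of piecewise linear nodal interpolation in one dimension, and the only subtlety worth flagging is the preliminary use of the 1D Sobolev embedding to justify nodal evaluation for $H^1$ functions.
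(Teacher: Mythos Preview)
Your proposal is correct and follows essentially the same argument as the paper: compute the constant derivative of the affine interpolant on each subinterval, apply the fundamental theorem of calculus together with Cauchy--Schwarz to bound $(v(\xi_j)-v(\xi_{j-1}))^2$ by $h_j\int_{I_j}(v')^2$, and sum. Your explicit mention of the 1D Sobolev embedding to justify nodal evaluation is a small addition the paper omits, but otherwise the proofs coincide.
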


\begin{lemma}\label{lemma:interp-unigrids}
Let $\cG$ and $\tilde{\cG}$ be ordered grids in $I$, with associated partitions $\cT=\cT(\cG)$ and
$\tilde{\cT}=\cT(\tilde{\cG})$. Assume that $\cG$ and $\tilde{\cG}$ are locally $(A,B)$-uniformly equivalent.
If ${\cI}_{\cG} : H^1(I) \to V_h(\cT)$ is the piecewise linear interpolation operator associated with $\cG$, one has
\begin{equation*}
\Vert {\cI}_{\cG} v \Vert_{0,I} \lsim \Vert v \Vert_{0,I} \qquad \forall v \in V_h(\tilde{\cT}) \;,
\end{equation*}
where the constant in the inequality depends only on the parameters $A$ and $B$.
\end{lemma}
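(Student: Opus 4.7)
The plan is to reduce the global $L^2$ estimate to local estimates on each interval $I_j \in \cT$ and then aggregate using the finite-overlap property that comes from local uniform equivalence.

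First, I would exploit that $\cI_\cG v$ is affine on each $I_j=[\xi_{j-1},\xi_j] \in \cT$, so an explicit computation yields
\begin{equation*}
\Vert \cI_\cG v \Vert_{0,I_j}^2 = \tfrac{|I_j|}{3}\bigl(v(\xi_{j-1})^2 + v(\xi_{j-1})v(\xi_j) + v(\xi_j)^2\bigr) \leq |I_j|\,\max\bigl(v(\xi_{j-1})^2, v(\xi_j)^2\bigr).
\end{equation*}
This reduces the task to bounding the pointwise values $|v(\xi_{j-1})|$ and $|v(\xi_j)|$ in terms of the $L^2$-norm of $v$ on a neighborhood.

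Next, since $v \in V_h(\tilde{\cT})$ is affine on every $\tilde{I}_l \in \tilde{\cT}$, the standard inverse inequality for linear functions on an interval of length $|\tilde{I}_l|$ gives $\Vert v \Vert_{L^\infty(\tilde{I}_l)}^2 \lsim |\tilde{I}_l|^{-1} \Vert v \Vert_{0,\tilde{I}_l}^2$. For each endpoint $\xi \in \{\xi_{j-1},\xi_j\}$, pick any $\tilde{I}_l$ containing $\xi$; this interval necessarily intersects $I_j$, so by the local $(A,B)$-uniform equivalence \eqref{gridunieq} we have $|\tilde{I}_l| \simeq |I_j|$. Combining with the previous step yields
\begin{equation*}
\Vert \cI_\cG v \Vert_{0,I_j}^2 \lsim |I_j|\cdot |\tilde{I}_l|^{-1}\Vert v \Vert_{0,\tilde{I}_l}^2 \lsim \Vert v \Vert_{0,\tilde{I}_l}^2,
\end{equation*}
with constants depending only on $A$ and $B$.

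Finally, I would sum over $j$. To avoid overcounting, I would verify the finite-overlap property: for any fixed $\tilde{I}_l$, the number of intervals $I_j \in \cT$ intersecting it is bounded by a constant depending only on $A$ and $B$. Indeed, any such $I_j$ satisfies $A|\tilde{I}_l| \leq |I_j| \leq B|\tilde{I}_l|$, and the $I_j$'s are pairwise disjoint and contained in a neighborhood of $\tilde{I}_l$ of length at most $(1+2B)|\tilde{I}_l|$, giving at most $(1+2B)/A$ such intervals. Summing the local bound and applying this bounded-overlap fact yields
\begin{equation*}
\Vert \cI_\cG v \Vert_{0,I}^2 = \sum_{j}\Vert \cI_\cG v \Vert_{0,I_j}^2 \lsim \sum_{l}\Vert v \Vert_{0,\tilde{I}_l}^2 = \Vert v \Vert_{0,I}^2.
\end{equation*}
The only delicate point is keeping track that the constants depend solely on $A$ and $B$; this is automatic once one observes that both the affine-function inverse inequality and the counting argument above have such constants. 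No additional structure on $\cG$ or $\tilde{\cG}$ (such as quasi-uniformity) is needed.
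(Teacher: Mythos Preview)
Your proof is correct and follows essentially the same strategy as the paper's: both reduce to bounding weighted pointwise values $|I_j|\,v(\xi_j)^2$ by the $L^2$-mass of $v$ near $\xi_j$, invoke the local $(A,B)$-equivalence to match interval lengths, and then aggregate via a finite-overlap count. The only cosmetic difference is that the paper bounds $v(\xi_j)^2$ directly by the neighboring nodal values $v(\eta_i)^2+v(\eta_{i+1})^2$ using the affine interpolation formula, whereas you route through the $L^\infty$-to-$L^2$ inverse inequality on the containing interval $\tilde I_l$.
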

\begin{proof}
Let ${\cG}= \{\xi_j \, : \, 0 \leq j \leq p \}$ and $\cT = \{I_j \, : \, 1 \leq j \leq p \}$,
with $h_j=\abs{I_j}=\xi_j - \xi_{j-1}$. For $0 \leq j \leq p$, let us define $h(\xi_j)={h}_j + {h}_{j+1}$, where we
set $h_0=h_{p+1}=0$.
Similarly, let $\tilde{\cG}= \{\eta_i \, : \, 0 \leq i \leq q \}$ and $\tilde{\cT} \{\tilde{I}_i \, : \, 1 \leq i \leq q \}$, with $\tilde{h}_i=\abs{\tilde{I}_i}=\eta_i - \eta_{i-1}$, and let
$\tilde{h}(\eta_i)$ be defined in a manner similar to $h(\xi_j)$.
Given any $v \in V_h(\tilde{\cT})$, we have
\begin{equation*}
\Vert {\cI}_{\cG} v \Vert_{0,I}^2 = \sum_{j=1}^p \Vert {\cI}_h v \Vert_{0,I_j}^2
\simeq \sum_{j=1}^p \left(v^2(\xi_j)+v^2(\xi_{j-1})\right) h_j = \sum_{j=0}^p v^2(\xi_j) h(\xi_j) \;,
\end{equation*}
and
\begin{equation*}
\Vert v \Vert_{0,I}^2 = \sum_{i=1}^q \Vert v \Vert_{0,\tilde{I}_i}^2
\simeq \sum_{i=1}^q \left(v^2(\eta_i)+v^2(\eta_{i-1})\right) \tilde{h}_i = \sum_{i=0}^q v^2(\eta_i) \tilde{h}(\eta_i) \;.
\end{equation*}
Now, for any $\xi_j$ there exist $\eta_i$ and $\theta \in [0,1)$ such that
$v(\xi_j)=(1-\theta) v(\eta_i) + \theta v(\eta_{i+1})$, whence $v^2(\xi_j) \leq v^2(\eta_i) + v^2(\eta_{i+1})$.
If $\theta \in (0,1)$, then $\xi_j \in (\eta_i, \eta_{i+1})$, hence both $I_j$ and $I_{j+1}$ intersect
$\tilde{I}_i$. By the assumption of locally uniform equivalence of the two grids, we obtain $\abs{I_j} \lsim \abs{\tilde{I}_i}$
and $\abs{I_{j+1}} \lsim \abs{\tilde{I}_i}$, whence $h(\xi_j) \lsim \tilde{h}_i \leq \min
\left(\tilde{h}(\eta_i),\tilde{h}(\eta_{i+1})\right)$. On the other hand, if $\theta=0$, i.e., $\xi_j=\eta_i$, then
$I_j$ intersects $\tilde{I}_i$ and $I_{j+1}$ intersects $\tilde{I}_{i+1}$ (with the obvious adaptation if $\xi_j$ is a
boundary point), thus $\abs{I_j} \lsim \abs{\tilde{I}_i}$ and $\abs{I_{j+1}} \lsim \abs{\tilde{I}_i}$,
which yields $h(\xi_j) \lsim \tilde{h}(\eta_i)$. This completes the proof. \end{proof}

To proceed recall the definitions of the interpolation operators $\cI_p$, $\cI_{h,p}$, $\cI_{h,D,p}$,
in \eqref{def-tens-interp1}, \eqref{def-tens-interp3}, and \eqref{eq:basic.66}, respectively.

\begin{lemma}\label{lemma:forQ.1ter}
For any $p>0$, the operator $\cI_p$ satisfies
\begin{equation*}
| \cI_p v |_{1,I} \lsim | v |_{1,I} \;, \qquad \forall v \in H^1(I) \;,
\end{equation*}
with a constant that does not depend â on $p$.
\end{lemma}
\begin{proof}
The result is classical (see, e.g.,\cite{CaHuQuZa06}). It can be derived from Property~\ref{prop:Nhequivalence-uni}
and Lemma~\ref{lemma:forQ.1bis} observing that $\cI_p v = \cI_p(\cI_{h,p}v)$. \end{proof}

\begin{property}\label{prop:unif-cont-1D}
{\rm (\cite[Remark 13.5]{BeMa97})} Assume that $c\, p \leq q \leq p$ for some fixed constant $c>0$. Then
\begin{equation*}
\Vert \cI_q v \Vert_{0,I} \lsim \Vert v \Vert_{0,I} \quad \forall v \in \mathbb{P}_p(I) \;,
\end{equation*}
with a constant depending on the proportionality factor $c$ but not on $p$.
\end{property}

\begin{lemma}
\label{lemma:forQ.3}
Assume that $c\, p \leq q \leq p$ for some fixed constant $c>0$. Then,
\begin{equation*}
| \cI_{h,q} v |_{m,I} \lsim | v |_{m,I} \quad \forall v \in \mathbb{P}_p(I)\;, \qquad m=0,1 \;,
\end{equation*}
with a constant depending on the proportionality factor $c$ but not on $p$.
\end{lemma}
\begin{proof}
For $m=0$, we observe that $\cI_{h,q} v = \cI_{h,q}(\cI_q v)$, so that
$ \Vert \cI_{h,q} v \Vert_{0,I} \lsim \Vert \cI_q v \Vert_{0,I} \lsim \Vert v \Vert_{0,I}$
by Properties~\ref{prop:Nhequivalence-uni} and~\ref{prop:unif-cont-1D}.
The result for $m=1$ is included in Lemma~\ref{lemma:forQ.1bis}.
\end{proof}

\begin{lemma}\label{lemma:forQ.2}
Assume that $c\, p \leq q \leq p$ for some fixed constant $c>0$. Then, for $m=0,1$ one has
\begin{eqnarray*}
| \cI_q v |_{m,I} \ \ \simeq \ \ | \cI_{h,q} v |_{m,I} &\lsim & |v|_{m,I} \quad \forall v \in V_{h,D,p}(I) \;, \\[5pt]
| \cI_{h,D,q} v |_{m,I} & \lsim & |v|_{m,I} \quad \forall v \in V_{h,D,p}(I) \;, \\[5pt]
| \cI_{h,D,q} v |_{m,I} & \lsim & | v |_{m,I} \quad \forall v \in V_{h,p}(I)\;.
\end{eqnarray*}
\end{lemma}
\begin{proof}
The results for $m=0$ follow from Lemma~\ref{lemma:interp-unigrids}
applied in various combinations to the grids $\cG_q(I)$, $\cG_p(I)$, $\cD_q(I)$ and $\cD_p(I)$, that are
locally uniformly equivalent to each other by Corollary~\ref{cor:dyadic-uniequiv}. The results for $m=1$ follow again from
Lemma~\ref{lemma:forQ.1bis}.
\end{proof}

We turn now to the multivariate case. Using Lemmas~\ref{lemma:forQ.1bis} and~\ref{lemma:interp-unigrids},
Proposition~\ref{propos:tensorprop} yields the following general result.

\begin{property}\label{prop:interp-unigrids-multi}
For $1 \leq k \leq d$, let $\cG_k$ and $\tilde{\cG}_k$ be ordered grids in $I_k$,
with associated partitions $\cT_k$ and $\tilde{\cT}_k$, which are locally $(A,B)$-uniformly equivalent;
let ${\cI}_{\cG_k}= {\cI}_{\cG_k}^{I_k}: H^1(I_k) \to V_h(\cT_k)$ be the piecewise linear interpolation operator associated
with $\cT_k$. Consider the spaces $V_h(\cT):=\bigotimes_{k=1}^d V_h(\cT_k)$ and
$V_h(\tilde{\cT}):=\bigotimes_{k=1}^d V_h(\tilde{\cT}_k)$ of piecewise multi-linear functions on the Cartesian
partitions $\cT:=\bigtimes_{k=1}^d \cT_k$ and $\tilde{\cT}:=\bigtimes_{k=1}^d \tilde{\cT}_k$ of $R$. Then, the
piecewise multilinear interpolation operator $\cI_{\cG} = \cI_{\cG}^R:= \bigotimes_{k=1}^d {\cI}_{\cG_k}^{I_k} : C^0(R) \to V_h(\cT)$
satisfies
\begin{equation*}
\Vert {\cI}_{\cG} v \Vert_{m,R} \lsim \Vert v \Vert_{m,R} \qquad \forall v \in V_h(\tilde{\cT})\;, \qquad m=0,1 \;,
\end{equation*}
where the constant in the inequality depends only on the parameters $A$ and $B$. \endproof
\end{property}

From Lemma~\ref{lemma:forQ.2} and Proposition~\ref{propos:tensorprop}, we immediately get the following multidimensional
result.
\begin{property}
\label{property:forQ.1}
Assume that $c\, \bp \leq \bq \leq \bp$ for some fixed constant $c>0$. Then,
\begin{equation*}
| \cI_\bq v |_{m,R} \lsim | v |_{m,R} \quad \forall v \in V_{h,D,\bp}(R)\;,
\qquad m=0, 1 \;,
\end{equation*}
with a constant depending on the proportionality factor $c$ but not on $\bp$. \endproof
\end{property}

We are now prepared to complete the\\

\noindent
{\em Proof of Proposition~\ref{propos:forQ.1}:}
We treat only the operator $Q$. The argument for $\tilde Q$ is analogous.
We have to prove that $|Q \tilde{v}|_{1,\Omega} \lsim |\tilde{v} |_{1,\Omega}$ for all $\tilde{v} \in \tilde{V}$.
This follows if, for any $R \in \cR$, we prove that
\begin{equation}\label{eq:forQ.51bis}
| v_R^* |_{1,R} \lsim |\tilde{v}_R |_{1,R} \qquad \forall \tilde{v}_R \in V_{h,D,\bp}(R) \;.
\end{equation}
A classical mapping-and-scaling argument in Finite Element analysis tells us that this result holds provided it holds when $R$
is the reference element $\hat{R}=\bigtimes_{k=1}^d \hat I$, with $\hat I = [-1,1]$. For this element, it is enough to prove that
\begin{equation}\label{eq:forQ.50}
| v^* |_{1,\hat R} \lsim \Vert \tilde{v} \Vert_{1,\hat R} \qquad \forall \tilde{v} \in V_{h,D,\bp}(\hat R)\;.
\end{equation}
Indeed, changing $\tilde v$ into $\tilde v + \lambda$, by any $\lambda \in \mathbb{R}$, does not change the left-hand side,
whence
$
| v^* |_{1,\hat R} \lsim \inf_{\lambda \in \mathbb{R}} \Vert \tilde v + \lambda \Vert_{1,\hat R} \lsim | \tilde v |_{1,\hat R} \;.
$
In order to establish \eqref{eq:forQ.50}, let us first consider a univariate function $\tilde v \in V_{h,D,p}(\hat I)$ and
let $\Phi$ denote the affine function taking the value $1$ at one endpoint of the interval and $0$ at the other one.
Let us prove that if $cp \leq q \leq p $ for some fixed constant $c>0$, one has
\begin{equation}\label{eq:forQ.51}
| \cI_{h,D,q}(\Phi \tilde v) |_{m,\hat I} \lsim \Vert \tilde v \Vert_{m,\hat I}\;, \qquad m=0,1 \;,
\end{equation}
where, of course, the involved constant depends on the proportionality factor $c$.
For $m=0$, we have
\begin{equation*}
\Vert \cI_{h,D,q}(\Phi \tilde v) \Vert_{0,\hat I}^2 \lsim \sum_{\zeta \in \cG_{D,q}(\hat I)}
(\Phi(\zeta) \tilde v(\zeta))^2 h_{D,q}(\zeta)
\lsim \sum_{\eta \in \cG_{D,p}(\hat I)} \tilde v(\eta)^2 h_{D,p}(\eta) \lsim \Vert \tilde v \Vert_{0,\hat I}^2 \;,
\end{equation*}
since $\Phi^2 \leq 1$ and $h_{D,q}(\zeta) \lsim h_{D,p}(\zeta)$ for all $\zeta \in \cD_{q}(\hat I)
\subseteq \cD_{p}(\hat I)$. For $m=1$, we have by Lemma~\ref{lemma:forQ.1bis},
$
| \cI_{h,D,q}(\Phi \tilde v) |_{1,\hat I} \lsim | \Phi \tilde v |_{1,\hat I} \lsim \Vert \tilde v \Vert_{1,\hat I},
$
where the last inequality holds since we are working on the reference element.
Hence, \eqref{eq:forQ.51} is proven. Using this result and Proposition~\ref{propos:tensorprop}, we obtain the bound
$
| \tilde{v}_z^* |_{1,\hat R} \lsim \Vert \tilde v \Vert_{1,\hat R}
$
for each function $\tilde{v}_z^*$ defined as in \eqref{eq:def.vstarz} on $\hat R$. Then, Property~\ref{property:forQ.1}
yields
$|{v}_z^* |_{1,\hat R} \lsim \Vert \tilde v \Vert_{ 1,\hat R}$,
and \eqref{eq:forQ.50} follows by the triangle inequality, since $|\cF_0(\hat R)| \simeq 1$.
\endproof

\begin{remark}\label{rem:Q}
Consider the operator $Q: \tilde V= W_{h,D,\bp} \to V= V_{h,D,\bp}$ introduced in Section~\ref{sect:aux-multilevel}, whose $H^1$-stability
is claimed in \eqref{eq:Q3bound} in Lemma~\ref{lem:Q}. Since this operator is defined in complete analogy to \eqref{def:operQ}, the proof of its stability is
similar to that of Proposition~\ref{propos:forQ.1} given above. Indeed,
the grids underlying the wavelets spanning the spaces $W_{h,D,p_k}(I_k)$ are
locally (A,B)-uniformly equivalent to the dyadic grids $\cD_{p_k}(I_k)$.
Thus the result is a consequence of Property~\ref{prop:interp-unigrids-multi}.
\end{remark}

\subsection{A localized Jackson estimate for the interpolation error}
To prove Proposition~\ref{propos:forQ.3} requires the following different types of estimate which are not covered by the results of the
preceding section.
Since these results will be applied to both the LGL and the dyadic tensorized grids, with various choices
of finite-dimensional function spaces (comprised of either piecewise multi-linear or global polynomial functions), we
first establish the key estimates in suitable generality in order to specialize them later to the cases at hand.

Consider again the general piecewise multilinear interpolation operator $\cI_{\cG} = \cI_{\cG}^R : C^0(R) \to V_h(\cT)$ introduced in
the statement of Property~\ref{prop:interp-unigrids-multi} above. In addition, assume that each grid $\cG_k$ is locally quasiuniform according
to \eqref{grid}. For $k=1, \dots, d$, let $W_k$ be a finite dimensional
subspace of $H^1(I_k)$ to be specified later. Then, the univariate piecewise linear interpolation operator $\cI_{\cG_k}=\cI_{\cG_k}^{I_k}$ is well-defined on $W_k$ and we have
\begin{equation}\label{stabL2interpWk}
\Vert \cI_{\cG_k} v \Vert_{0,I_k} \leq \bar{c}_k \Vert v \Vert_{0,I_k} \qquad \forall v \in W_k \;,
\end{equation}
for some constant $\bar{c}_k>0$ independent of the size $|I_k|$ but possibly depending on the dimension of $W_k$ (although this will not be the case
in all our applications). Let us set $W= \bigotimes_{k=1}^d W_k$.

Next, consider the cells $S_\bell$ forming the partition $ \cT= \bigtimes_{k=1}^d \cT_k$, i.e.,
$\cT = \{ S_\bell = \bigtimes_{k=1}^d I_{k, \ell_k} \ : \ I_{k,\ell_k} \in \cT_k \} \:,$
and let $h_\bell:=\max_k | I_{k, \ell_k}|$ be the largest one-dimensional size of the cell $S_\bell$.
Let $h=\sum_\bell h_\bell \chi_{S_\bell}$ be the meshsize function defined in $R$.

The following {\em localized} Jackson estimate will be used several times in the sequel.

\begin{proposition} \label{prop:hest} The following estimate holds
\begin{equation}\label{eq:hest}
 \Vert h^{-1}(v - \cI_{\cG} v) \Vert_{0,R} \lsim |v |_{1,R} \qquad \forall v \in W \;,
\end{equation}
where the constant implied by the inequality is independent of the meshsize but depends on the constants $\bar{c}_k$ introduced in \eqref{stabL2interpWk}.
\end{proposition}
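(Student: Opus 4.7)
The strategy is to reduce the multivariate statement to a one-dimensional Jackson estimate on individual cells by exploiting the tensor-product structure of $\cI_{\cG}$. Writing $\cI_{\cG}=A_1 A_2\cdots A_d$, where $A_k$ is the univariate interpolation operator $\cI_{\cG_k}$ applied in the $k$-th variable and the identity in all others, the classical telescoping identity
$$I - \cI_{\cG} = \sum_{k=1}^d A_1\cdots A_{k-1}(I-A_k)$$
reduces \eqref{eq:hest} to estimating each summand
$T_k:=\bigl(\bigotimes_{j<k}\cI_{\cG_j}\bigr)\otimes (I-\cI_{\cG_k})\otimes\bigl(\bigotimes_{j>k} I\bigr)$ separately.

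I would bound $\|h^{-1}T_k v\|_{0,R}$ cell by cell. On a cell $S_\ell=\bigtimes_j I_{j,\ell_j}$, Fubini applied in the $k$-th direction reduces the problem to a one-dimensional estimate: since piecewise-linear interpolation on the interval $I_{k,\ell_k}$ is determined by the two endpoint values, the standard local 1D Jackson estimate
$$\|(I-\cI_{\cG_k})w\|_{0,I_{k,\ell_k}} \lsim h_{k,\ell_k}\,|w|_{1,I_{k,\ell_k}} \qquad \forall w\in H^1(I_{k,\ell_k})$$
can be applied slicewise to $w=\bigl(\bigotimes_{j<k}\cI_{\cG_j}\bigr)v$ viewed as a function of $x_k$ with the other variables fixed. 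Since each $\cI_{\cG_j}$ for $j<k$ acts in a variable different from $x_k$, it commutes with $\partial_k$, so integrating over the remaining coordinates yields
$$\|T_k v\|_{0,S_\ell} \lsim h_{k,\ell_k}\,\bigl\|\bigl(\textstyle\bigotimes_{j<k}\cI_{\cG_j}\bigr)\partial_k v\bigr\|_{0,S_\ell}.$$

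Because $h|_{S_\ell}=\max_j h_{j,\ell_j}\geq h_{k,\ell_k}$, the factor $h_{k,\ell_k}$ is absorbed into $h^{-1}$ cell-wise. Summing over all cells of $\cT$ and invoking the tensorized $L^2$-stability of $\bigotimes_{j<k}\cI_{\cG_j}$, which follows from the hypothesis \eqref{stabL2interpWk} via Proposition~\ref{propos:tensorprop} (with norms replaced by $L^2$-norms only), gives $\|h^{-1}T_k v\|_{0,R}\lsim \|\partial_k v\|_{0,R}$. Summing in $k$ then produces \eqref{eq:hest}.

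The main technical obstacle is justifying the application of the stability \eqref{stabL2interpWk} to $\partial_k v$ rather than to $v$, since the subspaces $W_j$ are not assumed to be closed under differentiation. This is handled through the tensor-product structure $W=\bigotimes_k W_k$: for an elementary tensor $v=v_1\otimes\cdots\otimes v_d$ one has $\partial_k v = v_1\otimes\cdots\otimes v_k'\otimes\cdots\otimes v_d$, whose univariate sections in any direction $j\neq k$ are (up to a scalar) elements of $W_j$, so Proposition~\ref{propos:tensorprop} still applies to the operator $\bigotimes_{j<k}\cI_{\cG_j}$ acting on $\partial_k v$; the general case follows by linearity and Fubini. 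The local quasiuniformity of each $\cG_k$ is not actually needed in this argument; it will only play a role when the estimate is applied in later sections, where the cell-wise meshsize must be related back to the underlying LGL or dyadic grids.
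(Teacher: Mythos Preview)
Your proof is correct and follows essentially the same telescoping strategy as the paper: both write $I-\cI_{\cG}=\sum_k(\bigotimes_{j<k}\cI_{\cG_j})(I-\cI_{\cG_k})$, apply the one-dimensional Jackson estimate in the $k$-th variable, and then invoke the $L^2$-stability \eqref{stabL2interpWk} in the remaining variables. The only organizational difference lies in this last step. The paper first rescales each cell to the unit box (Lemma~\ref{lem:B}), obtaining a vertex-sum expression $\sum_{\xi\in\cF_0(A^{k-1})}\|\partial_{x_k}v(\xi,\cdot)\|^2$ which, after summing over cells, is identified with $\|\cI_{\cG_l}(\partial_{x_k}v)\|_{0,I_l}^2$ through the comparability $|A_\ell^{k-1}|\simeq\prod_l w_{l,\xi_l}$; it is precisely this comparability that uses the local quasiuniformity of the grids. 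Your direct Fubini argument bypasses the vertex formula entirely, applying the tensorized $L^2$-stability globally to $\partial_k v$ (whose slices in directions $j<k$ indeed lie in $W_j$, as you note). As you correctly observe, this renders the quasiuniformity hypothesis superfluous for the proof of \eqref{eq:hest} itself.
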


\proof The result will be obtained by assembling local estimates in each cell $S_\bell$, which in turn are derived by a scaling argument from corresponding estimates on
the unit box $B^d=[0,1]^d$, with $B=[0,1]$. To this end, let
$\cI_{B^k}$ denote the multilinear interpolation operator on $B^k$, i.e.,
\begin{equation*}
\cI_{B^k}v = \sum_{\xi\in \cF_0(B^k)} v(\xi)\Phi_\xi \;,
\end{equation*}
where $\Phi_\xi$ denotes the multilinear Lagrange basis function satisfying
$\Phi_\xi(\xi')=\delta_{\xi,\xi'}$ for $\xi,\xi'\in \cF_0(B^k)$.
We make heavy use of the fact that $\cI_{B^d}$ is a tensor product operator, i.e., we have $\cI_{B^d}= \otimes^d \cI_B$.
Moreover, it will be convenient to employ the following convention to write, for any $k=1,\ldots, d$,
\begin{equation*}
B^d= A^k \times B^{d-k}\;,
\end{equation*}
meaning that $A^k$ is the unit $k$-cube representing the first $k$ variables and $B^{d-k}$
is the unit $(d-k)$-cube for the coordinates $d-k+1,\ldots,d$, .
\begin{lemma}
\label{lem:B}
Let $\cW=\bigotimes_{k=1}^d \cW_k$, where $\cW_k$ are finite-dimensional subspaces of $H^1(B)$.
Then, one has
\begin{equation}
\label{capH1}
\| v- \cI_{B^d}v\|^2_{0,B^d}\lsim \sum_{k=1}^d\sum_{\, \xi\in \cG(A^{k-1})}\|\partial_{x_k}v(\xi,\cdot)\|^2_{0,B^{d-k+1}} \;, \qquad \forall v \in \cW \;,
\end{equation}
(with the first summand on the right-hand side to be understood as $\|\partial_{x_1}v \|^2_{0,B^{d}} $), where the constant depends only on $d$.
\end{lemma}
\proof
Denoting by $\id$ the identity operator on $B$, and writing
\begin{align*}
v - \cI_{B^d}v &= \sum_{k=1}^d \left((\cI_{B^{k-1}}\otimes {\id}^{d-k+1}) v - (\cI_{B^k}\otimes{\id}^{d-k})v \right)\\
&= \sum_{k=1}^d\big({\id}^{k-1}\otimes(\id - \cI_B)\otimes {\id}^{d-k} \big)(\cI_{B^{k-1}}\otimes {\id}^{d-k+1})v,
\end{align*}
we note that, when abbreviating $w_k:= (\cI_{B^{k-1}}\otimes {\id}^{d-k+1}v)$, 
for $x'_k:= (x_1,,\ldots,x_{k-1}, x_{k+1},\ldots,x_d)$ one has
\begin{equation*}
({\id}^{k-1}\otimes(\id - \cI_B)\otimes {\id}^{d-k} )w_k(x_1,\ldots,x_{k-1},\xi_k, x_{k+1},\ldots,x_d) =0,\quad \xi_k=0,1, \quad
x'_k \in A^{k-1}\times B^{d-k}.
\end{equation*}
Hence, we can apply the classical univariate inequality $\|v- \cI_Bv \|_{0,B}\leq 2 |v|_{1,B}$, for all $v\in H^1(B)$, to get
\begin{equation*}
\big\| \big({\id}^{k-1}\otimes(\id - \cI_B)\otimes {\id}^{d-k} \big)w_k\big\|_{0,B^d}\leq 2 \|\partial_{x_k} w_k\|_{0,B^d}.
\end{equation*}
For $k \geq 2$, notice that $w_k(\xi,\cdot)= v(\xi,\cdot)$, $\xi\in \cF_0(A^{k-1})$ and that $w_k$ is affine in the first $k-1$
variables, Hence, we conclude that
\begin{equation*}
 \|\partial_{x_k} w_k\|^2_{0,B^d} \lsim \sum_{\xi\in \cF_0(A^{k-1})}\|\partial_{x_k}v(\xi,\cdot)\|^2_{0,B^{d-k+1}},
\end{equation*}
from which the assertion of Lemma~\ref{lem:B} easily follows.
\endproof

To complete the proof of Proposition~\ref{prop:hest} consider now a generic cell $S_\bell \in \cT$, which we can write as
\begin{equation*}
S_\bell = \big(\bigtimes_{l=1}^k I_{l,\ell_l} \big)\times \big( \bigtimes_{l=k+1}^d I_{l,\ell_l} \big)=: A^k_\bell \times B_\bell^{d-k}.
\end{equation*}
Given any $v \in \cW$ and considering its restriction to $S_\bell$, we write $\hat v(\hat x)=v(x)$ with $\hat x\in B^d$, so that $ (\cI_{S_\bell}v)(x) = (\cI_{B^d}\hat v)(\hat x)$;
setting $h_{l,\ell_l}:=| I_{l,\ell_l}|$,
a standard affine change of variables yields in view of~\eqref{capH1},
\begin{equation*}\label{capH1-2}
\begin{split}
 \|v- \cI_{S_\bell}v\|^2_{0,S_\bell} &= |S_\bell | \|\hat v - \cI_{B^d}\hat v\|_{0,B^d}^2
\ \lsim \ |S_\bell | \sum_{k=1}^d \sum_{\xi\in \cF_0(A^{k-1})}\|\partial_{\hat x_k} \hat v(\xi,\cdot)\|^2_{0,B^{d-k+1}} \\
&\lsim |S_\bell | \sum_{k=1}^d \sum_{\xi\in \cF_0(A_\bell^{k-1})} \!\!\!\!\!\!\!\! |B_\bell^{d-k+1}|^{-1} h_{k,\ell_k}^2
\|\partial_{x_k}v(\xi,\cdot)\|^2_{0,B_\bell^{d-k+1}}\\
&\lsim h_\bell^2 \, \sum_{k=1}^d \sum_{\xi\in \cF_0(A_\bell^{k-1})} \!\!\!\!\! \!\!\! |A_\bell^{k-1}|
\,\|\partial_{x_k}v(\xi,\cdot)\|^2_{0,B_\bell^{d-k+1}} \;.
\end{split}
\end{equation*}
Dividing both sides by $h_\bell^2$ and summing over $\bell$ provides
\begin{equation*}
\|h^{-1}(v- \cI_{\cG}v)\|^2_{0,R}\lsim \sum_{k=1}^d \sum_{S_\bell \in \cT} \sum_{\xi\in \cF_0(A_\bell^{k-1})} |A_\bell^{k-1}|
\,\|\partial_{x_k}v(\xi,\cdot)\|^2_{0,B_\bell^{d-k+1}} \;.
\end{equation*}
Now, $|A_\bell^{k-1}| = \prod_{l=1}^{k-1} h_{l,\ell_l} \simeq \prod_{l=1}^{k-1} w_{l,\xi_l}$ for each $\xi\in \cF_0(A_\bell^{k-1})$, where the weights $w_{l,\xi_l}$ are defined by the conditions
$ \sum_{\xi_l \in \cG_l} v^2(\xi_l) w_{l,\xi_l} = \Vert \cI_{\cG_l} v \Vert_{0,I_l}^2$ for all $v \in C^0(I_l)$.
Then, the assertion follows from \eqref{stabL2interpWk}.

\subsection{Proof of Proposition~\ref{propos:forQ.3}}

Again, we treat only the first relation. The second one is analogous.
For each $R \in \cR$ and each $k=1, \dots, d$, let us recall the definitions \eqref{eq:defb.b0.b1}, \eqref{eq:def_b_RkSl_0} and \eqref{eq:def_b_RkSl_1}
of the form $b_{R,k}(u,v)$ and its portions $b_{R,k}^{(0)}(u,v)$ and $b_{R,k}^{(1)}(u,v)$. Concerning the former portion, observe that
\begin{equation} \label{bRk0}
\begin{split}
b_{R,k}^{(0)}(v,v)&= \sum_{S_\bell\in\cT_{\bp,k}^{(0)}(R)}
 \sum_{\xi' \in \cG(S_{\bell,k}')} \omega_{\bell,k}' \int_{I_{k,\ell_k}}
\big |\partial_{x_{k}} \cI^R_{h,\bp}v(\xi',x_k) \big|^2 \, dx_k \\
&\simeq \sum_{S_\bell\in\cT_{\bp,k}^{(0)}(R)} \int_{S_{\bell}}
\big |\partial_{x_{k}} \cI^R_{h,\bp}v(x) \big|^2 \, dx \leq \sum_{S_\bell\in\cT_\bp (R)} \int_{S_{\bell}}
\big |\partial_{x_{k}} \cI^R_{h,\bp}v(x) \big|^2 \, dx \leq |\cI^R_{h,\bp}v|^2_{1,R}\;,
\end{split}
\end{equation}
where we have used the uniform equivalence of the weights $\omega_{\bell,k}'$ (see \eqref{eq:def.omegaprime})
with the integration weights for multi-linear functions on the cell $S_{\bell,k}'$.
Thus, for all $\tilde{v}_R \in V_{h,D,\bp}(R)$, we have
\begin{equation}\label{eq:boundJ.b0k}
\begin{split}
b_{R,k}^{(0)}(\tilde{v}_R - Q_R \tilde{v}_R, \tilde{v}_R - Q_R \tilde{v}_R) &\lsim |\cI^R_{h,\bp}(\tilde{v}_R - Q_R \tilde{v}_R)|^2_{1,R} \\
&\lsim |\cI^R_{h,\bp}(\tilde{v}_R)|^2_{1,R} + |\cI^R_{h,\bp}(Q_R \tilde{v}_R)|^2_{1,R} \lsim | \tilde{v}_R|^2_{1,R} \;,
\end{split}
\end{equation}
where the last bound follows immediately from Lemmas~\ref{lemma:interp-unigrids} and~\ref{prop:interp-unigrids-multi},
Property~\ref{lemma:forQ.3} and Proposition~\ref{propos:forQ.1}.

Consider now the portion $b_{R,k}^{(1)}(u,v)$. Arguing as above, one has
\begin{equation*} \label{bRk1}
b_{R,k}^{(1)}(v,v)
=\! \sum_{S_\bell\in\cT_{\bp,k}^{(1)}(R)}
 \sum_{\xi' \in \cG(S_{\bell,k}')} \omega_{\bell,k}' \sum_{\xi \in \cG(I_{k,\ell_k})}
h_{k,\ell_k}^{-1} \, |\cI^R_{h,\bp}v(\xi',\xi)|^2
\simeq\! \sum_{S_\bell\in\cT_{\bp,k}^{(1)}(R)} \int_{S_\bell} h_{k,\ell_k}^{-2} \, |\cI^R_{h,\bp}v(x)|^2 \, dx \;.
\end{equation*}
Now observe that, by definition of $\cT_{\bp,k}^{(1)}(R)$ (see \eqref{aspect}), one has
$h_{k,\ell_k} \geq \max(1, C_{\textnormal{aspect}}^{-1}) \, h_\bell$, with $h_\bell=\max_l h_{l,\ell_l}$. Hence,
\begin{equation*}
\begin{split}
b_{R,k}^{(1)}(v,v) &\lsim \sum_{S_\bell\in\cT_{\bp,k}^{(1)}(R)} \int_{S_\bell} h_\bell^{-2} \, |\cI^R_{h,\bp}v(x)|^2 \, dx \leq
\sum_{S_\bell\in\cT_\bp (R)} \int_{S_\bell} h_\bell^{-2} \, |\cI^R_{h,\bp}v(x)|^2 \, dx \\[5pt]
& \ = \Vert h^{-1} \cI^R_{h,\bp}v \Vert_{0,R}^2 \lsim \Vert h^{-1}(v- \cI^R_{h,\bp}v) \Vert_{0,R}^2 + \Vert h^{-1} v \Vert_{0,R}^2 \;,
\end{split}
\end{equation*}
where $h=\sum_\bell h_\bell \chi_{S_\bell}$ is the LGL meshsize function in $R$. If $\tilde{v}_R \in V_{h,D,\bp}(R)$, this yields
\begin{equation}\label{eq:boundJ.b1k.0}
\begin{split}
b_{R,k}^{(1)}(\tilde{v}_R - Q_R \tilde{v}_R, \tilde{v}_R - Q_R \tilde{v}_R) & \lsim \Vert h^{-1}(\tilde{v}_R- \cI^R_{h,\bp}\tilde{v}_R) \Vert_{0,R}^2 +
\Vert h^{-1}(Q_R \tilde{v}_R- \cI^R_{h,\bp}(Q_R \tilde{v}_R)) \Vert_{0,R}^2 \\
&\quad + \Vert h^{-1} (\tilde{v}_R - Q_R \tilde{v}_R) \Vert_{0,R}^2 \;.
\end{split}
\end{equation}
Now we invoke Proposition~\ref{prop:hest}, with different choices of the grid $\cG$ and the space $W$, to bound each of the three summands on the right-hand side.
For the first summand, we have $\cG=\cG_\bp(R)$ (the LGL grid of order $\bp$ in $R$) and $W=V_{h,D,\bp}(R)$.
We note that, due to Lemma~\ref{lemma:interp-unigrids}, the bounds \eqref{stabL2interpWk}
are satisfied with $\bar{c}_k \lsim 1$.
Thus we get
\begin{equation}\label{eq:boundJ.b1k.01}
\Vert h^{-1}(\tilde{v}_R- \cI^R_{h,\bp}\tilde{v}_R) \Vert_{0,R}^2 \lsim |\tilde{v}_R |_{1,R}^2 \;.
\end{equation}
For the second summand, we recall that $Q_R \tilde{v}_R \in \mathbb{Q}_{\bp}(R)$, so that we have again $\cG=\cG_\bp(R)$, whereas now $W=\mathbb{Q}_{\bp}(R)$.
The bounds \eqref{stabL2interpWk} are now satisfied with $\bar{c}_k \lsim 1$ because of Lemma~\ref{lemma:forQ.3}.
Thus, recalling \eqref{eq:forQ.51bis}, we obtain
\begin{equation}\label{eq:boundJ.b1k.02}
\Vert h^{-1}(Q_R \tilde{v}_R- \cI^R_{h,\bp}(Q_R \tilde{v}_R)) \Vert_{0,R}^2 \lsim |Q_R \tilde{v}_R |_{1,R}^2 \lsim |\tilde{v}_R |_{1,R}^2 \;.
\end{equation}
At last, we bound the third summand on the right-hand side of \eqref{eq:boundJ.b1k.0}. To this end,
recalling the definition \eqref{eq:def.vstarR} of $Q_R \tilde v_R$, noting that
$\tilde v_R = \sum_{z\in \cF_0(R)} \Phi_z \tilde v_R$, and defining $\tilde v_z := \Phi_z \tilde v_R$,
it suffices to bound the quantity $C_z^2 :=\Vert h^{-1} (\tilde v_z - \cI^R_{\bp^*_z}(\cI^R_{h,D,\bp_z^*} \tilde v_z)) \Vert_{0,R}^2$ for each $z \in \cF_0(R)$.
Writing
\begin{equation*}
 \tilde v_z - \cI^R_{\bp^*_z}(\cI^R_{h,D,\bp_z^*} \tilde v_z) = (\tilde v_z - \cI^R_{h,D,\bp_z^*} \tilde v_z) +
(\cI^R_{h,D,\bp_z^*} \tilde v_z - \cI^R_{\bp^*_z}(\cI^R_{h,D,\bp_z^*} \tilde v_z)) ,
\end{equation*}
and setting for simplicity $\tilde w_z := \cI^R_{h,D,\bp_z^*} \tilde v_z \in V_{h,D,\bp^*}(R)$, we thus have
\begin{equation}\label{eq:boundJ.b1k.03}
C_z^2
\lsim \Vert h^{-1} (\tilde v_z - \cI^R_{h,D,\bp_z^*} \tilde v_z)) \Vert_{0,R}^2 +
 \Vert h^{-1} (\tilde w_z - \cI^R_{\bp^*_z} \tilde w_z) \Vert_{0,R}^2 \;.
\end{equation}
We now proceed as in the proof of Proposition~\ref{propos:forQ.1}, i.e., we work on the reference element $\hat{R}$ viewed as
an affine image of $R$. This simplifies handling the factor
$\Phi_z$ when eventually bounding the $H^1$-seminorm of $\tilde v_z=\Phi_z \tilde v_R$ by that of $\tilde v_R$ on the element $R$.

We want to apply Proposition~\ref{prop:hest} to the first summand on the right-hand side of \eqref{eq:boundJ.b1k.03}, with $\cG=\cD_{\bp^*}(\hat R)$
(the dyadic grid of order $\bp^*$ in $\hat R$) and $W=\Phi_z V_{h,D,\bp^*}(\hat R)$. To this end, we observe that the meshsize function $h=h_\bp$ associated with the grid
$\cG_\bp(\hat R)$, as a consequence of Corollary~\ref{cor:dyadic-uniequiv}, is uniformly comparable to the meshsize function $h_{D,\bp^*}$ associated with the grid $\cD_{\bp^*}(\hat R)$,
i.e., $h_\bp \simeq h_{D,\bp^*}$ pointwise in $\hat R$.
On the other hand, the bounds \eqref{stabL2interpWk} are satisfied with $\bar{c}_k \lsim 1$; this easily follows from
the fact that the restriction of $\tilde v_z$ to any cell of the grid $\cD_{\bp^*}(\hat R)$ is a piecewise multi-quadratic function belonging to a finite dimensional space whose dimension
is bounded independently of $\bp$. Thus, we obtain
\begin{equation}\label{eq:boundJ.b1k.04}
\Vert h^{-1} (\tilde v_z - \cI^{\hat R}_{h,D,\bp_z^*} \tilde v_z)) \Vert_{0,\hat R}^2 \lsim |\tilde v_z |_{1,\hat R}^2 \ \lsim \Vert \tilde v_{\hat R} \Vert _{1, \hat R}^2 \;.
\end{equation}
The second summand on the right-hand side of \eqref{eq:boundJ.b1k.03} can be bounded with the aid of
Proposition~\ref{prop:hest}
as well. Indeed, using the property that
$\cI_{h,q}(\cI_q v) = \cI_{h,q}v$ if $\cI_{h,q}v$ and $\cI_q v$ are the low- and high-order interpolants of a continuous function on the same grid, one has
\begin{equation*}
v - \cI_q v = v - \cI_{h,q}(\cI_q v) + \cI_{h,q}(\cI_q v) - \cI_q v = (v - \cI_{h,q} v) - ( \cI_q v - \cI_{h,q}(\cI_q v)) \;.
\end{equation*}
In our situation, this yields with $\tilde u_z = \cI^{\hat R}_{\bp^*_z} \tilde w_z \in \mathbb{Q}_{\bp^*}(\hat R)$
\begin{equation*}
 \Vert h^{-1} (\tilde w_z - \cI^R_{\bp^*_z} \tilde w_z) \Vert_{0,\hat R}^2 \lsim \Vert h^{-1} ( \tilde w_z - \cI^{\hat R}_{h, \bp^*_z} \tilde w_z ) \Vert_{0,\hat R}^2
 + \Vert h^{-1} (\tilde u_z - \cI^{\hat R}_{h, \bp^*_z} \tilde u_z ) \Vert_{0,\hat R}^2 \;.
\end{equation*}
so that we can apply Proposition~\ref{prop:hest} with $\cG= \cG_{\bp^*}(\hat R)$ and either $W=V_{h,D,\bp}(\hat R)$ or $W=\mathbb{Q}_{\bp^*}(\hat R)$. Again, the function
$h_\bp$ is uniformly comparable to the function $h_{\bp^*}$ associated with the grid $\cG_{\bp^*}(\hat R)$, and one easily checks that the bounds \eqref{stabL2interpWk} are
satisfied with $\bar{c}_k \lsim 1$ with both choices of $W$. Thus,
\begin{equation}\label{eq:boundJ.b1k.05}
 \Vert h^{-1} (\tilde w_z - \cI^{\hat R}_{\bp^*_z} \tilde w_z) \Vert_{0,\hat R}^2 \lsim |\tilde w_z |_{1,\hat R}^2 \lsim |\tilde v_z |_{1,\hat R}^2 \ \lsim \Vert \tilde v_{\hat R} \Vert _{1, \hat R}^2 \;,
\end{equation}
where the second inequality follows from Lemma~\ref{lemma:interp-unigrids} and Property~\ref{prop:interp-unigrids-multi}.
Going back to the element $R$, the bounds \eqref{eq:boundJ.b1k.03}, \eqref{eq:boundJ.b1k.04} and \eqref{eq:boundJ.b1k.05} imply
$
C_z^2 \lsim |\tilde{v}_R |_{1,R}^2 ,
\ \forall z \in \cF_0(R)$, which yields $ \Vert h^{-1} (\tilde{v}_R - Q_R \tilde{v}_R) \Vert_{0,R}^2 \lsim |\tilde{v}_R |_{1,R}^2$.
This, together with \eqref{eq:boundJ.b1k.01} and \eqref{eq:boundJ.b1k.02}, allows us to obtain
$
b_{R,k}^{(1)}(\tilde{v}_R - Q_R \tilde{v}_R, \tilde{v}_R - Q_R \tilde{v}_R) \lsim |\tilde{v}_R |_{1,R}^2
$
from \eqref{eq:boundJ.b1k.0}. Thus, the proof of Proposition~\ref{propos:forQ.3} is complete.
\endproof

\begin{remark}\label{rem:Qapprox}
Consider the operator $Q: \tilde V= W_{h,D,\bp} \to V= V_{h,D,\bp}$ and the auxiliary bilinear form $b_3$ introduced in Section~\ref{sect:aux-multilevel}, where condition \textbf{ASM 3} is claimed to hold in \eqref{eq:Q3approx} of Lemma~\ref{lem:Q}. Since both objects are defined in complete analogy to \eqref{def:operQ}, the proof of this property is
similar to that of Proposition~\ref{propos:forQ.3} given above.
\end{remark}


\bibliographystyle{plain}
\bibliography{biblio}


\end{document}